\newtheorem{theorem}{Theorem}[section]
\newtheorem{proposition}[theorem]{Proposition}
\newtheorem{lemma}[theorem]{Lemma}
\newtheorem{remark}[theorem]{Remark}
\newtheorem{defi}[theorem]{Definition}
\newtheorem{claim}[theorem]{Claim}
\newcommand{\scal}[2]{\left\langle #1,#2 \right\rangle}
\newcommand{\g}{\nabla}
\newcommand{\lap}{\Delta}
\newcommand{\dr}{\partial}
\newcommand{\vol}{\mathrm{vol}}
\newcommand{\tr}{\mathrm{tr}}
\newcommand{\II}{\mathrm{I\!I}}
\newcommand{\Riem}{\mathrm{Riem}}
\newcommand{\Ric}{\mathrm{Ric}}
\newcommand{\proj}{\mathrm{proj}}
\newcommand{\Hess}{\mathrm{Hess}}
\newcommand{\Id}{\mathrm{Id}}
\newcommand{\loc}{\mathrm{loc}}
\newcommand{\Diff}{\mathrm{Diff}}
\newcommand{\BiLip}{\mathrm{Bi\text{-}Lip}}
\newcommand{\Imm}{\mathrm{Imm}}
\newcommand{\R}{\mathbb{R}}
\newcommand{\N}{\mathbb{N}}
\newcommand{\s}{\mathbb{S}}
\newcommand{\B}{\mathbb{B}}
\newcommand{\Hr}{\mathcal{H}}
\newcommand{\Er}{\mathcal{E}}
\newcommand{\Ur}{\mathcal{U}}
\newcommand{\Vr}{\mathcal{V}}
\newcommand{\I}{\mathcal{I}}
\newcommand{\Pc}{\mathscr{P}}
\newcommand{\vp}{\varphi}
\newcommand{\eps}{\varepsilon}
\newcommand{\geu}{g_{\mathrm{eucl}}}
\newcommand{\vu}{\vec{u}}
\newcommand{\vq}{\vec{q}}
\newcommand{\vn}{\vec{n}}
\newcommand{\vtau}{\vec{\tau}}
\newcommand{\vPhi}{\vec{\Phi}}
\newcommand{\vPsi}{\vec{\Psi}}
\newcommand{\vphi}{\vec{\phi}}
\newcommand{\vpsi}{\vec{\psi}}
\newcommand{\vII}{\vec{\II}}
\newcommand{\vH}{\vec{H}}
\newcommand{\vw}{\vec{w}}
\newcommand{\vtheta}{\vec{\theta}}
\title{Construction of harmonic coordinates for weak immersions}
\date{\today}
\author[1]{Dorian Martino}
\author[2]{Tristan Rivière}
\affil[1]{Department of Mathematics, ETH Zürich, 101 Rämistrasse, 8092 Zürich, Switzerland. Email: dorian.martino@math.ethz.ch}
\affil[2]{Department of Mathematics, ETH Zürich, 101 Rämistrasse, 8092 Zürich, Switzerland. Email: tristan.riviere@math.ethz.ch}
\begin{document}
	
	\maketitle
	
	\begin{abstract}
		We prove that any weak immersion in the critical Sobolev space $W^{\frac{n}{2}+1,2}(\R^n;\R^d)$ in even dimension $n\geq 4$, has global harmonic coordinates if its second fundamental form is small in the Sobolev space $W^{\frac{n}{2}-1,2}(\R^n;\R^d)$. This is a generalization to arbitrary even dimension $n\ge 4$ of a famous result of Müller--Sverak \cite{muller1995} for $n=2$. The existence of such coordinates is a key tool used by the authors in \cite{MarRiv20252} for the analysis of scale-invariant Lagrangians of immersions, such as the Graham--Reichert functional. From a purely intrinsic perspective, the proof of the main result leads to a general local existence theorem of harmonic coordinates for general metrics with Riemann tensor in $L^p$ for any $p>n/2$ in any dimension $n\geq 3$.
	\end{abstract}

	
	\section{Introduction}

	The study of analytic problems surrounding immersions (such as regularity or compactness of minimal or CMC surfaces) are fundamental questions that first require the construction of local coordinates. Those coordinates should be controlled in a manner that allows the use of elliptic regularity. For instance, if $\vPhi\colon \B^n\to\R^d$ is an immersion with induced metric $g_{\vPhi}$, then its mean curvature vector $\vH_{\vPhi}$ satisfies $n\, \vH_{\vPhi} = \lap_{g_{\vPhi}}\vPhi$. If the immersion is minimal, or if $\vH_{\vPhi}$ is assumed to be regular, one can not directly deduce from this system that $\vPhi$ is regular. Indeed, the Laplace--Beltrami operator $\lap_{g_{\vPhi}}$ depends on the immersion $\vPhi$ and consequently, is a degenerate operator. In order to be able to use elliptic regularity, we need to now a priori that the metric $g_{\vPhi}$ is well-behaved. That is to say, we need coordinates in which the coefficients $\big( g_{\vPhi}\big)_{ij}$ are uniformly elliptic and at least uniformly continuous. \\
	
	In dimension $n=2$, such coordinates are provided by the isothermal coordinates. Indeed, in these coordinates, the regularity boils down to the regularity of the conformal factor. Thanks to the Liouville equation, it suffices to know that the Gauss curvature is regular to deduce regularity for the conformal factor. For general metrics, Gauss proved in 1822 that any surface equipped with a real-analytic metric admits local isothermal coordinates. In 1916, Lichtenstein \cite{lichtenstein1916} extended Gauss's result by proving the existence of isothermal coordinates for metrics with Hölder continuity. Subsequent foundational works by Ahlfors, Bers, Chern, Hartman--Wintner, and Vekua \cite{ahlfors1955,chern1955,vekua1955,hartman1955} solidified the analytic framework, showing that such coordinates can be constructed even under the regularity of Dini continuity. However, in most of the variational problems concerning immersions (such as minimal surfaces, CMC surfaces or Willmore surfaces), it is not clear that the metric can be assumed a priori continuous. It has been proved by Toro \cite{toro1994} in 1994 that surfaces with second fundamental form in $L^2$ have a Lipschitz parametrization. In 1995, Müller--Sverak \cite{muller1995} proved that conformal parametrization of surfaces with second fundamental form in $L^2$ are Lipschitz parametrizations. In \cite{helein2002}, Hélein proved that isothermal coordinates can be obtained through the use of Coulomb frames. Together with Wente inequality, he proved that for surfaces with $L^2$ second fundamental form, there always exist isothermal coordinates in which the metric is continuous. We refer to \cite{Lan2025} for a recent survey of these results.
    \\
	
	In dimensions $n\geq 3$, it is well-known that isothermal coordinates do not exist for general metrics, the Weyl tensor being the obstruction. However, one can still wonder which immersed submanifolds carries a bi-Lipschitz parametrization. In order to study the Plateau problem, Reifenberg \cite{reifenberg1960} introduced a condition of approximability of submanifolds by affine planes, now known as the Reifenberg condition. Inspired by this condition, Semmes \cite{semmes1,semmes2,semmes3} proved in 1991 that hypersurfaces with a Gauss map being small in $BMO$ carry bi-Hölder parametrizations. In 1995, Toro \cite{toro1995} partially answered the question and proved that a submanifolds satisfying a refinement of the Reifenberg condition carries Lipschitz parametrizations. However, this condition requires to know a priori a certain decay of the density, which does not seem to be an easy condition to check for potential applications in variational problems.\\
	
	Instead of looking for Reifenberg-type conditions, one could look for assumptions on the second fundamental form, as it is done in dimension $n=2$. From an intrinsic view-point, we consider the induced metric $g_{\vPhi}$ of an immersion $\vPhi\colon \Sigma^n\to \R^d$, together with its Riemann tensor $\Riem^{g_{\vPhi}}$ given by the second fundamental form $\vII$ through the Gauss--Codazzi formula in local coordinates
	\begin{align}\label{eq:GC}
		\Riem^{g_{\vPhi}}_{ijkl} = \vII_{ik}\cdot \vII_{jl} - \vII_{il}\cdot \vII_{jk}.
	\end{align}
	Thus, a regularity condition on the second fundamental form can be translated directly to the Riemann tensor of $g_{\vPhi}$ \emph{without} the use of $g_{\vPhi}$ itself. This idea finds applications in the context of generalized Willmore energies. These functionals have been first introduced by Guven \cite{guven205} in 2005 who was looking for conformally invariant energies measuring the bending of 4-dimensional submanifolds of $\R^d$. In the context of the AdS/CFT correspondence, a procedure of volume renormalization first introduced by Henningson--Skenderis \cite{henningson1998} to compute the Weyl anomaly and then developed by Graham--Witten \cite{graham1999} to compute the renormalized volume of minimal submanifolds leads to the Willmore functional for hypersurfaces of $\R^d$. This computation has been generalized for 4-dimensional submanifolds of $\R^d$ independently by Graham--Reichert \cite{graham2020} and Zhang \cite{zhang2021} who recovered the same functional as Guven: given an immersion $\vPhi\colon \Sigma^4\to \R^5$,
	\begin{align}\label{eq:Willmore4d}
		\Er_{GR}(\vPhi) = \int_{\Sigma
        ^4} |\g \vII_{\vPhi}|^2_{g_{\vPhi}} - H^2_{\vPhi}\, |\vII_{\vPhi}|^2_{g_{\vPhi}} + 7\, H_{\vPhi}^4\ d\vol_{g_{\vPhi}}.
	\end{align}
	On the other hand, a notion of generalized Willmore energy in any (even) dimension has been introduced by Blitz, Gover and Waldron in \cite{blitz2024,gover2017,gover2020}, where they define a generalized Willmore energy as a conformally invariant functional for immersions $\vPhi\colon \Sigma^n\to \R^d$ whose Euler--Lagrange equation contains as the highest order term $\lap_{g_{\vPhi}}^{\frac{n}{2}} H_{\vPhi}$. This term arises in the Euler--Lagrange equation of every functional of the form 
	\begin{align*}
		\int_{\Sigma^n} \big| \g^{\frac{n-2}{2}} \vII_{\vPhi} \big|^2_{g_{\vPhi}} \ d\vol_{g_{\vPhi}} + l.o.t.
	\end{align*}
	Hence, the starting point for the analytical setting of such functionals is the assumption $\vII_{\vPhi} \in W^{\frac{n-2}{2},2}(\Sigma^n)$. Since the quantity $\|\vII_{\vPhi}\|_{W^{\frac{n}{2}-1,2}(\Sigma,g_{\vPhi})}$ is not scale invariant, we will rather work with the following functional for $n$ even
	\begin{align}\label{eq:prototype}
		\Er_n(\vPhi)\coloneq \sum_{i=0}^{\frac{n}{2}-1} \int_{\Sigma^n} \big| \g^i \vII_{\vPhi} \big|_{g_{\vPhi}}^{\frac{n}{i+1}}\ d\vol_{g_{\vPhi}} .
	\end{align}
	For instance, we have in low dimensions
	\begin{align*}
		& \Er_2(\vPhi) = \int_{\Sigma^2} \big| \vII_{\vPhi} \big|_{g_{\vPhi}}^2\ d\vol_{g_{\vPhi}}, \\[2mm]
		& \Er_4(\vPhi) = \int_{\Sigma^4} \big| \g \vII_{\vPhi} \big|^2_{g_{\vPhi}} + \big| \vII_{\vPhi} \big|^4_{g_{\vPhi}}\ d\vol_{g_{\vPhi}}, \\[2mm]
		& \Er_6(\vPhi) = \int_{\Sigma^6} \big| \g^2 \vII_{\vPhi} \big|^2_{g_{\vPhi}} + \big|\g \vII_{\vPhi} \big|^3_{g_{\vPhi}} + \big| \vII_{\vPhi} \big|^6_{g_{\vPhi}} \ d\vol_{g_{\vPhi}}.
	\end{align*}
	These functionals are all scale-invariant and provide an upper bound for any generalized Willmore energy. However, the setting of $C^{\infty}$ immersions is not adapted to the study of such problems since $\Er_n$ does not provide any constraint on the $L^{\infty}$ norm of any derivatives of $\vPhi$. Hence, the set of immersions has to be enlarged using a Sobolev version of immersions. To that extent, the notion of weak immersions in dimension 2 has been introduced by the second author in \cite{riviere2014}. We have the following definition.
	\begin{defi}
		Let $(\Sigma^n,h)$ be a closed oriented $n$-dimensional Riemannian manifold and $d>n$ be an integer. Given $k\in\N$ and $p\in[1,+\infty]$, we define the space of weak immersion $\I_{k,p}(\Sigma^n;\R^d)$ as follows:
		\begin{align*}
			\I_{k,p}(\Sigma^n;\R^d)\coloneqq \left\{
			\vPhi\in W^{k+2,p}(\Sigma;\R^d) : \exists c_{\vPhi} >1,\ c_{\vPhi}^{-1} h \leq g_{\vPhi} \leq c_{\vPhi}\, h
			\right\}.
		\end{align*}     
	\end{defi}

    Some comments are in order to understand this definition. The induced metric $g_{\vPhi} \coloneq \vPhi^*\geu$ is defined for any map $\vPhi\colon \Sigma^n\to \R^d$ for which the first derivatives lie in $L^2_{\loc}(\Sigma)$ using local coordinates: $\big(g_{\vPhi}\big)_{ij} = \dr_i \vPhi\cdot\dr_j \vPhi$. The existence of a constant $c_{\vPhi} >1$ such that $c_{\vPhi}^{-1} h \leq g_{\vPhi} \leq c_{\vPhi}\, h$ holds a.e. is the crucial information allowing to manipulate $\vPhi$ in a similar manner as for standard immersions. First, the upper bound $g_{\vPhi} \leq c_{\vPhi}\, h$ implies that $\vPhi$ is Lipschitz. Indeed, we have in local coordinates $(g_{\vPhi})_{ii} = |\dr_i \vPhi|^2$. However, we do not impose any restriction on $c_{\vPhi}$, meaning that we allow weak immersions to have a constant $c_{\vPhi}$ arbitrarily large. This is the key point that allows for singularities to appear in the study of compactness questions. Moreover, if $\vPhi\in W^{k+2,p}\cap W^{1,\infty}(\Sigma^n;\R^d)$, then the coefficients $\big(g_{\vPhi}\big)_{ij}$ are of class $W^{k+1,p}\cap L^{\infty}$ and in particular, not necessarily continuous, depending on the parameters $k$ and $p$. On the other hand, the lower bound $c_{\vPhi}^{-1} h < g_{\vPhi}$ implies that the partial derivatives of $\vPhi$ are linearly independent. Hence we can define the generalized Gauss map of $\vPhi$ in local coordinates as the multivector
    \begin{align*}
        \vn_{\vPhi} \coloneq *_{\R^d} \frac{\dr_1\vPhi \wedge \cdots \wedge \dr_n \vPhi}{ \left| \dr_1\vPhi \wedge \cdots \wedge \dr_n \vPhi \right| }.
    \end{align*}
    The two conditions $c_{\vPhi}^{-1} h < g_{\vPhi}$ and $\vPhi\in W^{k+2,p}(\Sigma^n;\R^d)$ imply that $\vn_{\vPhi}$ lies in $W^{k+1,p}$. Consequently, the second fundamental form of $\vPhi$ is a well-defined tensor with components in $W^{k,p}$ (this is the reason for the indices $k,p$ in the definition of $\I_{k,p}(\Sigma^n;\R^d)$). Hence, the space $\I_{k,p}(\Sigma^n;\R^d)$ can be roughly understood as a Sobolev version of the classical immersions, this time with second fundamental form merely in $W^{k,p}(\Sigma^n)$.\\
    
	In the two-dimensional case, if isolated singularities (branch points) of the induced metric are allowed in the definition of the space $\I_{0,2}(\Sigma^2;\R^d)$, then we obtain the closure of $\Imm(\Sigma^2;\R^d)$ under a bound on the $L^2$ norm of the second fundamental form by gluing such branched weak immersions in a bubble tree, as proved by Mondino and the second author in \cite{mondino2014}. 
    For such weak immersions, we recover the standard tools of the classical immersions, such as isothermal coordinates and Gauss--Bonnet formula (see for instance the lecture notes \cite{riviere2016} \textcolor{red}{and the recent survey \cite{Lan2025}}), but in a setting of very low regularity. The crucial tools required to develop this theory have been the study of Coulomb frame introduced by Hélein \cite{helein2002}, together with a remarkable div-curl structure in the Liouville equation, which turns out to be the analytic ingredient neeeded the obtain the continuity of the conformal factor. However, conformally flat coordinates do not exist on general manifolds in higher dimensions and thus, finding coordinates for which the metric is continuous is not straightforward.\\

    In high dimensions, the harmonic coordinates introduced by Einstein \cite{einstein1916} and Lanczos \cite{lanczos1922} are an alternative to conformal coordinates providing the maximal regularity for the coefficients of the metric depending on the regularity of its curvature. This is exactly the case for isothermal coordinates in dimension $n=2$. The goal of the present work is to construct harmonic coordinates for elements of $\I_{\frac{n}{2}-1,2}(\R^n;\R^d)$, that the authors apply in the coming work \cite{MarRiv20252} to study the analysis of functionals such as \eqref{eq:Willmore4d} and \eqref{eq:prototype}. 
    Our main result is the following.
    
    \begin{theorem}\label{th:Imm}
		Let $n\geq 2$ be an even integer and $d>n$ be an integer. There exists $\eps_*>0$ and $C_*>0$ depending only on $n$ and $d$ such that the following holds.  Let $\vPhi\in \I_{\frac{n}{2}-1,2}(\R^n;\R^d)$ be such that 
		\begin{align*}
			\Er_n(\vPhi) < \eps_*.
		\end{align*}
		Then, there exist a bi-Lipschitz homeomorphism $z\colon \R^n\to \R^n$ providing harmonic coordinates for $g_{\vPhi}$ on $\R^n$ such that for all $1\leq i,j\leq n$, it holds $\big(g_{\vPhi\circ z}\big)_{ij}\in C^0(\R^n)$ with the estimate
		\begin{align*}
			\left\| \big(g_{\vPhi\circ z}\big)_{ij} - \delta_{ij} \right\|_{L^{\infty}(\R^n)} \leq C_*\, \Er_n(\vPhi).
		\end{align*}
	\end{theorem}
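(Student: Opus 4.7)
The plan is to reduce the problem to a purely intrinsic question and then proceed by a perturbation argument. The first step is to apply the Gauss--Codazzi equation \eqref{eq:GC}, which expresses the Riemann tensor of $g_{\vPhi}$ as a bilinear combination of $\vII_{\vPhi}$. Combined with the Sobolev embeddings available from the definition of $\I_{\frac{n}{2}-1,2}(\R^n;\R^d)$, the smallness of $\Er_n(\vPhi)$ translates scale-invariantly into smallness of $\Riem^{g_{\vPhi}}$ in a range of critical Sobolev norms (typically $L^{n/2}$, together with higher-order analogues up to $W^{\frac{n}{2}-2,2}$). After this reduction, the remaining task is intrinsic: construct global harmonic coordinates for a metric on $\R^n$ whose Riemann tensor is small in such norms.

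I would handle this intrinsic task in two steps. The first one --- which I expect to be the main obstacle --- is to produce an initial bi-Lipschitz chart in which $g_{\vPhi}$ is uniformly close to $\delta_{ij}$ in $L^{\infty}(\R^n)$. The difficulty is that one must pass from smallness of a second-order quantity (the curvature, equivalently $\vII$) to smallness of a zeroth-order quantity (the metric itself), globally on the non-compact space $\R^n$. The natural tool, inspired by H\'elein's construction \cite{helein2002}, is a global Coulomb/harmonic moving frame for $T\vPhi$: after normalizing $d\vPhi$ at one point via an ambient rigid motion of $\R^d$, one seeks an orthonormal tangent frame $(\ve_1,\dots,\ve_n)$ whose connection form is small in the critical Sobolev norm, obtained through a div-curl/Wente-type estimate exploiting the smallness of the curvatures of the tangent and normal bundles. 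The integrated dual coframe, combined with the ambient normalization, yields a bi-Lipschitz reparametrization after which the pulled-back metric satisfies $\|g-\delta\|_{L^{\infty}(\R^n)}\lesssim\Er_n(\vPhi)$.

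Once $g$ is close to the Euclidean metric, the global harmonic coordinate map $u\colon\R^n\to\R^n$ is sought in the perturbative form $u=\Id+w$, and the equation $\lap_{g}u^i=0$ is rewritten as
\begin{align*}
\lap w^i = -\lap_{g} x^i - \big(\lap_{g}-\lap\big)w^i.
\end{align*}
The right-hand side is small thanks to $\|g-\delta\|_{L^{\infty}}\ll 1$, so a contraction mapping in a subcritical space such as $\dot W^{2,p}(\R^n)$ for $p>n/2$ produces a unique small solution $w$, and $z\coloneqq u^{-1}$ is the desired bi-Lipschitz homeomorphism. The continuity statement and the quantitative $L^{\infty}$ bound then follow from the Einstein-type semilinear elliptic system satisfied by the metric coefficients in harmonic coordinates,
\begin{align*}
-\tfrac{1}{2}g^{kl}\dr_k\dr_l\, g_{ij}+Q_{ij}(g,\dr g) = \Ric^{g}_{ij},
\end{align*}
whose right-hand side is small by Gauss--Codazzi and by the previous step. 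Elliptic $L^p$ regularity together with the embedding $W^{2,p}\hookrightarrow C^0$ for $p>n/2$ then yield $(g_{\vPhi\circ z})_{ij}\in C^0(\R^n)$ and the estimate $\|(g_{\vPhi\circ z})_{ij}-\delta_{ij}\|_{L^{\infty}}\leq C_*\,\Er_n(\vPhi)$.
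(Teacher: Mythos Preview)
Your overall architecture is close to the paper's: reduce via Gauss--Codazzi to an intrinsic problem, build a Coulomb-type frame, integrate it to approximate coordinates, then perturb to harmonic coordinates and read off regularity from \eqref{eq:Ricci}. The paper indeed proceeds this way, using Uhlenbeck's gauge \cite{uhlenbeck1982} for the intrinsic coframe and the system \eqref{eq:system_y} to produce the approximate chart, followed by the perturbation \eqref{eq:perturb}.

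However, there is a genuine gap in the function spaces you invoke. You propose to run the contraction mapping in $\dot W^{2,p}(\R^n)$ with $p>n/2$ and to conclude continuity from $W^{2,p}\hookrightarrow C^0$ for $p>n/2$. But the hypothesis $\Er_n(\vPhi)<\eps_*$ is scale-invariant, and the domain is the whole $\R^n$: there is no H\"older inequality available to pass from the critical scale $L^{n/2}$ to any subcritical $L^p$ with $p>n/2$. In fact the curvature need not lie in any such $L^p$. The same objection applies to your final regularity step: $\Ric^g\in L^{n/2}(\R^n)$ alone does not force $g_{ij}\in C^0$, and this failure is exactly the analytic crux of the problem.

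The paper resolves this by working in the Lorentz space $L^{\left(\frac{n}{2},1\right)}$ rather than in any $L^p$ with $p>n/2$. The point is that $\vII_{\vPhi}\in W^{\frac{n}{2}-1,2}\hookrightarrow L^{(n,2)}$, and the product of two $L^{(n,2)}$ functions lies in $L^{\left(\frac{n}{2},1\right)}$; hence $\Riem^{g_{\vPhi}}\in L^{\left(\frac{n}{2},1\right)}(\R^n)$ with a scale-invariant bound. The critical embedding $W^{2,\left(\frac{n}{2},1\right)}\hookrightarrow W^{1,(n,1)}\hookrightarrow C^0$ then replaces your subcritical $W^{2,p}\hookrightarrow C^0$. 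All of the Uhlenbeck gauge estimates, the Hodge decomposition for $dy^i-\omega^i$, and the perturbation to harmonic coordinates are carried out in this Lorentz--Sobolev scale. A second point you gloss over: the coframe is not closed, so ``integrating the dual coframe'' is not literally possible; one must solve an elliptic system such as \eqref{eq:system_y}, and doing so with uniform estimates requires $g$ already close to $\delta$. The paper closes this loop via a continuity argument on the path $g_t=g(t\,\cdot)$ (\Cref{cl:U_connected} and the sets \eqref{eq:def_Ueps}--\eqref{eq:def_Veps}), which your sketch omits.
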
 

    For weak immersions $\vPhi\in \I_{\frac{n}{2}-1,2}(\B^n;\R^d)$, the construction of harmonic coordinates boils down to knowing whether we can extend $\vPhi$ into a weak immersion of $\R^n$ into $\R^d$ while keeping a control on the second fundamental form. For instance, we can consider immersions $\vPhi\colon \B^n\to \R^d$ such that its restriction to the sphere $\vPhi\colon \s^{n-1}\to \s^{d-1}$ parametrizes the graph of a function close to zero. We perform this extension in \Cref{th:local_imm_v2} and obtain the existence of harmonic coordinates in this setting.\\
    
    The continuity of the metric might be surprising at first glance since we are assuming merely $\vPhi\in W^{\frac{n}{2}+1,2}(\B^n)\hookrightarrow W^{2,n}(\B^n)$. This space does not embed in $C^1$, and thus the metric $g_{\vPhi}$ has no reason to be continuous a priori. Indeed, the standard Sobolev injections state that $W^{2,p}(\B^n)\hookrightarrow C^{1,1-\frac{n}{p}}(\B^n)$ for $p>n$, which is the reason why Hölder spaces appear in non-critical situations, but $W^{2,n}$ does not embed in $C^1$. The crucial analytic tool in our situation is the refinement of Sobolev immersions using Lorentz spaces $L^{(p,q)}$ (with $q\in[1,+\infty]$). These Lorentz spaces are refinements of the standard Lebesgue spaces $L^p$ in the following sense: 
    \begin{enumerate}
        \item We have $L^{(p,p)} = L^p$,
        
        \item If $p>1$ and $1\leq q<r\leq +\infty$, then $L^{(p,q)} \subsetneq L^{(p,r)}$,
    
        \item If $p>q>1$ and $r,s\in[1,+\infty]$, then $L^{(p,r)}\subsetneq L^{(q,s)}$.
    \end{enumerate}
    Moreover, if $p<n$, then the injection $W^{1,p}(\B^n)\hookrightarrow L^{\frac{np}{n-p}}(\B^n)$ can be improved in $W^{1,p}(\B^n)\hookrightarrow L^{\left( \frac{np}{n-p}, p \right)}(\B^n)$. If now we consider functions with derivates in a Lorentz space, then the Lorentz exponent is preserved: we have $W^{1,(p,q)}(\B^n)\hookrightarrow L^{\left(\frac{np}{n-p},q\right)}(\B^n)$.
    Consequently, the second fundamental form $\vII_{\vPhi}$ of a weak immersion $\vPhi\in\I_{\frac{n}{2}-1,2}(\B^n;\R^n)$ lies in $W^{\frac{n}{2}-1,2}(\B^n)$, which implies that $\vII_{\vPhi}$ also lies in $L^{(n,2)}(\B^n)$. Another property of Lorentz spaces is that the Lorentz exponent improves with products. More precisely, if we consider two functions $f,g\in L^{(p,q)}$ with $p\geq 2$, then the product $fg$ lies in $L^{\left(\frac{p}{2},\frac{q}{2}\right)}$. For a weak immersion $\vPhi\in \I_{\frac{n}{2}-1,2}(\B^n;\R^d)$, the Gauss--Codazzi equations \eqref{eq:GC} imply that $\Riem^{g_{\vPhi}} \in L^{\left(\frac{n}{2},1\right)}(\B^n)$. If $g_{\vPhi}$ possessed controlled harmonic coordinates, then we could conclude that the coefficients $(g_{\vPhi})_{ij}$ lie in $W^{2,\left(\frac{n}{2},1\right)}$. The crucial property of this Lorentz--Sobolev space, is a critical improvement of the Sobolev spaces. Indeed we have the injections $W^{2,\left(\frac{n}{2},1\right)}(\B^n)\hookrightarrow W^{1,(n,1)}(\B^n)\hookrightarrow C^0(\B^n)$ (not just $L^{\infty}$). Consequently, the existence of harmonic coordinates imply continuity estimates on the coefficients of the metric by its curvature.\\

    However, understanding the interaction between a given Riemannian metric $g$ and its Riemann curvature tensor $\Riem^g$ is a difficult problem due to the highly non-linear nature of the Riemann tensor but also because of its geometric nature: both $g$ and $\Riem^g$ are tensors and thus, independent of the choice of coordinates. If one roughly describe the Levi-Civita connection $\g^g$ as the "gradient" of $g$, then $\Riem^g$ should consists in the Hessian of $g$ and its trace, namely the Ricci curvature $\Ric^g$, should be a geometric version of the Laplacian of $g$. This idea can be made partially true by choosing good coordinates, now known as harmonic coordinates. More precisely, the components of the Ricci tensor in local coordinates are given by the following formula, as proved in \cite[Lemma 4.1]{deturck1981}, where $\Gamma_{ij}^k$ are the Christoffel symbols of $g$ and $Q(g,\dr g)$ denotes quadratic terms in the coefficients $g_{\alpha\beta}$, $g^{\alpha\beta}$ and $\dr_{\gamma} g_{\alpha\beta}$, 
	\begin{align}\label{eq:Ricci}
		\begin{cases} 
			\Ric^g_{ij} = -\frac{1}{2}\, g^{\alpha\beta}\, \dr^2_{\alpha\beta} g_{ij} + \frac{1}{2}\left( g_{ri}\, \dr_j \Gamma^r + g_{rj}\, \dr_i \Gamma^r \right) + Q_{ij}(g,\dr g), \\[2mm]
			\Gamma^r = g^{ij}\, \Gamma^r_{ij}.
		\end{cases} 
	\end{align}
	If one finds coordinates in which $\Gamma^r=0$ and the coefficients $g^{\alpha\beta}$ are sufficiently regular, then the above formulas can be understood as an elliptic system on the coefficients $g_{ij}$. Such coordinates $(x^1,\ldots,x^n)$ are called harmonic because they satisfy $\lap_g x^i=0$ and their existence have been proved by DeTurck--Kazdan \cite{deturck1981} in 1981. However, no quantitative estimate on the size of the domain and no geometric condition that would guarantee their existence on some ball of fixed size have been proved in \cite{deturck1981}. Due to the possible applications to analytic problems such as the regularity and compactness questions of Einstein or Bach-flat metrics \cite{anderson1989,anderson2004,czimek2019,yangI1992,yangII1992}, a lot of attention have been drown to the construction of such coordinates. Indeed, once we know that a sequence of manifold converges in Gromov--Hausdorff topology, we need an atlas with uniform estimates to prove that the convergence holds in a stronger topology, as described in \cite{petersen2016}, if for instance we have an $L^p$ bound on the Riemannian curvature for some $p>\frac{n}{2}$. In the Lorentzian setting, such coordinates are known as the wave coordinates and have been used to study the Cauchy problem and the stability Minkowski space in general relativity, see for instance \cite{choquet2009}. \\
	
	There are essentially three common ways to construct harmonic coordinates on a Riemannian manifold $(M^n,g)$ of dimension $n\geq 3$. 
	\begin{enumerate}
		\item The first is to consider a given ball $B_g(p,r)\subset M$ that carries coordinates $(x^1,\ldots,x^n)$ in which we already know that the coefficients $g_{ij}$ of the metric are sufficiently regular for the following argument to hold. We then consider the elliptic system 
		\begin{align}
			\begin{cases}
				\lap_g z^i =0 & \text{ in }B_g(p,r),\\
				z^i = x^i & \text{ on }\dr B_g(p,r).
			\end{cases}
		\end{align}
		If $r$ is small enough, then the coefficients $g_{ij}$ are close to a constant matrix and one can deduce that the map $z\coloneq (z^1,\ldots,z^n)$ defines a coordinates system, which is harmonic by definition. To perform this argument and obtain a control on $z$, one needs to have an \textit{a priori} control on the metric in at least some Hölder or Sobolev space. 
		
		\item A second way is to proceed by contradiction, as done for instance in \cite{anderson1992,hiroshima1995}. Under some geometric conditions, for instance the Ricci tensor bounded in some $L^p$ for $p>\frac{n}{2}$ by some $\Lambda>0$ with the injectivity radius bounded from below by some $\iota>0$ and the volume bounded from above by some $V>0$, we assume that the harmonic radius (the smallest radius where harmonic coordinates exist) is not bounded from below by a constant depending only on $\Lambda$, $\iota$ and $V$. We then obtain a sequence of manifolds that (up to rescaling) will converge to $\R^n$ but with finite harmonic radius, which is impossible.
		
		\item A third possibility, is to regularize the metric through the Ricci flow as proved in \cite{yangI1992,yangII1992}. Indeed, the $\eps$-regularity coming with such a regularization allows to reduce the construction of harmonic coordinates to the case where one has $L^{\infty}$ bounds on the curvature. This requires to have an a priori estimate on the Sobolev constant. In this setting, the construction of harmonic coordinates have been obtained by Jost--Karcher \cite{jost1982}, see also \cite{jost1984}, thanks to a careful study of the distance function and Jacobi fields.
	\end{enumerate}
	The first method is not adapted for compactness questions of metrics satisfying some $L^p$ bound on some curvature, since we do not have any a priori control on the metric. The other methods require to have at least the Ricci tensor in some $L^p$ for $p>\frac{n}{2}$ but also the control on other geometric quantities such as the injectivity radius or the Sobolev constants. Hence, one can wonder whether there is an intermediate space containing all the $L^p$ for $p>\frac{n}{2}$ that would allow for the existence and control of harmonic coordinates. From a heuristic view-point, one needs at least $L^{\infty}$ estimates on the metric to use elliptic regularity for the system 
	\begin{align*}
		\Ric^g_{ij} = -\frac{1}{2}\, g^{\alpha\beta}\, \dr^2_{\alpha\beta} g_{ij} + Q_{ij}(g,\dr g).
	\end{align*}
	Thus, the question boils down to knowing which is the minimal functional space $X$ for which solutions to $\lap u \in X$ verify $u \in L^{\infty}$. If $X=L^p$ for $p\in(1,\infty)$, then we obtain $u\in W^{2,p}$ by Calderon--Zygmund theory. If $p\in (\frac{n}{2},n)$, then we have the Sobolev injections $W^{2,p}\hookrightarrow W^{1,\frac{np}{n-p}}$, with $\frac{np}{n-p}>n$ for $p>\frac{n}{2}$. Thus, there exists $\alpha=\alpha(n,p)\in(0,1)$ such that $W^{1,\frac{np}{n-p}}\hookrightarrow C^{0,\alpha}$ and we obtain $L^{\infty}$ estimates. However, for $p\leq \frac{n}{2}$, we have $\frac{np}{n-p}\leq n$ and the injections $W^{1,\frac{np}{n-p}}\hookrightarrow L^{\infty}$ is known to fail. For $p=\frac{n}{2}$, we have $\frac{np}{n-p}=n$ and the space $W^{1,n}$ is a subset of the set of functions with bounded mean oscillations $BMO$, different from $L^{\infty}$ (the logarithm for instance, lies in $BMO\setminus L^{\infty}$). In order to have precisely the injection in $L^{\infty}$, one needs to refine the Sobolev space and consider for instance Lorentz spaces. Indeed, we have the injection $W^{1,(n,1)}\hookrightarrow C^0$ but $W^{1,(n,q)}\not\hookrightarrow L^{\infty}$ for any $q>1$. This suggests that the critical regularity scale for the existence of harmonic coordinates should be that $\dr_k g_{ij} \in L^{(n,1)}$. Adding one more derivative in order to define the Riemann tensor, one would ask that the second derivatives $\dr^2_{kl} g_{ij}$ lie in $L^{\left(\frac{n}{2},1\right)}$ and thus $\Riem_{ijkl}\in L^{\left(\frac{n}{2},1\right)}$. Moreover, for any $p>\frac{n}{2}$, we have $L^p \subset L^{\left(\frac{n}{2},1\right)}$. Hence, the assumption $\Riem_{ijkl}\in L^{\left(\frac{n}{2},1\right)}$ is much weaker than the assumption $\Riem_{ijkl}\in L^p$ for some $p>\frac{n}{2}$. As explained previously, this setting arise naturally in the context of variational calculus in extrinsic conformal geometry and thus, requires a careful analysis. \\
    
	In this work, we study the construction of coordinates having maximal regularity for metrics $g$ satisfying $\Riem^g\in L^{\left(\frac{n}{2},1\right)}$. The main idea is to generalize the use of Coulomb frame introduce by Hélein in the framework of Willmore surfaces to construct isothermal coordinates in dimension 2 \cite{helein2002}. Assume for simplicity, that $g_{ij}$ is close to $\delta_{ij}$. The first step is to apply Uhlenbeck's construction of Coulomb coframe \cite{uhlenbeck1982} (for the Euclidean metric) which guarantees the existence of a coframe $(\omega^1,\ldots,\omega^n)$ for $g$ and Coulomb for the Euclidean metric, under the assumption $\Riem^g\in L^{\left(\frac{n}{2},1\right)}$. To construct coordinates on a ball $\B^n$, we consider for each $i\in\{1,\ldots,n\}$ the solution $y^i\colon \B^n \to \R$ to the system
	\begin{align}\label{eq:system_y}
		\begin{cases}
			-\lap_g y^i = d^{*_g} \omega^i & \text{ in }\B^n ,\\[2mm]
			\dr_{\nu}y^i = \omega^i(\nu) & \text{ on }\dr\B^n ,\\[2mm]
			\int_{\B} y^i(x)\, dx = 0.
		\end{cases}
	\end{align}
	Since $g_{ij}$ is close to $\delta_{ij}$, we can apply the standard elliptic estimates and we obtain that $dy^i$ is close to $\omega^i$ at distance $\|\Riem^g\|_{L^{\left(\frac{n}{2},1\right)}}$. Since $g=\sum_i \omega^i \otimes \omega^i$, we deduce that $g_{ij}$ is close to $\scal{\dr_i y}{\dr_j y}_{\geu}$ and that $(y^{-1})^*g$ is close to $\delta_{ij}$ at distance $\|\Riem^g\|_{L^{\left(\frac{n}{2},1\right)}}$ as well. This idea is at the heart of the continuity method used in \cite{uhlenbeck1982} to construct Coulomb frames. In order to close this continuity argument, we need to control the domain $y(\B^n)$. This is \textit{a priori} unclear in the full generality that we are addressing. For instance one could consider $g$ of the form $f^*\geu$ for some arbitrary diffeomorphism $f\colon \B^n\to \Omega$ with $\Omega\subset \R^n$ being an open set. In this case, the coordinates we construct are exactly $y=f$ and we are working on an arbitrary open set $\Omega\subset \R^n$ that is diffeomorphic to $\B^n$. Hence, we will rather work on the full $\R^n$ instead of a fixed domain and discuss this restriction below. By solving the following system, we obtain harmonic coordinates
    \begin{align}\label{eq:perturb}
        \begin{cases}
            \lap_{(y^{-1})^*g} z^i = 0 & \text{ in }\B^n,\\[2mm]
            z^i = y^i & \text{ on }\dr \B^n.
        \end{cases}
    \end{align}
    This construction answers a question raised by Hélein in Remark 5.4.6 of \cite{helein2002} about the construction of coordinates induced by Coulomb frames in dimensions $n\geq 3$.\\
	
	In order to work on $\R^n$, we need to use Sobolev norms that are scale-invariant. Given an open set $\Omega\subset \R^n$, we define the following norm for maps $f\colon \Omega\to \R$ whose Hessian lies in $L^{\left(\frac{n}{2},1\right)}$
	\begin{align*}
		\|f\|_{\bar{W}^{2,\left(\frac{n}{2},1\right)}(\Omega)} \coloneqq \|f\|_{L^{\infty}(\Omega)} + \|\g f\|_{L^{(n,1)}(\Omega)} + \|\g^2 f\|_{L^{\left(\frac{n}{2},1\right)}(\Omega)}.
	\end{align*}
	Since we will work with discontinuous metrics, we define the following notion of weak metric, which is essentially the same as the one introduced by the second author in the context of Willmore surfaces in \cite{riviere2016}.
	
	\begin{defi}\label{def:Weak_metric}
		Let $n\geq 3$ be an integer, $\Omega\subset \R^n$ be an open set and $S_n^+$ be the set of symmetric positive definite 2-tensors on $\R^n$. We say that a map $g\colon \Omega\to S_n^+$ is a weak metric of class $W^{k,p}(\Omega)$ (or $\bar{W}^{2,\left(\frac{n}{2},1\right)}(\Omega)$) if the following two conditions are satisfied for the coefficients $g_{\alpha\beta}$ in the expansion $g=g_{\alpha\beta}\, dx^{\alpha}\otimes dx^{\beta}$:
		\begin{enumerate}
			\item there exists $\lambda>1$ such that the following pointwise inequality of matrices is verified a.e. on $\Omega$:
			\begin{align*}
				\lambda^{-1}\, \delta_{\alpha\beta} \leq g_{\alpha\beta} \leq \lambda\, \delta_{\alpha\beta}.
			\end{align*}
			\item the coefficients $g_{\alpha\beta}$ lies in the Sobolev space $W^{k,p}(\Omega)$ (or  $\bar{W}^{2,\left(\frac{n}{2},1\right)}(\Omega)$).
		\end{enumerate}
	\end{defi}

    It turns out that this strategy works for general weak metrics on $\R^n$ with Riemann tensor in $L^{\left(\frac{n}{2},1\right)}$. Hence, we first prove the following result, which provides the existence of controlled coordinates for weak metrics of class $\bar{W}^{2,\left(\frac{n}{2},1\right)}(\R^n)$, see \Cref{th:diff_Rn_v2} below for a stronger version. 
	
	\begin{theorem}\label{th:diff_Rn}
		Let $n\geq 3$ be an integer. There exist $\eps_0>0$ and $C_0>0$ depending only on $n$ satisfying the following property. Let $g$ be a weak metric on $\R^n$ of class $ \bar{W}^{2,\left(\frac{n}{2},1\right)}(\R^n)$ such that 
		\begin{align}\label{eq:smallness}
			\|\Riem^g\|_{L^{\left(\frac{n}{2},1\right)}(\R^n,g)} \leq \eps_0.
		\end{align}
		Then there exists a $C^1$ diffeomorphism $y\colon \R^n\to \R^n$ such that for $i,j\in\{1,\ldots,n\}$, it holds
		\begin{align}\label{eq:control_components}
			\|(y^*g)_{ij} - \delta_{ij} \|_{\bar{W}^{2,\left(\frac{n}{2},1\right)}(\R^n)} \leq C_0\, \|\Riem^g\|_{L^{\left(\frac{n}{2},1\right)}(\R^n,g)}.
		\end{align}
	\end{theorem}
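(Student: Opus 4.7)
The strategy closely follows the outline sketched in the introduction, generalizing to scale-invariant Lorentz--Sobolev spaces on $\R^n$ the Coulomb-frame method developed by Hélein in dimension two. I proceed in four steps.

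\emph{Step 1: Coulomb coframe.} Using an Uhlenbeck-type small-curvature construction adapted to the space $\bar{W}^{2,\left(\frac{n}{2},1\right)}(\R^n)$, I produce a coframe $(\omega^1,\dots,\omega^n)$ which is orthonormal with respect to $g$ and in Coulomb gauge with respect to the Euclidean metric: $d^{*_{\geu}}\omega^i = 0$. After fixing the $SO(n)$ gauge so that $\omega^i\to dx^i$ at infinity, the smallness hypothesis \eqref{eq:smallness} together with the structure equations (which give $d\omega^i = O(\Riem^g)$ up to connection terms also controlled by the construction) yield
\begin{align*}
\sum_{i=1}^n \|\omega^i - dx^i\|_{L^{\infty}(\R^n)} + \|\g \omega^i\|_{L^{(n,1)}(\R^n)} + \|\g^2\omega^i\|_{L^{\left(\frac{n}{2},1\right)}(\R^n)} \leq C\, \|\Riem^g\|_{L^{\left(\frac{n}{2},1\right)}(\R^n,g)}.
\end{align*}

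\emph{Step 2: Integrating the coframe.} Although $\omega^i$ is not closed, $d\omega^i$ is of the order of the curvature. I integrate the coframe up to a small error by solving on $\R^n$
\begin{align*}
-\lap_g y^i = d^{*_g}\omega^i \quad\text{on } \R^n, \qquad y^i(x) - x^i \to 0 \text{ as } |x|\to\infty .
\end{align*}
Since $g_{ij} = \sum_k \omega^i_k \omega^j_k$ is close to $\delta_{ij}$ by Step 1, this is a perturbation of the Euclidean Poisson equation and a fixed-point argument based on Calderón--Zygmund estimates in Lorentz spaces produces a unique solution with $y^i - x^i \in \bar{W}^{2,\left(\frac{n}{2},1\right)}(\R^n)$. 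The 1-form $\eta^i := dy^i - \omega^i$ satisfies by construction $d^{*_g}\eta^i = 0$ and $d\eta^i = -d\omega^i$, which is small; the corresponding Hodge-elliptic estimate then yields
\begin{align*}
\|dy^i - \omega^i\|_{L^{\infty}(\R^n)} + \|\g(dy^i-\omega^i)\|_{L^{(n,1)}(\R^n)} + \|\g^2(dy^i-\omega^i)\|_{L^{\left(\frac{n}{2},1\right)}(\R^n)} \leq C\, \|\Riem^g\|_{L^{\left(\frac{n}{2},1\right)}(\R^n,g)}.
\end{align*}

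\emph{Step 3: Global inversion and pullback estimate.} The embedding $\bar{W}^{2,\left(\frac{n}{2},1\right)}(\R^n)\hookrightarrow W^{1,(n,1)}(\R^n)\hookrightarrow C^0(\R^n)$ ensures that $\g y$ is continuous and, by Step 2, uniformly close to the identity matrix. Hence $y$ is a local $C^1$-diffeomorphism; combining the decay $y(x) = x + o(1)$ at infinity with a standard degree/proper-map argument upgrades this to a global bi-Lipschitz homeomorphism of $\R^n$. Finally, writing $(y^*g)_{ij} = g_{\alpha\beta}(y)\,\dr_i y^\alpha\, \dr_j y^\beta$ and using $g_{\alpha\beta} = \sum_k \omega^k_\alpha \omega^k_\beta$ together with $dy^k = \omega^k + O(\|\Riem^g\|)$, the pointwise identity plus differentiation (controlled via the bounds of Steps 1--2) deliver the full $\bar{W}^{2,\left(\frac{n}{2},1\right)}$-estimate \eqref{eq:control_components}.

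\emph{Main obstacle.} The principal difficulty lies in carrying out the entire construction --- Uhlenbeck's gauge fixing, the Poisson inversion, the Hodge decomposition estimates --- directly on the non-compact manifold $\R^n$ in the scale-invariant Lorentz--Sobolev framework, with no compact domain to fall back on. In particular, the critical embedding $W^{1,(n,1)}(\R^n) \hookrightarrow C^0(\R^n)$ (which fails for any coarser Lorentz exponent) is indispensable to conclude continuity of the pullback metric, and the pointwise decay $\omega^i - dx^i \to 0$ must be propagated from Step 1 through Step 2 in order to secure via the degree argument that $y$ is a \emph{global} bi-Lipschitz homeomorphism rather than merely a local diffeomorphism onto an open subset of $\R^n$.
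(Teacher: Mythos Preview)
Your Step 1 contains a genuine gap that the rest of the argument cannot recover from. Uhlenbeck's construction controls the \emph{connection forms} $\omega^i_{\ j}$ by the curvature, not the coframe $\omega^i$ itself. The estimate you claim,
\[
\sum_i \|\omega^i - dx^i\|_{L^{\infty}(\R^n)} + \|\g \omega^i\|_{L^{(n,1)}(\R^n)} + \|\g^2\omega^i\|_{L^{\left(\frac{n}{2},1\right)}(\R^n)} \leq C\, \|\Riem^g\|_{L^{\left(\frac{n}{2},1\right)}(\R^n,g)},
\]
is false without further hypotheses: take $g=4\,\geu$, which is flat, yet any $g$-orthonormal coframe satisfies $|\omega^i|_{\geu}=2$, so $\|\omega^i-dx^i\|_{L^\infty}\geq 1$. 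Even after normalising $g(x_0)=\delta$ at one point, nothing in the hypotheses forces $g_{ij}$ to be uniformly close to $\delta_{ij}$ on all of $\R^n$; the ellipticity constant $\lambda$ in Definition~\ref{def:Weak_metric} is not quantified. Your Step 2 then uses that ``$g_{ij}$ is close to $\delta_{ij}$ by Step 1'' to run perturbative elliptic estimates, so the argument is circular.

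What the paper actually does is precisely designed to break this circularity. First (Propositions~\ref{pr:apriori_local} and~\ref{pr:apriori_global}) it proves the a priori estimate you want \emph{under the additional assumption} that $g_{ij}$ is already $\eps$-close to $\delta_{ij}$ in $\bar{W}^{2,\left(\frac{n}{2},1\right)}$; in that regime Uhlenbeck plus the Neumann system for $y^i$ works essentially as you describe (on balls $\B_s$, then $s\to\infty$, rather than directly on $\R^n$). Second, and this is the missing ingredient in your proposal, it runs a continuity method (Section~\ref{sec:Continuity}): the set of metrics admitting such coordinates is shown to be open and closed in the path-connected set $\Ur^\eps$ of all weak metrics with small curvature and $g(x_0)=\delta$, the connecting path being the scaling $g_t\coloneqq g(t\cdot)$, which satisfies $\|\Riem^{g_t}\|_{L^{(n/2,1)}}=\|\Riem^g\|_{L^{(n/2,1)}}$ and converges to $\geu$ as $t\to 0$. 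Without this bootstrap you cannot pass from ``connection forms small'' to ``metric close to Euclidean''.
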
 
	
	The main achievement of the above result is that the initial regularity assumption $\bar{W}^{2,\left(\frac{n}{2},1\right)}(\R^n)$ of the metric does not need to be quantified and no estimate on the Sobolev constant or injectivity radius is needed. In particular, the constant $\lambda$ required to define a weak metric does not play any role. Hence, we can lower this initial regularity as long as one can still make sense of the smallness assumption \eqref{eq:smallness}. For instance, we can extend it to weak metrics with Hessian in $L^{\left(\frac{n}{2},2\right)}$, see \Cref{th:Weak_metrics_v2} below. Moreover, the control \eqref{eq:control_components} implies that by solving \eqref{eq:perturb}, we can obtain controlled harmonic coordinates. We obtain the following result.
	
	\begin{theorem}\label{th:Weak_metrics_harm}
    Let $n\geq 3$ be an integer.
		There exist $\eps_1>0$ and $C_1>0$ depending only on $n$ satisfying the following property. Let $g$ be a weak metric on $\R^n$ such that $\dr_k g_{ij}\in L^{(n,2)}(\R^n)$, $\dr^2_{kl} g_{ij} \in L^{\left(\frac{n}{2},2\right)}(\R^n)$ and 
		\begin{align*}
			\|\Riem^g\|_{L^{\left(\frac{n}{2},1\right)}(\R^n,g)} \leq \eps_1.
		\end{align*}
		Then there exists a bi-Lipschitz homeomorphism $z\colon \R^n\to \R^n$ providing harmonic coordinates for $g$ on $\R^n$ and such that the following estimates hold for all $1\leq i,j\leq n$
		\begin{align*}
			\left\|(z^*g)_{ij} - \delta_{ij} \right\|_{\bar{W}^{2,\left(\frac{n}{2},1\right)}(\R^n,\geu)} \leq C_1\, \|\Riem^g\|_{L^{\left(\frac{n}{2},1\right)}(\R^n,g)}.
		\end{align*}
	\end{theorem}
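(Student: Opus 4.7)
The plan is to reduce to a near-Euclidean intermediate metric via \Cref{th:diff_Rn}, build a near-identity harmonic chart in those coordinates by a fixed-point argument, and recover the sharp $\bar W^{2,(n/2,1)}$ regularity of the final metric from the Ricci equation written in harmonic coordinates.

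First, I apply \Cref{th:diff_Rn_v2}---the stronger variant of \Cref{th:diff_Rn} announced in the excerpt, which handles precisely the regularity of the present hypotheses, $\partial g\in L^{(n,2)}$ and $\partial^2 g\in L^{(n/2,2)}$. This produces a $C^1$ diffeomorphism $y\colon\R^n\to\R^n$ for which the pullback $\tilde g:=y^*g$ is of class $\bar W^{2,(n/2,1)}(\R^n)$ with
$$\|\tilde g-\delta\|_{\bar W^{2,(n/2,1)}(\R^n)}\leq C_0\,\|\Riem^g\|_{L^{(n/2,1)}(\R^n,g)},$$
and by naturality $\|\Riem^{\tilde g}\|_{L^{(n/2,1)}(\R^n,\tilde g)}$ is of the same order. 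The map $y$ is bi-Lipschitz by the uniform ellipticity of $g$.

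Second, I construct harmonic coordinates $w=\Id+v\colon\R^n\to\R^n$ for $\tilde g$ by a Banach fixed-point argument. The equation $\Delta_{\tilde g}w^i=0$ rewrites as
$$-\Delta v^i = (\Delta_{\tilde g}-\Delta)v^i + \Gamma^i(\tilde g)\quad\text{on }\R^n,$$
with $\Gamma^i(\tilde g):=\tilde g^{\alpha\beta}\Gamma^i_{\alpha\beta}(\tilde g)\in L^{(n,1)}(\R^n)$ and perturbation operator of the form $(\tilde g-\delta)\cdot\nabla^2+\partial\tilde g\cdot\nabla$. Combining the Lorentz Calder\'on--Zygmund isomorphism for $-\Delta$ on $\R^n$ (in the subspace of functions vanishing at infinity) with the Lorentz-product inclusion $L^{(n,1)}\cdot L^{(n,1)}\hookrightarrow L^{(n/2,1)}$ and the critical embedding $W^{1,(n,1)}\hookrightarrow C^0$, a contraction mapping yields a unique small $v$ with $\|\nabla v\|_{L^\infty}+\|\nabla^2 v\|_{L^{(n,1)}}\leq C\,\|\Riem^g\|_{L^{(n/2,1)}}$. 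In particular $w=\Id+v$ is a bi-Lipschitz homeomorphism of $\R^n$.

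Third, I set $z:=y\circ w^{-1}\colon\R^n\to\R^n$, which is bi-Lipschitz as a composition of bi-Lipschitz maps. Since $y\colon(\R^n,\tilde g)\to(\R^n,g)$ is an isometry and each $w^i$ is $\tilde g$-harmonic, the inverse chart $z^{-1}=w\circ y^{-1}$ has $g$-harmonic components, because
$$\Delta_g(w^i\circ y^{-1})=(\Delta_{\tilde g}w^i)\circ y^{-1}=0,$$
so $z$ provides harmonic coordinates for $g$. The pullback $\bar g:=z^*g=(w^{-1})^*\tilde g$ is the metric in these harmonic coordinates, and for the sharp regularity estimate I invoke \eqref{eq:Ricci}: in harmonic coordinates the equation reduces to the semilinear elliptic system
$$-\tfrac12\,\bar g^{\alpha\beta}\partial^2_{\alpha\beta}\bar g_{ij} = \Ric^{\bar g}_{ij} - Q_{ij}(\bar g,\partial\bar g),$$
whose right-hand side is bounded in $L^{(n/2,1)}(\R^n)$ by $\|\Riem^g\|_{L^{(n/2,1)}}$ plus quadratic terms in $\partial\bar g\in L^{(n,1)}$, which land in $L^{(n/2,1)}$ via the Lorentz-product inclusion. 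Since the leading operator is uniformly elliptic ($\bar g$ is close to $\delta$ in $L^\infty$ by construction), elliptic regularity yields $\nabla^2\bar g\in L^{(n/2,1)}(\R^n)$ with the desired bound, which together with the obvious bounds on $\bar g-\delta$ and $\nabla\bar g$ gives the full $\bar W^{2,(n/2,1)}$ estimate.

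The hardest step is the interplay between the fixed-point construction and the final regularity bootstrap: on $\R^n$ one must use the Lorentz refinement of Sobolev spaces throughout because $L^{n/2}\not\hookrightarrow L^\infty$ at the Lebesgue endpoint, and the two-step regularity (first $v\in W^{2,(n,1)}$ from the fixed point, then $\bar g-\delta\in\bar W^{2,(n/2,1)}$ from the Ricci equation) avoids having to match regularities directly between $v$ and the final metric, which would otherwise be incompatible on the whole $\R^n$.
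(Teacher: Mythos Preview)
Your overall strategy matches the paper: pull back to a near-Euclidean intermediate metric via the preliminary change of coordinates, build harmonic coordinates there as a small perturbation of the identity, and compose. Two points deserve comment.

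First, a labelling issue: the result you need in step one is \Cref{th:Weak_metrics_v2}, not \Cref{th:diff_Rn_v2}. The latter still assumes the metric is of class $\bar W^{2,(n/2,1)}$; the extension to the hypotheses $\partial g\in L^{(n,2)}$, $\partial^2 g\in L^{(n/2,2)}$ is a separate mollification argument carried out in \Cref{sec:ProofSecond}.

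Second, and more substantively, your two-step regularity scheme has a gap. In step two you only place $\nabla^2 v\in L^{(n,1)}$, so $\nabla^3 w^{-1}$ is uncontrolled and hence $\nabla^2\bar g$ has no established integrability on $\R^n$. Your Ricci bootstrap then reads $\Delta\bar g_{ij}=(\delta^{\alpha\beta}-\bar g^{\alpha\beta})\,\partial^2_{\alpha\beta}\bar g_{ij}+F$ with $F\in L^{(n/2,1)}$, and the absorption argument requires $\|\nabla^2\bar g\|_{L^{(n/2,1)}(\R^n)}<\infty$ to begin with---which you have not shown. The paper sidesteps this entirely: it solves Dirichlet problems $\Delta_h\tilde z^i_s=0$ on balls $\B_s$ with $\tilde z^i_s=x^i$ on $\partial\B_s$, obtains directly $\|d\tilde z_s-I_n\|_{\bar W^{2,(n/2,1)}(\B_s)}\le C\|\Riem^g\|$ from elliptic regularity on bounded domains (where a priori finiteness is automatic), lets $s\to\infty$, and then reads off the metric estimate \emph{algebraically} from $\|h-\delta\|_{\bar W^{2,(n/2,1)}}$ and $\|d\tilde z-I_n\|_{\bar W^{2,(n/2,1)}}$ using that $\bar W^{2,(n/2,1)}$ is a Banach algebra---no Ricci equation needed. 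Your scheme is repairable (run the fixed point in the stronger space with $\nabla^3 v\in L^{(n/2,1)}$, which it admits since $\nabla\Gamma^i(\tilde g)\in L^{(n/2,1)}$; or argue by approximation), but as written the bootstrap step is incomplete.
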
 
	
	In order to obtain a local version, say for weak metrics on $\B^n$, the only problem boils down to extending a given metric on $\B^n$ to a metric on $\R^n$ that preserves the smallness of the Riemann tensor. We do not know whether this is possible in full generality. For instance, we obtain the following result.
	\begin{theorem}\label{th:local}
		Let $n\geq 3$. There exists $\eps_2>0$ and $C_2>0$ depending only on $n$ such that the following holds. Let $g$ be a weak metric on the unit ball $\B^n$ of class $W^{2,\left(\frac{n}{2},2\right)}(\B^n)$ such that for any $i,j\in\{1,\ldots,n\}$, it holds
		\begin{align}\label{eq:boundary}
			\|g_{ij} - \delta_{ij}\|_{W^{2,n}(\s^{n-1})} + \|\dr_r g_{ij}\|_{W^{1,n}(\s^{n-1})}+ \|\Riem^g\|_{L^{\left(\frac{n}{2},1\right)}(\B^n,g)} \leq \eps_2.
		\end{align}
		Then there exists an open set $\Omega\subset \R^n$ and a bi-Lipschitz homeomorphism $z\colon \Omega\to \B^n$ providing harmonic coordinates for $g$ such that for every $1\leq i,j\leq n$, it holds
		\begin{align*}
			\left\| (z^*g)_{ij} - \delta_{ij} \right\|_{W^{2,\left(\frac{n}{2},1\right)}(\Omega,\geu)} \leq C_2\, \left(  \sum_{i,j}\left( \|g_{ij} - \delta_{ij}\|_{W^{2,n}(\s^{n-1})} + \|\dr_r g_{ij}\|_{W^{1,n}(\s^{n-1})}\right)+ \|\Riem^g\|_{L^{\left(\frac{n}{2},1\right)}(\B^n,g)} \right).
		\end{align*}
	\end{theorem}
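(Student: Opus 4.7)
My plan is to reduce the statement to the global \Cref{th:Weak_metrics_harm} by constructing an extension of $g$ to a weak metric $\tilde g$ on the whole of $\R^n$ that (i) coincides with $g$ on $\B^n$, (ii) satisfies the regularity hypotheses of \Cref{th:Weak_metrics_harm}, and (iii) preserves the smallness of the Riemann tensor in $L^{\left(\frac{n}{2},1\right)}$. Once such a $\tilde g$ is built, \Cref{th:Weak_metrics_harm} will produce a bi-Lipschitz homeomorphism $\tilde z\colon \R^n\to\R^n$ providing harmonic coordinates for $\tilde g$. Setting $\Omega := \tilde z^{-1}(\B^n)$ and $z := \tilde z|_{\Omega}$, the equality $\tilde g|_{\B^n} = g$ will imply that the components of $z$ are $g$-harmonic on $\B^n$, and the required estimate in $W^{2,\left(\frac{n}{2},1\right)}(\Omega)$ will follow by restricting to $\Omega$ the global one on $\R^n$.

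To produce the extension, I would denote $h_{ij} := g_{ij}|_{\s^{n-1}} - \delta_{ij}$ and $k_{ij} := \dr_r g_{ij}|_{\s^{n-1}}$, which by hypothesis are small in $W^{2,n}(\s^{n-1})$ and $W^{1,n}(\s^{n-1})$ respectively. On the annulus $A := \{ x\in\R^n : 1<|x|<2 \}$ I would solve the biharmonic Dirichlet problem component by component,
\begin{align*}
    \lap^2 u_{ij} = 0 \ \text{in } A, \qquad u_{ij}|_{|x|=1} = h_{ij}, \quad \dr_r u_{ij}|_{|x|=1} = k_{ij}, \qquad u_{ij}|_{|x|=2} = 0, \quad \dr_r u_{ij}|_{|x|=2} = 0,
\end{align*}
whose unique solution $u_{ij}\in W^{2,n}(A)$ is controlled linearly by the boundary data via standard biharmonic elliptic regularity. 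I would then set
\begin{align*}
    \tilde g := g \ \text{on } \B^n, \qquad \tilde g_{ij} := \delta_{ij} + u_{ij} \ \text{on } A, \qquad \tilde g := \geu \ \text{on } \{|x|\geq 2\}.
\end{align*}
The matching of traces and normal derivatives at $|x|=1$ and the vanishing Dirichlet--Neumann data at $|x|=2$ make $\tilde g$ globally of class $C^1$ on $\R^n$, so that no distributional curvature is produced at the gluing interfaces, while the embedding $W^{2,n}(A)\hookrightarrow C^0(A)$ combined with the smallness of the boundary data keeps $\tilde g$ uniformly close to $\geu$ outside $\B^n$, making it a valid weak metric on $\R^n$ in the sense of \Cref{def:Weak_metric}.

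It remains to verify the hypotheses of \Cref{th:Weak_metrics_harm} for $\tilde g$: on $\B^n$ they follow from the assumption, on $\{|x|\geq 2\}$ they are trivial, and on the bounded annulus $A$ they follow from the inclusions $L^n(A)\hookrightarrow L^{\left(\frac{n}{2},1\right)}(A)\hookrightarrow L^{\left(\frac{n}{2},2\right)}(A)$ for the Hessian together with $W^{2,n}(A)\hookrightarrow W^{1,q}(A)\hookrightarrow L^{(n,2)}(A)$ for any $q>n$ for the gradient. Since schematically $\Riem^{\tilde g} = \dr^2 u + Q(\dr u)$ modulo smooth factors in $\tilde g$ and $\tilde g^{-1}$, the Hölder-type inequality $L^{(n,2)}\cdot L^{(n,2)} \hookrightarrow L^{\left(\frac{n}{2},1\right)}$ will control the quadratic term by $\|\dr u\|_{L^{(n,2)}(A)}^2$, and summing the pieces will give
\begin{align*}
    \|\Riem^{\tilde g}\|_{L^{\left(\frac{n}{2},1\right)}(\R^n,\tilde g)} \leq C\left( \sum_{i,j}\left( \|h_{ij}\|_{W^{2,n}(\s^{n-1})} + \|k_{ij}\|_{W^{1,n}(\s^{n-1})}\right) + \|\Riem^g\|_{L^{\left(\frac{n}{2},1\right)}(\B^n,g)}\right),
\end{align*}
which will be bounded by $\eps_1$ provided $\eps_2$ is chosen small enough. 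Applying \Cref{th:Weak_metrics_harm} to $\tilde g$ and restricting to $\Omega$ will conclude. The main obstacle will be the Lorentz bookkeeping on $A$: the upgrade from the $W^{2,n}$ regularity naturally produced by biharmonic theory to the scale-invariant $L^{\left(\frac{n}{2},1\right)}$ estimate on $\Riem^{\tilde g}$ hinges on the boundedness of $A$, on the Lorentz Hölder inequality, and on the sharpness of the $C^1$ matching at $\s^{n-1}$ which prevents any distributional curvature from appearing at the gluing interface and preserves the smallness chain into \Cref{th:Weak_metrics_harm}.
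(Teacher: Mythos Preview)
Your proposal is correct and follows essentially the same strategy as the paper: extend $g$ across the annulus $\B_2\setminus\B_1$ to a weak metric $\tilde g$ on $\R^n$ equal to $\geu$ outside $\B_2$, check that $\|\Riem^{\tilde g}\|_{L^{(n/2,1)}}$ is controlled by the boundary data plus $\|\Riem^g\|_{L^{(n/2,1)}(\B^n)}$, invoke \Cref{th:Weak_metrics_harm}, and restrict to $\Omega=\tilde z^{-1}(\B^n)$.

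The only real difference is the extension mechanism. The paper uses an abstract bounded right inverse of the trace operator into $W^{2+\frac{2}{n},\frac{n}{2}}(\B_2\setminus\B_1)$ (and then multiplies by a cut-off, which is in fact redundant since the prescribed traces at $|x|=2$ already vanish), whereas you produce the extension as the biharmonic lift of the same Dirichlet--Neumann data. Your construction actually yields $u_{ij}\in W^{2+\frac{1}{n},n}(A)$, slightly more than the $W^{2,n}$ you record; either way the Lorentz bookkeeping on the bounded annulus goes through exactly as you indicate.

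One phrasing to tighten: you write that $\tilde g$ is ``globally of class $C^1$ on $\R^n$''. This is not true on $\B^n$, where $g$ is only $W^{2,(\frac{n}{2},2)}$ and need not be $C^1$. What you actually need (and what the matching of traces and normal derivatives does give) is that $\tilde g\in W^{2,(\frac{n}{2},2)}_{\loc}(\R^n)$ across both interfaces, so that the second derivatives carry no singular layer on $\s^{n-1}_1\cup\s^{n-1}_2$ and the pointwise formula \eqref{eq:Riem_coordinates} for $\Riem^{\tilde g}$ holds in $L^{(n/2,1)}$. With that correction your argument is complete and matches the paper's.
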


    \begin{remark}
    \begin{enumerate}
        \item In this result, we can use Hölder inequality to replace the smallness of $\|\Riem^g\|_{L^{\left(\frac{n}{2},1\right)}(\B^n,g)}$ by the smallness of $\vol_g(\B)^{\frac{2}{n} - \frac{1}{p}}\, \|\Riem^g\|_{L^p(\B^n,g)}$ for some $p>\frac{n}{2}$. The constants $\eps_2$ and $C_2$ will then depend on $p$ and blow up as $p\to \frac{n}{2}$.

        \item The initial assumption \eqref{eq:boundary} does not prevent a priori the coefficients $g_{ij}$ to degenerate in $\B^n$.
    \end{enumerate}
    \end{remark}
	
	The system \eqref{eq:system_y} does not provide harmonic coordinates. However, it provides coordinates such that the quantities $\Gamma^r$ defined in \eqref{eq:Ricci} are controlled in $W^{1,\left(\frac{n}{2},2\right)}$ by the Riemann tensor. Thus, the coefficients of the metric in these coordinates still verify an elliptic system for which the regularity is fully determined by the Riemann tensor (the regularity of a Coulomb coframe is fully determined by the regularity of its curvature). With this method, it seems that the full Riemann tensor is involved, and not just the Ricci tensor as in the known existence results for harmonic coordinates. \\

	\subsubsection*{Organization of the paper.} In \Cref{sec:Preliminaries}, we define the Lorentz spaces and present the Sobolev inequalities in this setting. We also define the different notions of curvature, as a differential form or a tensor, and introduce the notations concerning the covariant derivatives. In \Cref{sec:Proofmain}, we prove \Cref{th:diff_Rn}. In \Cref{sec:ProofSecond}, we prove \Cref{th:Weak_metrics_harm}. In \Cref{sec:extension}, we prove \Cref{th:local}. In \Cref{sec:Imm}, we study the case of immersions and we prove \Cref{th:Imm} and its local version \Cref{th:local_imm_v2}.\\

    \subsubsection*{Acknowledgments.} 
    This project is financed by Swiss National Science Foundation, project SNF 200020\textunderscore219429.

	\section{Analytic and geometric preliminaries}\label{sec:Preliminaries}
	
	In this section, we fix some notations, introduce the Lorentz spaces on Riemannian manifolds and present the necessary formulas concerning the curvature and covariant derivatives in coordinates and in moving frames.\\
	
	\emph{Notations:} 
	\begin{enumerate}
		\item In the rest of the paper, the constants $C$ written without dependence in the estimates are constants depending only on $n$.
        
		\item We denote $\geu$ the Euclidean metric on $\R^n$, that is to say $\geu = \delta_{ij}\, dx^i\otimes dx^j$.

        \item Given an open set $\Omega\subset \R^n$, we denote $\Diff(\Omega)$ the set of $C^1$ diffeomorphisms of $\Omega$. We denote $\BiLip(\Omega)$ the set of bi-Lipschitz homeomorphisms of $\Omega$.

        \item For $s>0$, the set $\B_s=\B_s^n$ denotes the open Euclidean ball of $\R^n$ centred at the origin with radius $s$.
	\end{enumerate}

	\subsection{Lorentz and Sobolev spaces}
	
	For an introduction to Lorentz spaces on more general spaces, we refer to \cite{grafakos2014,bennett1988}. Given a Riemannian manifold $(M^n,g)$, we define the Lorentz space $L^{(p,q)}(M)$ for $p,q\in[1,+\infty]$ as the set of measurable functions $f\colon M\to \R$ such that 
	\begin{align*}
		\|f\|_{L^{(p,q)}(M,g)}^q = \|f\|_{L^{(p,q)}(M)}^q = p \left\| \lambda\mapsto \lambda\, \vol_g\left(\{x\in M: |f(x)|>\lambda \}\right)^{\frac{1}{p}} \right\|_{L^q\left((0,+\infty),\frac{d\lambda}{\lambda}\right)}^q.
	\end{align*}
	The Lorentz spaces verifies the following properties:
	\begin{enumerate}
		\item If $p=q$, then $L^{(p,p)}(M)=L^p(M)$ with 
		\begin{align*}
			\forall f\in L^p(M),\qquad \|f\|_{L^p(M)} = \|f\|_{L^{(p,p)}(M)}.
		\end{align*}
		\item If $q<r$, then $L^{(p,q)}(M) \subsetneq L^{(p,r)}(M)$ with 
		\begin{align*}
			\forall f\in L^{(p,q)}(M),\qquad \|f\|_{L^{(p,r)}(M)} \leq C(p,q,r)\, \|f\|_{L^{(p,q)}(M)}.
		\end{align*}
		\item If $p<q$ and $s,t\in[1,\infty]$ and $\vol_g(M)<+\infty$, then $L^{(q,s)}(M)\subsetneq L^{(p,t)}(M)$ with 
		\begin{align*}
			\forall f\in L^{(q,s)}(M),\qquad \|f\|_{L^{(p,t)}(M)} \leq C(p,q,s,t)\, \vol_g(M)^{\frac{1}{p}-\frac{1}{q}} \, \|f\|_{L^{(q,s)}(M)}.
		\end{align*}
	\end{enumerate}
	We denote $W^{k,(p,q)}(M)$ the space of functions $f\colon M\to\R$ whose first $k^{th}$ derivatives lie in $L^{(p,q)}(M)$. We then have the following Sobolev inequalities on $(\B^n,\geu)$, valid for all $f\in C^{\infty}(\B)$ and $p\in[1,n)$ and $q\in[1,+\infty]$,
	\begin{align*}
		\begin{cases}
			\displaystyle \|f\|_{L^{\left(\frac{np}{n-p},q\right)}(\B)} \leq C(n,p,q)\, \|f\|_{W^{1,(p,q)}(\B)} , \\[3mm]
			\displaystyle \|f\|_{L^{\infty}(\B)} \leq C(n)\, \|f\|_{W^{1,(n,1)}(\B)}.
		\end{cases}
	\end{align*}
	If we denote $\bar{f}\coloneq \fint_{\B} f(x)\, dx$, we also have the estimates
	\begin{align}\label{eq:Sobolev_avg}
		\begin{cases}
			\displaystyle \|f-\bar{f}\|_{L^{\left(\frac{np}{n-p},q\right)}(\B)} \leq C(n,p,q)\, \|\g f\|_{L^{(p,q)}(\B)} , \\[3mm]
			\displaystyle \|f-\bar{f}\|_{L^{\infty}(\B)} \leq C(n)\, \|\g f\|_{L^{(n,1)}(\B)}.
		\end{cases}
	\end{align}
	
	\subsection{Covariant derivatives}
	
	In this section, we fix a metric $g$ on an open set $\Omega\subset \R^n$ for $n\geq 3$. We refer to \cite{petersen2016} for the different definitions and formulas presented here.\\
	
	\paragraph*{Metrics and Levi-Civita connections.}
	Using the Euclidean coordinates on $\Omega$, we view $g$ as a map sending each point $x\in \B$ to a symmetric positive definite matrix $(g_{\alpha\beta}(x))_{1\leq \alpha,\beta\leq n}$ where $g_{\alpha\beta} \coloneq g\left(\frac{\dr}{\dr x_{\alpha}}, \frac{\dr}{\dr x_{\beta}}\right)$. We denote $(g^{\alpha\beta})_{1\leq \alpha,\beta\leq n}$ the inverse matrix of $(g_{\alpha\beta})_{1\leq \alpha,\beta\leq n}$. Given a vector field $X\colon \Omega\to \R^n$, its norm with respect to $g$ will be denoted
	\begin{align*}
		\forall x\in \Omega,\qquad |X|_g(x) \coloneq \left( g_{ij}(x)\, X^i(x)\, X^j(x) \right)^{\frac{1}{2}}.
	\end{align*}
	To a weak metric $g$, we associate its Levi-Civita connection $\g^g$ defined as the unique torsion-free connection verifying
	\begin{align*}
		\forall X,Y\in C^{\infty}(\Omega;\R^n),\qquad d\left(\scal{X}{Y}_g\right) = \scal{\g^g X}{Y}_g + \scal{X}{\g^g Y}_g.
	\end{align*}
	To express $\g^g X$ in coordinates, we introduce the Christoffel symbols
	\begin{align}\label{eq:Christoffel}
		\Gamma_{ij}^k \coloneq \frac{1}{2}\, g^{kl}\left( \dr_i g_{jl} + \dr_j g_{il} - \dr_l g_{ij} \right).
	\end{align}
	In coordinates, we have the expression 
	\begin{align*}
		\g^g_i X = \g^g_{\frac{\dr}{\dr x^i}} X = \left( \frac{\dr X^j}{\dr x^i} + \Gamma^j_{ik}\, X^k \right) \frac{\dr}{\dr x^j}
	\end{align*}

	\paragraph*{Derivatives of differential forms.}
	
	We consider a $k$-form on $\Omega$ given in coordinates by
	\begin{align*}
		\alpha = \sum_{i_1<\cdots<i_k} \alpha_{i_1\cdots i_k}\, dx^{i_1}\wedge \cdots \wedge dx^{i_k} = \frac{1}{k!}\, \alpha_{i_1\cdots i_k}\, dx^{i_1}\wedge \cdots \wedge dx^{i_k} .
	\end{align*}
	The covariant derivatives of $\alpha$ with respect to $g$ are given by
	\begin{align}\label{eq:covariant_form}
		\g^g_j\alpha = \frac{1}{k!} \left( \dr_j \alpha_{i_1\ldots i_k} - \sum_{\ell=1}^k \Gamma^m_{j\, i_{\ell}}\, \alpha_{i_1\cdots i_{\ell-1}\, m\, i_{\ell+1}\cdots i_k} \right)\, dx^{i_1}\wedge \cdots \wedge dx^{i_k}.
	\end{align}
	We denote
	\begin{align*}
		\g_j^g \alpha_{i_1\cdots i_k} \coloneq \dr_j \alpha_{i_1\ldots i_k} - \sum_{\ell=1}^k \Gamma^m_{j\, i_{\ell}}\, \alpha_{i_1\cdots i_{\ell-1}\, m\, i_{\ell+1}\cdots i_k}.
	\end{align*}
    The codifferential of $\alpha$ is given by
    \begin{align}
        d^{*_g}\alpha & = -\frac{1}{k!}\, g^{ml}\, (\g^g_l \alpha_{m\, i_1\cdots i_{k-1}} )\, dx^{i_1}\wedge \cdots \wedge dx^{i_{k-1}} \nonumber \\[2mm]
         & = d^{*_{\geu}}\alpha -\frac{1}{k!}\left[ \left(  g^{ml} - \delta^{ml}\right)\, (\dr_l \alpha_{m\, i_1\cdots i_{k-1}} ) + g^{ml}\, \sum_{\ell=1}^k \Gamma^m_{j\, i_{\ell}}\, \alpha_{i_1\cdots i_{\ell-1}\, m\, i_{\ell+1}\cdots i_k}  \right] dx^{i_1}\wedge \cdots \wedge dx^{i_{k-1}}. \label{eq:codiff_cov}
    \end{align}
	
	\paragraph*{Curvature in a frame.}
	Let $(e_1,\ldots,e_n)$ be an orthonormal frame for $g$, that is to say for each $x\in \Omega$, $(e_1(x),\ldots,e_n(x))$ is a basis of $\R^n$ such that $g(e_i,e_j)=\delta_{ij}$. We consider $(\omega^1,\ldots,\omega^n)$ its dual coframe, that is to say $\omega^i(e_j)=\delta^i_j$. We define the connection forms $\omega_i^j$ as the 1-forms satisfying 
	\begin{align}\label{eq:domegai}
		d\omega^i = \sum_{j=1}^n \omega^j\wedge \omega_i^j.
	\end{align}
	We obtain the formula
	\begin{align}\label{eq:der_frame}
		\g^g e_i = \omega^j_i\otimes e_j.
	\end{align}
	We end up with the formula
	\begin{align}\label{eq:dstar_omegai}
		d^{*_g} \omega^i = \sum_{j=1}^n \left( \g^g_{e_j}\omega^i\right)(e_j) = \sum_{j=1}^n \left( e_j\left(\omega^i(e_j)\right) - \omega^i\left(\g^g_{e_j} e_j\right) \right) = -\sum_{j=1}^n \omega^i_j(e_j).
	\end{align}
	Now, we define the curvature 2-forms $F_i^j$ by
	\begin{align}\label{eq:Curv_frame}
		F_i^j = d\omega^j_i + \sum_{k=1}^n \omega^j_k \wedge \omega^k_i.
	\end{align}
	The Riemann tensor of $g$ and its curvature 2-form are linked by the relations
	\begin{align}\label{eq:curvatures}
		\begin{cases} 
			\displaystyle \Riem^g(e_i,e_j)e_k = \sum_{l=1}^n F_k^l(e_i,e_j)e_l\\[3mm]
			\Riem^g(e_i,e_j,e_k,e_l) = \scal{\Riem^g(e_i,e_j)e_k}{e_l}_g.
		\end{cases} 
	\end{align}
	Using the coordinates, the Riemann tensor can be expressed in terms of the Christoffel symbols defined in \eqref{eq:Christoffel} as
	\begin{align}\label{eq:Riem_coordinates}
		(\Riem^g)_{ijk}^{\ \ \ l} =  \dr_i ({^{g} \Gamma}^l_{jk}) - \dr_j({^{g}\Gamma}^l_{ik}) + ({^{g}\Gamma}^m_{jk})\, ({^{g}\Gamma}^l_{im})-({^{g}\Gamma}^m_{ik})\, ({^{g}\Gamma}^l_{jm}).
	\end{align}
	
	\paragraph*{Laplace--Beltrami operator.}
	Let $(e_1,\ldots,e_n)$ be an orthonormal frame for $g$. Then the Laplace--Beltrami operator for $g$ is defined as follows: for any $f\in C^{\infty}(\Omega)$,
	\begin{align*}
		\lap_g f = \tr_g\left(\Hess_g\, f\right)= \sum_{i=1}^n \left(\g^g_{e_i} \g^g_{e_i} f - \g^g_{\g^g_{e_i} e_i} f \right).
	\end{align*}
	In coordinates, we have 
	\begin{align}\label{eq:Laplacian_coordinates}
		\lap_g f = \frac{1}{\sqrt{\det (g_{ij})}}\, \dr_{\alpha}\left( \sqrt{\det (g_{ij})}\, g^{\alpha\beta}\, \dr_{\beta} f \right).
	\end{align}

	\section{Metrics on $\R^n$ of class $W^{2,\left(\frac{n}{2},1\right)}$}\label{sec:Proofmain}
	
	In order to prove \Cref{th:diff_Rn}, we proceed by a continuity argument reminiscent to the one of Uhlenbeck \cite{uhlenbeck1982} to construct a Coulomb gauge. We first prove in \Cref{sec:Apriori}, that if a metric on $\R^n$ is close to the Euclidean metric, then we can construct a Coulomb frame using the continuity argument of Uhlenbeck \cite{uhlenbeck1982}. Using this frame, we construct the coordinates by solving a well-chosen elliptic system. Finally, we prove \Cref{th:diff_Rn} in \Cref{sec:Continuity} by another continuity argument.

    \begin{remark}\label{rk:Uhlenbeck}
        Uhlenbeck's gauge extraction also works in the setting of connections with curvatures in $L^{\left(\frac{n}{2},1\right)}$. Indeed, the two necessary ingredients for \cite{uhlenbeck1982} holds in this setting. The first one is to know that given a solution $A\in W^{1,(n,1)}$ to the system
        \begin{align*}
            \begin{cases}
                dA  = F - A\wedge A \in L^{\left(\frac{n}{2},1\right)} & \text{ in }\B,\\[2mm]
                d^{*_{\geu}} A = 0  & \text{ in }\B,\\[2mm]
                A(\nu) = 0 & \text{ on } \dr \B,
            \end{cases}
        \end{align*}
        then we have 
        \begin{align*}
            \|A\|_{W^{1,\left(\frac{n}{2},1\right)}(\B)} \leq C(n)\, \|F - A\wedge A\|_{L^{\left(\frac{n}{2},1\right)}(\B)}. 
        \end{align*}
        This is obtained by interpolation and provides an alternative to \cite[Lemma 2.5]{uhlenbeck1982} in this setting. Using that $W^{1,(n,1)}(\B^n)\hookrightarrow C^0(\B^n)$, we can recover Lemma 2.7 in \cite{uhlenbeck1982}. Thus, Theorem 2.1 in \cite{uhlenbeck1982} holds for $\Riem \in L^{\left( \frac{n}{2},1\right)}(\B^n)$.
    \end{remark}
	
	\subsection{A priori estimates for metrics on $\R^n$}\label{sec:Apriori}

	The goal of this section is to prove the most technical part of the continuity argument. Namely, we prove the following result. 
	
	\begin{proposition}\label{pr:apriori_global}
		Let $n\geq 3$ be an integer. There exist $\eps_0>0$ and $C_0>0$ depending only on $n$ such that the following holds for any $\eps\in(0,\eps_0)$. 
		Let $g$ be a weak metric on $\R^n$ such that there exist coordinates on $\R^n$ satisfying $g_{ij}\in \bar{W}^{2,\left(\frac{n}{2},1\right)}(\R^n,\geu)$ such that for any $1\leq i,j,k,l\leq n$, it holds 
		\begin{align}
			& \|g_{ij}-\delta_{ij}\|_{L^{\infty}(\R^n,\geu)} + \|\dr_k g_{ij} \|_{L^{(n,1)}(\R^n,\geu) } + \|\dr^2_{kl} g_{ij} \|_{L^{\left(\frac{n}{2},1\right)}(\R^n,\geu) } \leq \eps, \label{eq:Initial_g1}\\[2mm]
			& \|\Riem^g\|_{L^{\left(\frac{n}{2},1\right)}(\R^n,g)}\leq \eps. \label{eq:Initial_R1}
		\end{align} 
		Then there exist a $C^1$-diffeomorphism $y\in \Diff(\R^n)$ and a coframe $(\omega^1,\ldots,\omega^n)$ for $g$ on $\R^n$ such that 
		\begin{align*}
			& \qquad 1) \qquad dy\in\bar{W}^{2,\left(\frac{n}{2},1\right)}(\R^n), \\[3mm]
			& \qquad 2) \qquad \left\| \left( (y^{-1})^*g \right)_{\alpha\beta} - \delta_{\alpha\beta} \right\|_{\bar{W}^{2,\left(\frac{n}{2},1\right)}(\R^n,\geu)}  \leq C_0\, \|\Riem^g\|_{L^{\left(\frac{n}{2},1\right)}(\R^n,g)}, \\[4mm]
			& \qquad 3) \qquad \| \omega^i_j\|_{L^{(n,1)}(\R^n,g)} + \|\g^g \omega^i_j \|_{L^{\left(\frac{n}{2},1\right)}(\R^n,g)} \leq C_0\, \|\Riem^g\|_{L^{\left(\frac{n}{2},1\right)}(\R^n,g)}, \\[4mm]
			& \qquad 4) \qquad \|dy^i - \omega^i\|_{\bar{W}^{2,\left(\frac{n}{2},1\right)}(\R^n,g)} \leq C_0\, \|\Riem^g\|_{L^{\left(\frac{n}{2},1\right)}(\R^n,g)}.
		\end{align*}
	\end{proposition}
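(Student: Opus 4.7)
The plan is to follow the Uhlenbeck--Coulomb plus harmonic-correction strategy sketched informally in the introduction, splitting the argument into four logical steps. The starting hypothesis \eqref{eq:Initial_g1} gives a qualitative regularity of $g$ that is strong enough to carry out all the analysis (existence of frames, solvability of elliptic systems, validity of chain rules) but \textbf{the goal is to bootstrap every output estimate into something controlled by the smaller quantity} $\|\mathrm{Riem}^g\|_{L^{(n/2,1)}(\R^n,g)}$. The coordinates on $\R^n$ in which $g_{ij}$ is already $\eps$-close to $\delta_{ij}$ will play the role of a "base chart": the diffeomorphism $y$ will be a perturbation of the identity in these coordinates, written $y^i=x^i+u^i$ with $u^i$ small.

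The first step is to construct the $g$-orthonormal coframe $(\omega^1,\ldots,\omega^n)$ satisfying item~(3). I would apply the Uhlenbeck extraction extended to the $L^{(\frac{n}{2},1)}$ setting (\Cref{rk:Uhlenbeck}) to the Levi--Civita connection of $g$, viewed as a $SO_n$-connection on $TM$ with curvature $\Riem^g$; the small-norm hypothesis \eqref{eq:Initial_R1} allows a continuity argument that glues local Coulomb gauges (expressed in the base chart, so that $d^{*_{\geu}}\omega^i_j=0$) into a global coframe on $\R^n$. The Uhlenbeck estimate plus the Sobolev embedding $W^{1,(\frac{n}{2},1)}\hookrightarrow L^{(n,1)}$ then yields exactly (3). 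Because the coframe is $g$-orthonormal and $g$ itself is $\eps$-close to $\geu$, we also get $\omega^i=dx^i+\eta^i$ with $\eta^i$ small in $\bar{W}^{2,(\frac{n}{2},1)}$, a technical byproduct needed later.

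The second step defines $y$. For each $i$, I solve on $\R^n$ the elliptic equation
\begin{align*}
-\lap_g y^i = d^{*_g}\omega^i, \qquad y^i(x)-x^i \to 0 \text{ at infinity.}
\end{align*}
Since $\omega^i$ is close to $dx^i$ and $d^{*_g}$ is close to $d^{*_{\geu}}$ (with the discrepancy measured by Christoffel symbols through \eqref{eq:codiff_cov}), the right-hand side is small, and setting $u^i=y^i-x^i$ converts this into a small perturbation of the Euclidean Laplacian problem, solvable by Calderón--Zygmund theory on $\R^n$ in the Lorentz scale. The crucial computation is then for the 1-form $\alpha^i\coloneq dy^i-\omega^i$: using \eqref{eq:domegai} we have
\begin{align*}
d\alpha^i=-d\omega^i=-\sum_{j}\omega^j\wedge\omega^j_i, \qquad d^{*_g}\alpha^i=-\lap_g y^i-d^{*_g}\omega^i=0,
\end{align*}
so $\alpha^i$ solves a Hodge system whose data are quadratic in $(\omega^j,\omega^j_i)$. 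Because $\omega^j$ is bounded in $L^{\infty}$ and $\omega^j_i$ is controlled by step~1, a Hodge--elliptic bootstrap in Lorentz--Sobolev spaces gives $\|\alpha^i\|_{\bar{W}^{2,(\frac{n}{2},1)}}\lesssim\|\Riem^g\|_{L^{(\frac{n}{2},1)}}$, which is item~(4), and item~(1) is a direct consequence.

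For step three, I derive item~(2) from the identity $g=\sum_i\omega^i\otimes\omega^i$: pulling back by $y^{-1}$ and using $(y^{-1})^*(dy^i)=dx^i$,
\begin{align*}
(y^{-1})^*\omega^i-dx^i=(y^{-1})^*(\omega^i-dy^i),
\end{align*}
so the estimate (4) transfers to $(y^{-1})^*\omega^i-dx^i$ by the chain rule in Lorentz--Sobolev spaces, once it is known that $y^{-1}$ is Lipschitz with derivative close to the identity. This closeness is exactly what allows the final verification: $dy=dx+du$ with $du$ small in $L^{\infty}$ (by the embedding $\bar{W}^{2,(\frac{n}{2},1)}\hookrightarrow W^{1,(n,1)}\hookrightarrow C^0$), so $y$ is a proper $C^1$ local diffeomorphism close to the identity and hence a global $C^1$ diffeomorphism of $\R^n$ by Hadamard. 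Expanding $\sum_i(dx^i+(\omega^i-dx^i)\circ y^{-1})\otimes(\cdots)$ and keeping the cross terms then produces (2). \emph{The main obstacles} I anticipate are (i)~running Uhlenbeck's continuity method globally on $\R^n$ rather than on a ball, which requires a careful exhaustion/compatibility argument with uniform Lorentz estimates, and (ii)~the nonlinear Lorentz--Sobolev product estimates needed for the Hodge bootstrap and the pullback computation, where one must systematically use the improvement of Lorentz exponents under products recalled in the introduction in order to avoid losing a derivative.
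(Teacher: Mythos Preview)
Your strategy is essentially the paper's: Coulomb coframe via Uhlenbeck, solve $\lap_g y^i=d^{*_g}\omega^i$, estimate $dy^i-\omega^i$ through the Hodge system $d(dy^i-\omega^i)=-d\omega^i$, $d^{*_g}(dy^i-\omega^i)=0$, and recover item~(2) from $g=\sum_i\omega^i\otimes\omega^i$. The difference lies entirely in how the analysis is localized. The paper does not attempt to run Uhlenbeck or solve elliptic problems directly on~$\R^n$; instead it proves a uniform local version (\Cref{pr:apriori_local}) on each ball $\B_s$ with Neumann boundary data $\dr_\nu y^{s,i}=\omega^{s,i}(\nu)$ and zero mean, obtains scale-invariant estimates independent of~$s$, and then lets $s\to\infty$. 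This cleanly resolves your anticipated obstacle~(i): rather than gluing local Coulomb gauges, one simply passes to the limit of a single gauge on an exhausting family of balls. Your decay condition $y^i-x^i\to 0$ at infinity and the Hadamard argument are replaced in the paper by a direct bi-Lipschitz estimate (\Cref{cl:y_bilip}) obtained by comparing $dy^{s,i}$ with the constant vector $\overline{e_{s,i}}=\fint_{\B_s}e_{s,i}$ via the Sobolev--Poincar\'e inequality~\eqref{eq:Sobolev_avg}; this avoids any global topological input. Both routes are valid, but the exhaustion-by-balls approach keeps every elliptic estimate in a standard bounded-domain setting and sidesteps the question of solvability with decay on~$\R^n$.
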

	
	In order to prove Proposition \ref{pr:apriori_global}, we first prove the following localized version on a ball $\B_s$ for some $s>0$ fixed, which might have an independent interest.
	\begin{proposition}\label{pr:apriori_local}
		There exist $\eps_0>0$ and $C_0>0$ depending only on $n$ such that the following holds for any $\eps\in(0,\eps_0)$. 
		Let $s>0$ and $g$ be a weak metric on $\B^n_s$ such that $g_{ij}\in W^{2,\left(\frac{n}{2},1\right)}(\B_s,\geu)$ and for any $1\leq i,j,k,l\leq n$, it holds 
		\begin{align}
			& \|g_{ij}-\delta_{ij}\|_{L^{\infty}(\B_s,\geu)} + \|\dr_k g_{ij} \|_{L^{(n,1)}(\B_s,\geu) } + \|\dr^2_{kl} g_{ij}\|_{L^{\left(\frac{n}{2},1\right)}(\B_s,\geu) } \leq \eps, \label{eq:Initial_g}\\[2mm]
			& \|\Riem^g\|_{L^{\left(\frac{n}{2},1\right)}(\B_s,g)}\leq \eps. \label{eq:Initial_R}
		\end{align} 
		Then there exist a coframe $(\omega^{s,1},\ldots,\omega^{s,n})$ for $g$ on $\B_s$ of class $\bar{W}^{2,\left(\frac{n}{2},1\right)}(\B_s)$ such that its connection forms verifies $d^{*_{\geu}}\omega^{s,i}_{\ j}=0$ in $\B_s$ with $\omega^{s,i}_{\ j}(\nu)=0$ on $\dr\B_s$, together with a $C^1$-diffeomorphism $y_s\colon \B_s \to y_s(\B_s)$ such that 
		\begin{align*}
			& \|dy^{s,i}-\omega^{s,i}\|_{L^{\infty}(\B_s,\geu)} + \left\|\g^{\geu}\left( dy^{s,i}-\omega^{s,i} \right) \right\|_{L^{\left(n,1\right)}(\B_s,\geu)} + \left\|(\g^{\geu})^2 \left( dy^{s,i}-\omega^{s,i} \right) \right\|_{L^{\left(\frac{n}{2},1\right)}(\B_s,\geu)}  \\[2mm]
			& \leq C_0\, \|\Riem^g\|_{L^{\left(\frac{n}{2},1\right)}(\B_s,g)}. 
		\end{align*}
		We also have
		\begin{align*}
			\|\g^{\geu} \omega^{s,i} \|_{L^{(n,1)}(\B_s,\geu)} \leq C_0\, \eps.
		\end{align*}
		The map	$y^s\colon \B_s\to y^s(\B_s)$ is moreover uniformly bi-Lipschitz (the bi-Lipschitz constant is independent of $s$):
		\begin{align*}
			\forall a,b\in \B_s,\qquad \frac{|a-b|}{100} \leq | y^s(a) - y^s(b)| \leq 100\, |a-b|.
		\end{align*}
	\end{proposition}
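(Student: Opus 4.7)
The plan is to separate the construction into two steps: first produce a well-controlled $g$-orthonormal coframe on $\B_s$, and then solve an elliptic system generating coordinates whose differentials are Lorentz--Sobolev close to that coframe. For the first step, I invoke the Lorentz-space variant of Uhlenbeck's gauge extraction described in \Cref{rk:Uhlenbeck}. Under the smallness \eqref{eq:Initial_R} and the closeness \eqref{eq:Initial_g} of $g$ to $\geu$, Uhlenbeck's continuity method executed in the Lorentz--Sobolev scale produces a $g$-orthonormal coframe $(\omega^{s,1},\ldots,\omega^{s,n})$ on $\B_s$ whose connection forms $\omega^{s,i}_{\ j}$ are Coulomb for the Euclidean metric, vanish in the normal direction on $\dr\B_s$, and obey
\[
\|\omega^{s,i}_{\ j}\|_{L^{(n,1)}(\B_s,\geu)}+\|\g^\geu\omega^{s,i}_{\ j}\|_{L^{\left(\frac{n}{2},1\right)}(\B_s,\geu)}\leq C\,\|\Riem^g\|_{L^{\left(\frac{n}{2},1\right)}(\B_s,g)}.
\]
The bound $\|\g^\geu\omega^{s,i}\|_{L^{(n,1)}(\B_s,\geu)}\leq C\,\eps$ then follows by integrating the structure equation \eqref{eq:domegai} for $d\omega^{s,i}$ against a Cartesian reference coframe and invoking \eqref{eq:Initial_g}.

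For the second step, I solve for each $i\in\{1,\ldots,n\}$ the Neumann problem
\[
\begin{cases}
-\lap_g y^{s,i}=d^{*_g}\omega^{s,i} & \text{in }\B_s,\\[1mm]
\dr_\nu y^{s,i}=\omega^{s,i}(\nu) & \text{on }\dr\B_s,\\[1mm]
\fint_{\B_s} y^{s,i}\,dx=0,
\end{cases}
\]
whose compatibility follows from Stokes's theorem applied to $d^{*_g}\omega^{s,i}\,d\vol_g$. Setting $\eta^{s,i}\coloneqq dy^{s,i}-\omega^{s,i}$, the PDE gives $d^{*_g}\eta^{s,i}=0$, the closedness of $dy^{s,i}$ gives $d\eta^{s,i}=-d\omega^{s,i}$ (controlled via \eqref{eq:domegai} and Step 1), and the boundary condition gives $\eta^{s,i}(\nu)=0$. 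Rewriting $d^{*_g}=d^{*_\geu}+P$, where $P$ collects the $\eps$-small perturbation terms produced by \eqref{eq:codiff_cov}, I obtain an Euclidean Hodge system for $\eta^{s,i}$ on $\B_s$ with Neumann-type boundary condition and right-hand sides controlled by $\|\Riem^g\|_{L^{\left(\frac{n}{2},1\right)}(\B_s,g)}$. Standard Lorentz--Sobolev Hodge decomposition estimates on the ball, combined with a fixed-point argument in which the perturbation $P\eta^{s,i}$ is absorbed by making $\eps_0$ sufficiently small, then deliver
\[
\|\eta^{s,i}\|_{L^\infty(\B_s,\geu)}+\|\g^\geu\eta^{s,i}\|_{L^{(n,1)}(\B_s,\geu)}+\|(\g^\geu)^2\eta^{s,i}\|_{L^{\left(\frac{n}{2},1\right)}(\B_s,\geu)}\leq C\,\|\Riem^g\|_{L^{\left(\frac{n}{2},1\right)}(\B_s,g)}.
\]

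Once $\eps_0$ is chosen small enough that the first estimate above guarantees $\|dy^{s,i}-\omega^{s,i}\|_{L^\infty}\leq 1/200$, the uniform bi-Lipschitz statement is immediate: because $(\omega^{s,i}_\alpha)$ is pointwise orthonormal with respect to $g$ and $g$ is within $\eps$ of $\geu$, the Jacobian matrix $(\dr_\alpha y^{s,i})$ remains within $1/100$ of an orthogonal matrix uniformly on $\B_s$, and the claimed $1/100$--$100$ bi-Lipschitz bounds follow by integrating along Euclidean segments. The main obstacle I expect is in Step 2: carrying out the Hodge/elliptic estimates in Lorentz spaces under Neumann-type boundary conditions with constants independent of both $s$ and of the fine properties of $g$. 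Scale invariance of the $\bar{W}^{2,\left(\frac{n}{2},1\right)}$ norm is what rescues uniformity in $s$: rescaling $\B_s$ to $\B_1$ leaves both \eqref{eq:Initial_g} and \eqref{eq:Initial_R} unchanged with the same $\eps$, so estimates on $\B_1$ transfer verbatim to $\B_s$.
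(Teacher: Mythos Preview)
Your overall strategy matches the paper's proof closely: Uhlenbeck gauge extraction in the Lorentz scale for the Coulomb coframe, the Neumann problem for $y^{s,i}$, and Hodge-type estimates on $\eta^{s,i}=dy^{s,i}-\omega^{s,i}$ with absorption of the perturbation coming from \eqref{eq:codiff_cov}. The paper carries out that last step via an explicit Euclidean Hodge decomposition $dy^i-\omega^i=d\alpha+d^{*_{\geu}}\beta+h$ on $\B_1$ (showing $h=0$, bounding $\beta$ by $\|d\omega^i\|$ and $\alpha$ by the $\eps$-small codifferential error), which is the concrete realisation of your ``fixed-point'' absorption; the scale-invariance remark you make is exactly how the paper transfers the $\B_1$ estimates to $\B_s$.

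There is, however, a genuine gap in your bi-Lipschitz argument. From ``$(\omega^{s,i}_\alpha)$ is $g$-orthonormal and $g$ is within $\eps$ of $\geu$'' you only get that the Jacobian $(\dr_\alpha y^{s,i})$ is pointwise within $1/100$ of \emph{some} orthogonal matrix, possibly varying with the point. Integrating along a segment then gives the upper Lipschitz bound, but not the lower one: the nearby orthogonal matrix may rotate along the segment and produce cancellation in $\int_0^1 dy((1-t)b+ta)(a-b)\,dt$. The paper's own footnote flags precisely this danger (the map $z\mapsto z^2/|z|$ in dimension $2$ has Jacobian pointwise orthogonal a.e.\ yet is not injective). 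What is needed, and what you have the ingredients for but do not use, is closeness to a \emph{fixed} linear map. The paper combines your bound $\|\g^{\geu}\omega^{s,i}\|_{L^{(n,1)}}\le C\eps$ with the Poincar\'e--Sobolev embedding \eqref{eq:Sobolev_avg} to conclude that the dual frame vectors $e_{s,i}$ are uniformly within $C\eps$ of their averages $\overline{e_{s,i}}$; then $dy^s$ is $L^\infty$-close to the constant matrix with rows $\overline{e_{s,i}}$, and integrating along segments now legitimately yields both Lipschitz inequalities with the stated constants.
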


	\begin{proof} 
		If $\eps_0>0$ is small enough, we obtain from \eqref{eq:Initial_g} the pointwise inequality between matrices
		\begin{align}\label{eq:Pointwise_g}
			\frac{1}{2}\, \delta_{ij} \leq g_{ij} \leq 2\, \delta_{ij} \qquad \text{ on }\R^n.
		\end{align}
		Therefore, we obtain the estimate
		\begin{align}\label{eq:comparison_Riem}
			\|\Riem^g\|_{L^{\left(\frac{n}{2},1\right)}(\B_s,\geu)} \leq C(n)\, \|\Riem^g\|_{L^{\left(\frac{n}{2},1\right)}(\B_s,g)}\leq C(n)\, \eps.
		\end{align}
		Let $(\alpha^1,\ldots,\alpha^n)$ be the coframe for $g$ obtain from the basis $(dx^1,\ldots,dx^n)$ by a Gram--Schmidt process. Since the coefficients $g_{ij}$ lie in $\bar{W}^{2,\left(\frac{n}{2},1\right)}$, the 1-forms $\alpha^i$ also lie in $\bar{W}^{2,\left(\frac{n}{2},1\right)}$. Then the curvature of $g$ is given by \eqref{eq:Curv_frame} using the connections forms $\alpha^i_j$ defined through the structure equations $d\alpha^i = \alpha^i_j\wedge \alpha^j$ and we have by \eqref{eq:curvatures} that
		\begin{align*}
			\left\| d\alpha^i_j + \alpha^i_k\wedge \alpha^k_j \right\|_{L^{\left(\frac{n}{2},1\right)}(\B_s,\geu)} \leq C(n)\, \eps_0.
		\end{align*}
		If $\eps_0>0$ is small enough (depending only on $n$), we can apply \cite[Theorem 1.3]{uhlenbeck1982} (see Remark \ref{rk:Uhlenbeck}) and we obtain a frame $\left( e_{s,1},\ldots,e_{s,n}\right)$ with an associated coframe $\left(\omega^{s,1},\ldots,\omega^{s,n}\right)$ satisfying the system
        \begin{align}
            & d\omega^{s,i} = \omega^{s,j}\wedge \omega^{s,i}_{\ j},  \label{eq:Structure}\\[2mm]
			& d^{*_{\geu}}\omega^{s,i}_{\ j} = 0 \qquad \text{in }\B_s,  \nonumber \\[2mm]
			& \omega^{s,i}_{\ j}(\nu) = 0 \qquad \text{on }\dr \B_s. \nonumber
        \end{align}
        Moreover, we have the following estimate (using the scale invariance of the involved Sobolev norms\footnote{We can apply Uhlenbeck result to the coframe $(\omega^{s,1}(s\cdot),\ldots,\omega^{s,n}(s\cdot))$ which is orthonormal for the metric $g(s\cdot)$ on $\B_1$. The connection form is then given by $s\, \omega^{s,i}_{\ j}(s\cdot)$. Hence, we have $\| s\, \omega^{s,i}_{\ j}(s\cdot) \|_{L^{(n,1)}(\B_1,\geu)} = \|\omega^{s,i}_{\ j}\|_{L^{(n,1)}(\B_s,\geu)}$ and $\left\| \g^{\geu}\left( s\, \omega^{s,i}_{\ j}(s\cdot) \right) \right\|_{L^{\left( \frac{n}{2},1\right)}(\B_1,\geu)} = \|\g^{\geu}\omega^{s,i}_{\ j}\|_{L^{\left(\frac{n}{2},1\right)}(\B_s,\geu)}$.})
		\begin{align*}
			& \sum_{1\leq i,j\leq n} \left( \|\omega^{s,i}_{\ j}\|_{L^{(n,1)}(\B_s,\geu)} + \|\g^{\geu} \omega^{s,i}_{\ j}\|_{L^{\left(\frac{n}{2},1\right)}(\B_s,\geu)} \right) \leq C(n)\, \|d\omega^{s,i}_{\ j} + \omega^{s,i}_{\ k} \wedge \omega^{s,k}_{\ j}\|_{L^{\left(\frac{n}{2},1\right)}(\B_s,\geu)}. 
		\end{align*}
		By \eqref{eq:comparison_Riem}, the above estimate provides the following.
		\begin{align}\label{eq:Omegaij}
			\sum_{1\leq i,j\leq n} \left( \|\omega^{s,i}_{\ j}\|_{L^{(n,1)}(\B_s,\geu)} + \|\g^{\geu} \omega^{s,i}_{\ j}\|_{L^{\left(\frac{n}{2},1\right)}(\B_s,\geu)} \right) \leq C(n)\, \|\Riem^g\|_{L^{\left(\frac{n}{2},1\right)}(\B_s,g)}. 
		\end{align}
		We also deduce from \eqref{eq:Pointwise_g} and the equalities $g(\omega^{s,i},\omega^{s,i})=1$, that the following pointwise inequalities hold a.e.
		\begin{align}\label{eq:Pointwise_frame}
			\frac{1}{2} \leq |\omega^{s,i}|_{\geu} \leq 2.
		\end{align}
        We now prove that $\g^{\geu}\omega^{s,i}$ is controlled by $\eps$ and that $d\omega^{s,i}$ is controlled by $\|\Riem^g\|_{L^{\left(\frac{n}{2},1\right)}(\B_s,g)}$.
        
		\begin{claim}
			Up to reducing $\eps_0$ depending only on $n$, it holds 
			\begin{align}\label{eq:Est_der_omegai}
				\|d\omega^{s,i}\|_{L^{(n,1)}(\B_s,\geu)} + \|\g^{\geu}(d\omega^{s,i})\|_{L^{\left( \frac{n}{2},1\right)}(\B_s,\geu)} \leq C(n)\, \|\Riem^g\|_{L^{\left(\frac{n}{2},1\right)}(\B_s,g)}.
			\end{align}
			We also have 
			\begin{align}\label{eq:Der_frame}
				\|\g^{\geu} \omega^{s,i} \|_{L^{(n,1)}(\B_s,\geu)} + \left\| (\g^{\geu})^2 \omega^{s,i} \right\|_{L^{\left(\frac{n}{2},1\right)}(\B_s,\geu)} \leq C(n)\, \eps.
			\end{align}
		\end{claim}
		
		\begin{proof} 
			We estimate the exterior derivative $d\omega^{s,i}$ using \eqref{eq:Pointwise_frame} and \eqref{eq:Structure}
			\begin{align}\label{eq:Domegai0}
				\|d\omega^{s,i}\|_{L^{(n,1)}(\B_s,\geu)} \leq \sum_j\|\omega^{s,i}_{\ j}\|_{L^{(n,1)}(\B_s,\geu)}\, \|\omega^{s,j}\|_{L^{\infty}(\B_s,\geu)} \leq  C(n)\, \|\Riem^g\|_{L^{\left(\frac{n}{2},1\right)}(\B_s,g)}.
			\end{align}
			By differentiating \eqref{eq:Structure}, its derivatives verifies
			\begin{align} 
				& \|\g^{\geu} (d\omega^{s,i})\|_{L^{\left( \frac{n}{2},1 \right)}(\B_s,\geu)}  \nonumber \\[2mm]
				& \leq \sum_j \left( \|\g^{\geu} \omega^{s,i}_{\ j}\|_{L^{\left(\frac{n}{2},1\right)}(\B_s,\geu)}\, \|\omega^{s,j}\|_{L^{\infty}(\B_s,\geu)} + \|\omega^{s,i}_{\ j}\|_{L^{(n,2)}(\B_s,\geu)}\, \|\g^{\geu} \omega^{s,j} \|_{L^{(n,2)}(\B_s,\geu)} \right) \nonumber \\[2mm]
				& \leq C\, \|\Riem^g\|_{L^{\left(\frac{n}{2},1\right)}(\B_s,g)}\left(1 + \sum_j \|\g^{\geu} \omega^{s,j}\|_{L^{(n,2)}(\B_s,\geu)} \right). \label{eq:Domegai}
			\end{align}
			Concerning the last term, \eqref{eq:Initial_g} together with \eqref{eq:covariant_form} and \eqref{eq:Pointwise_frame} imply that 
			\begin{align*}  
				\|\g^{\geu} \omega^{s,j}\|_{L^{(n,1)}(\B_s,\geu)} \leq C(n)\, \eps + C(n)\, \|\g^g \omega^{s,j}\|_{L^{(n,1)}(\B_s,\geu)}.
			\end{align*}
			Moreover, it holds $\g^g\omega^i = -\omega^i_j\otimes \omega^j$ thanks to the following computation
			\begin{align}
				(\g^g_{e_k} \omega^i) (e_j) & = e_k\left(\omega^i(e_j)\right) - \omega^i(\g^g_{e_k} e_j) \nonumber \\[2mm]
				& = - \scal{\g^g_{e_k} e_j}{e_i} \nonumber \\[2mm]
				& = - \omega^i_l(e_k)\, \omega^l(e_i). \label{eq:gomegai}
			\end{align}
			Hence, we obtain 
			\begin{align*}
				\|\g^{\geu} \omega^{s,j}\|_{L^{(n,1)}(\B_s,\geu)} \leq C(n)\, \eps + C(n)\, \sum_{1\leq k,l\leq n} \| \omega^{s,k}_{\ l}\|_{L^{(n,1)}(\B_s,\geu)}\, \|\omega^{s,l}\|_{L^{\infty}(\B_s,\geu)}.
			\end{align*}
			Thanks to \eqref{eq:Pointwise_frame} and \eqref{eq:Omegaij}, we obtain 
			\begin{align*}
				\|\g^{\geu} \omega^{s,j}\|_{L^{(n,1)}(\B_s,\geu)} \leq C(n)\, \eps.
			\end{align*}
			Plugging this into \eqref{eq:Domegai}, we obtain 
			\begin{align}
				\|\g^{\geu} (d\omega^{s,i})\|_{L^{\left( \frac{n}{2},1 \right)}(\B_s,\geu)} \leq C\, \|\Riem^g\|_{L^{\left(\frac{n}{2},1\right)}(\B,g)}. \label{eq:Domegai1}
			\end{align}
			Differentiating once more \eqref{eq:gomegai}, we have
			\begin{align*}
				\|(\g^{\geu})^2 \omega^{s,j}\|_{L^{\left( \frac{n}{2},1\right)}(\B_s,\geu)} & \leq C(n)\, \eps + \sum_{1\leq k,l\leq n} \| \omega^{s,k}_{\ l}\|_{\bar{W}^{1,\left(\frac{n}{2},1\right)}(\B_s,\geu)}\, \|\omega^{s,l}\|_{\bar{W}^{1,(n,1)}(\B_s,\geu)} \\[2mm]
				& \leq C(n)\, \eps.
			\end{align*}
		\end{proof}
		
		Given $1\leq i\leq n$, we define $y^{s,i}\colon \B_s\to \R$ to be the solution to\footnote{The compatibility condition is verified by integration by parts:
			\begin{align}
				\int_{\B} (d^{*_g} \omega^{s,i})\, d\vol_g = \int_{\B} (\lap_g y^{s,i})\, d\vol_g = \int_{\dr \B} \omega^{s,i}(\nu)\, d\vol_g.
			\end{align}
		}
		\begin{align*}
			\begin{cases}
				\displaystyle \lap_g y^{s,i} = d^{*_g} \omega^{s,i} & \text{ in }\B_s,\\[2mm]
				\dr_{\nu} y^{s,i} = \omega^{s,i}(\nu) & \text{ on }\dr \B_s,\\[2mm]
				\displaystyle \int_{\B_s} y^{s,i} = 0.
			\end{cases}
		\end{align*}
		To estimate $dy^{s,i}$, we formulate the above system as a system on the differential form $dy^{s,i} - \omega^{s,i}$. It satisfies the following system
        \begin{align}\label{eq:Syst_y_omega}
			\begin{cases}
				d\left(dy^{s,i} - \omega^{s,i}\right) = -d\omega^{s,i} & \text{in }\B_s,\\[2mm]
				\displaystyle d^{*_g}\left( dy^{s,i} - \omega^{s,i} \right) = 0 & \text{in }\B_s,\\[2mm]
				\left(dy^{s,i}-\omega^{s,i}\right)(\nu) = 0 & \text{on }\dr\B_s.
			\end{cases}
		\end{align}
		We obtain the following estimates by Hodge decomposition.
		\begin{claim}
			For any $s>0$ and $i\in\{1,\ldots,n\}$, it holds 
			\begin{align}
				& \|dy^{s,i}-\omega^{s,i}\|_{L^{\infty}(\B_s,\geu)} + \left\|\g^{\geu}\left( dy^{s,i}-\omega^{s,i} \right) \right\|_{L^{\left(n,1\right)}(\B_s,\geu)} + \left\|(\g^{\geu})^2 \left( dy^{s,i}-\omega^{s,i} \right) \right\|_{L^{\left(\frac{n}{2},1\right)}(\B_s,\geu)} \nonumber \\[2mm]
				& \leq C(n)\, \|\Riem^g\|_{L^{\left(\frac{n}{2},1\right)}(\B_s,g)}. \label{eq:Unif_dy}
			\end{align}
		\end{claim}
		
		\begin{proof} 
			For simplicity, we consider the case $s=1$ and drop the index $s$. The result is obtained by change of scale with $g\leftrightarrow g(s\cdot)$, $\omega^i \leftrightarrow \omega^{s,i}(s\cdot)$ and $y^i \leftrightarrow s^{-1} y^i(s\cdot)$. In this setting, the norms $W^{2,\left(\frac{n}{2},1\right)}(\B^n)$ and $\bar{W}^{2,\left(\frac{n}{2},1\right)}(\B^n)$ are equivalent. However, after the rescaling, powers of $s$ arise in the norm $W^{2,\left(\frac{n}{2},1\right)}(\B_s)$, but not in the norm $\bar{W}^{2,\left(\frac{n}{2},1\right)}(\B_s)$. This observation provides the constant independent of $s$ in \eqref{eq:Unif_dy}. Indeed, we have 
            \begin{align*}
                & \left\| d\left( s^{-1}y^{s,i}(s\cdot) \right) - \omega^{s,i}(s\cdot) \right\|_{W^{2,\left(\frac{n}{2},1\right)}(\B^n)} \\[2mm]
                & = s^{-2}\|dy^{s,i}-\omega^{s,i}\|_{L^{\left(\frac{n}{2},1\right)}(\B_s)} + s^{-1}\|\g^{\geu}\left( dy^{s,i}-\omega^{s,i}\right) \|_{L^{\left(\frac{n}{2},1\right)}(\B_s)} + \|(\g^{\geu})^2\left( dy^{s,i}-\omega^{s,i}\right) \|_{L^{\left(\frac{n}{2},1\right)}(\B_s)}.
            \end{align*}
			However, we have
            \begin{align*}
                 \left\| d\left( s^{-1}y^{s,i}(s\cdot) \right) - \omega^{s,i}(s\cdot) \right\|_{\bar{W}^{2,\left(\frac{n}{2},1\right)}(\B^n)} =  \left\| dy^{s,i}- \omega^{s,i} \right\|_{\bar{W}^{2,\left(\frac{n}{2},1\right)}(\B^n_s)}.
            \end{align*}
            
			We consider the Hodge decomposition of $dy^i - \omega^i$ with respect to the Euclidean metric. There exists $\alpha\in W^{2,(n,1)}(\B)$, $\beta\in W^{2,(n,1)}(\B;\Lambda^2\R^n)$ such that $d\beta=0$ and $h\in W^{1,(n,1)}(\B;\Lambda^1\R^n)$ such that (see for instance \cite[Theorem 6.9 (i)]{csato2012}\footnote{We obtain this decomposition as follows. We first define $\alpha\colon \B\to \R$ as the solution to
				\begin{align*}
					\begin{cases}
						\lap_{\geu}\alpha = d^{*_{\geu}} (\omega^i-dy^i) & \text{ in }\B,\\
						\dr_{\nu}\alpha = 0 & \text{ on }\s^{n-1},\\
						\int_{\B} \alpha=0.
					\end{cases}
				\end{align*}
			    We obtain the estimate by standard elliptic regularity. Then we define $\beta\colon \B\to \Lambda^2\R^n$ to be the solution to 
				\begin{align*}
					\begin{cases}
						\lap_{\geu}\beta = d(\omega^i-dy^i) & \text{ in }\B,\\
						d\beta=0 & \text{ in }\B,\\
						\beta\llcorner \nu = 0 & \text{ on }\s^{n-1},\\
						(d^{*_{\geu}}\beta)\llcorner \nu = 0 & \text{ on }\s^{n-1}.
					\end{cases}
				\end{align*}
				We obtain $h \coloneq dy^i - \omega^i - d\alpha- d^{*_{\geu}}\beta $ which satisfies $dh=0$, $d^{*_{\geu}}h=0$ and $h\llcorner\nu = (dy^i-\omega^i)\llcorner \nu$.
			})
			\begin{align*}
				dy^i - \omega^i = d\alpha + d^{*_{\geu}}\beta + h \qquad \text{ in }\B.
			\end{align*}
			Moreover, $h$ is harmonic with homogeneous Neumann boundary condition: $h(\nu)=0$ on $\dr\B$, $dh=0$ and $d^{*_{\geu}}h=0$ in $\B$. The following estimates are verified:
			\begin{align*}
				\begin{cases}
					\|\beta\|_{W^{3,\left(\frac{n}{2},1\right)}(\B,\geu)} \leq C(n)\, \|d (\omega^i-dy^i)\|_{W^{1,\left(\frac{n}{2},1\right)}(\B,\geu)}, \\[2mm]
					\|\alpha\|_{W^{3,\left(\frac{n}{2},1\right)}(\B,\geu)}  \leq C(n)\, \|d^{*_{\geu}} (\omega^i-dy^i)\|_{W^{1,\left(\frac{n}{2},1\right)}(\B,\geu)}.
				\end{cases}
			\end{align*}

			\textit{Step 1: $h=0$.} \\
			Since $dh=0$, we can write $h=d\tilde{h}$, where $h$ is a harmonic function such that $\dr_{\nu}h=0$ on $\dr\B$. Hence $\tilde{h}$ is constant and $h=0$. \\
			
			\textit{Step 2: Estimate of $\beta$.}\\
			Concerning $\beta$, we have, using \eqref{eq:Est_der_omegai}, that
			\begin{align*}
				\|\beta\|_{W^{3,\left(\frac{n}{2},1\right)}(\B,\geu)} \leq C(n)\, \|d\omega^i\|_{W^{1,\left(\frac{n}{2},1\right)}(\B,\geu)} \leq C(n)\, \|\Riem^g\|_{L^{\left(\frac{n}{2},1\right)}(\B,g)}.
			\end{align*}
			
			\textit{Step 3: Estimate of $\alpha$.}\\
			Concerning $\alpha$, we have
			\begin{align*}
				\|\alpha\|_{W^{3,\left(\frac{n}{2},1\right)}(\B,\geu)} & \leq C(n)\, \|d^{*_{\geu}} (\omega^i-dy^i)\|_{W^{1,\left(\frac{n}{2},1\right)}(\B,\geu)}.
			\end{align*}
			Thanks to \eqref{eq:codiff_cov}, we have the pointwise estimates
            \begin{align}
                & \left| d^{*_{\geu}} (\omega^i-dy^i) -d^{*_g}(\omega^i-dy^i) \right|_{\geu} \nonumber \\[2mm]
                &\leq C(n)\, \sum_{\alpha,\beta,\gamma} \left(  \left|g^{\alpha\beta} - \delta^{\alpha\beta} \right|\, \left| \dr_{\gamma}(\omega^i-dy^i)\right|_{\geu} + |\Gamma_{\alpha\beta}^{\gamma}|\, |\omega^i-dy^i|_{\geu} \right) .\label{eq:domega_dy}
            \end{align}
            Differentiating once more, we have
            \begin{align}\label{eq:domega_dy2}
            \begin{aligned}
                & \left| \dr_j\left( d^{*_{\geu}} (\omega^i-dy^i) -d^{*_g}(\omega^i-dy^i) \right) \right|_{\geu} \\[2mm]
                &\leq C(n)\, \sum_{\alpha,\beta,\gamma} \Big(  \left|\dr_j g^{\alpha\beta} \right|\, \left| \dr_{\gamma}(\omega^i-dy^i)\right|_{\geu} 
                + \left|g^{\alpha\beta} - \delta^{\alpha\beta} \right|\, \left| \dr^2_{j\gamma}(\omega^i-dy^i)\right|_{\geu} \\[2mm]
                &\qquad  + |\dr_j \Gamma_{\alpha\beta}^{\gamma}|\, |\omega^i-dy^i|_{\geu}
                +  |\Gamma_{\alpha\beta}^{\gamma}|\, |\dr_j(\omega^i-dy^i)|_{\geu}
                \Big) .
                \end{aligned}
            \end{align}
            By  \eqref{eq:Initial_g} and \eqref{eq:Christoffel}, we deduce from \eqref{eq:domega_dy}
            \begin{align*}
                & \left| d^{*_{\geu}} (\omega^i-dy^i) -d^{*_g}(\omega^i-dy^i) \right|_{\geu} \\[2mm]
                &\leq C(n)\, \sum_{\alpha,\beta,\gamma} \left(  \left|g^{\alpha\beta} - \delta^{\alpha\beta} \right|\, \left| \dr_{\gamma}(\omega^i-dy^i)\right|_{\geu} + |\dr_{\gamma} g_{\alpha\beta} |\, |\omega^i-dy^i|_{\geu} \right) .
            \end{align*}
            With one more derivative, we obtain from \eqref{eq:domega_dy2}
            \begin{align*}
                & \left| \dr_j\left( d^{*_{\geu}} (\omega^i-dy^i) -d^{*_g}(\omega^i-dy^i) \right) \right|_{\geu} \\[2mm]
                &\leq C(n)\, \sum_{\alpha,\beta,\gamma} \Big(  \left|\dr_j g_{\alpha\beta} \right|\, \left| \dr_{\gamma}(\omega^i-dy^i)\right|_{\geu} 
                + \left|g^{\alpha\beta} - \delta^{\alpha\beta} \right|\, \left| \dr^2_{j\gamma}(\omega^i-dy^i)\right|_{\geu}  + |\dr^2_{j\gamma} g_{\alpha\beta}|\, |\omega^i-dy^i| \Big) .
            \end{align*}
            Using \eqref{eq:Initial_g}, we obtain
			\begin{align*}
				\|\alpha\|_{W^{3,\left(\frac{n}{2},1\right)}(\B,\geu)} & \leq C(n)\, \|d^{*_g} (\omega^i-dy^i)\|_{W^{1,\left(\frac{n}{2},1\right)}(\B,\geu)} + C(n)\, \eps_0\, \|\omega^i - dy^i\|_{W^{1,\left(\frac{n}{2},1\right)}(\B,\geu)}.
			\end{align*}
			By \eqref{eq:Syst_y_omega}, the first term vanishes and we obtain 
			\begin{align*}
				\|\alpha\|_{W^{3,\left(\frac{n}{2},1\right)}(\B,\geu)} & \leq C(n)\, \eps_0\, \|\omega^i - dy^i\|_{W^{2,\left(\frac{n}{2},1\right)}(\B,\geu)}.
			\end{align*}
			
			\textit{Step 4: Conclusion.}\\
			Combining Steps 1 to 3, we obtain
			\begin{align*}
				\|dy^i-\omega^i\|_{W^{2,\left(\frac{n}{2},1\right)}(\B,\geu)}\leq C(n)\, \|\Riem^g \|_{L^{\left(\frac{n}{2},1\right)}(\B,g)} + C(n)\, \eps_0\, \|\omega^i - dy^i\|_{W^{2,\left(\frac{n}{2},1\right)}(\B,\geu)}.
			\end{align*}
			Up to reducing $\eps_0$, we have $C(n)\, \eps_0<\frac{1}{2}$, and thus
			\begin{align*}
				\|dy^i-\omega^i\|_{W^{2,\left(\frac{n}{2},1\right)}(\B,\geu)}\leq C(n)\, \|\Riem^g \|_{L^{\left(\frac{n}{2},1\right)}(\B,g)}.
			\end{align*}
			Using the Sobolev injection $W^{2,\left(
				\frac{n}{2},1\right)}(\B,\geu)\hookrightarrow W^{1,(n,1)}(\B,\geu) \hookrightarrow L^{\infty}(\B)$, we obtain 
			\begin{align*}
				\| dy^i - \omega^i\|_{L^{\infty}(\B,\geu)} + \left\| \g^{\geu}\left( dy^i - \omega^i \right) \right\|_{L^{(n,1)}(\B,\geu)} & \leq C(n)\, \|dy^i-\omega^i\|_{W^{2,\left(\frac{n}{2},1\right)}(\B,\geu)} \\[2mm]
				& \leq C(n)\, \|\Riem^g\|_{L^{\left(\frac{n}{2},1\right)}(\B,g)}.
			\end{align*}
		\end{proof}

		For $\eps_0>0$ small enough, we obtain by \eqref{eq:Initial_R} and \eqref{eq:Unif_dy} that $(dy^{s,1},\ldots,dy^{s,n})$ is a basis of 1-forms. Hence the map $y^s\coloneq (y^{s,1},\ldots,y^{s,n})\colon \B_s\to \R^n$ is an immersion.\footnote{Here we use the injection $W^{1,(n,1)}(\B^n)\hookrightarrow C^0(\B^n)$, rather than just the $L^{\infty}$ estimate of \eqref{eq:Unif_dy}. Indeed, the map $dy^s$ is close in the $C^0$ topology to a given continuous coframe. This crucial information allows the use of the inverse function theorem, which implies that $y^s$ is indeed a $C^1$-immersion. This would be false if we had only the Lipschitz regularity. In dimension $n=2$, the map $z\mapsto z^2/|z|$ is a counterexample, see for instance \cite{Lan2025,Plot}.} 
        
		\begin{claim}\label{cl:y_bilip}
			Up to reducing $\eps_0$ (depending only on $n$), the maps $y^s\colon \B_s\to y^s(\B_s)$ define coordinate systems which are uniformly bi-Lipschitz in $s$:
			\begin{align}\label{eq:y_biLip}
				\forall s>0,\ \forall a,b\in \B_s,\qquad \frac{|a-b|}{100} \leq | y^s(a) - y^s(b)| \leq 100\, |a-b|.
			\end{align}
		\end{claim}
		\begin{proof}
			For $1\leq i\leq n$, we denote $(e_{s,1},\ldots,e_{s,n})$ the dual frame of $(\omega^{s,1},\ldots,\omega^{s,n})$ and define
			\begin{align*}
				\overline{e_{s,i}} \coloneq \fint_{\B_s} e_{s,i}(x)\, dx.
			\end{align*}
			Thanks to \eqref{eq:Der_frame} and \eqref{eq:Initial_g}, we have for all $1\leq i\leq n$ that
			\begin{align}\label{eq:e_constant}
				\left\| e_{s,i} - \overline{e_{s,i}}\right\|_{L^{\infty}(\B_s,\geu)} \leq C(n)\, \eps_0.
			\end{align}
			Indeed, the estimate \eqref{eq:Pointwise_g} implies that $\frac{1}{2} < |e_{s,i}|_{\geu} < 2$. Combining \eqref{eq:der_frame} and \eqref{eq:Omegaij}, we obtain 
			\begin{align*}
				\left\| \g^{\geu} e_{s,i} \right\|_{L^{(n,1)}(\B_s,\geu)} \leq C(n)\, \eps_0.
			\end{align*}
			We obtain \eqref{eq:e_constant} by the Sobolev inequality \eqref{eq:Sobolev_avg}. Hence, if $\eps_0=\eps_0(n)$ is small enough, we deduce from \eqref{eq:Pointwise_g} and \eqref{eq:e_constant} that the family $\left(\overline{e_1},\ldots,\overline{e_n}\right)$ defines a constant basis of $\R^n$ such that 
			\begin{align}\label{eq:Unif_basis}
				\forall x\in \R^n,\qquad \frac{1}{10}\, |x|_{\geu} \leq \left|\begin{pmatrix}
					\scal{\overline{e_1}}{x}_{\geu} \\
					\vdots\\
					\scal{\overline{e_n}}{x}_{\geu}
				\end{pmatrix}\right| \leq 10\, |x|_{\geu}.
			\end{align}
			Moreover, \eqref{eq:Unif_dy} together with \eqref{eq:Initial_g} imply that 
			\begin{align}\label{eq:gy}
				\left\|\g^{\geu} y^{s,i} - \overline{e_{s,i}} \right\|_{L^{\infty}(\B_s,\geu)} \leq C(n)\, \eps_0.
			\end{align}
			If $\eps_0>0$ is small enough, we obtain that $y^{s,i}$ is uniformly bi-Lipschitz. Indeed, if $a,b\in \B_s$, then 
			\begin{align*}
				y^{s,i}(a)-y^{s,i}(b) = \int_0^1 dy^{s,i}( (1-t)b + ta )(a-b)\, dt.
			\end{align*}
			Hence, we obtain 
			\begin{align*}
				\left| y^s(a)-y^s(b) - \begin{pmatrix}
					\scal{\overline{e_{s,1}}}{a-b}_{\geu} \\
					\vdots\\
					\scal{\overline{e_{s,n}}}{a-b}_{\geu}
				\end{pmatrix} \right| \leq C(n)\, \eps_0\, |a-b|.
			\end{align*}
			As a consequence of \eqref{eq:Unif_basis}, we obtain that if $\eps_0>0$ is small enough, then
			\begin{align*}
				\frac{1}{100}\, |a-b| \leq |y^s(a)-y^s(b)| \leq 100\, |a-b|.
			\end{align*}
			Hence the map $y^s\colon \B_s\to y^s(\B_s)$ is injective, bi-Lipschitz and thus, defines a coordinate system.
		\end{proof}

		Concerning the metric $g$, it holds 
		\begin{align*}
			g=\sum_{i=1}^n \omega^{s,i}\otimes \omega^{s,i} \qquad \text{ on }\B_s.
		\end{align*}
		Then \eqref{eq:Unif_dy}, \eqref{eq:Omegaij} and \eqref{eq:Pointwise_frame} implies that $\|dy^s\|_{\bar{W}^{2,\left(\frac{n}{2},1\right)}(\B_s,\geu)} \leq C(n)$. Hence, for any $1\leq \alpha,\beta\leq n$, 
		\begin{align*}
			& \left\| g_{\alpha\beta} - \scal{\dr_{\alpha}y^s}{ \dr_{\beta} y^s}_{\geu} \right\|_{\tilde{W}^{2,\left(\frac{n}{2},1\right)}(\B_s,\geu)} \\[2mm]
			&\leq \sum_{i=1}^n \|\omega^{s,i}(\dr_{\alpha}) - \dr_{\alpha} y^{s,i}\|_{\tilde{W}^{2,\left(\frac{n}{2},1\right)}(\B_s,\geu)} \left( \|\omega^{s,i}(\dr_{\beta})\|_{\tilde{W}^{2,\left(\frac{n}{2},1\right)}(\B_s,\geu)} + \|\dr_{\beta} y^{s,i}\|_{\tilde{W}^{2,\left(\frac{n}{2},1\right)}(\B_s,\geu)} \right) \\[2mm]
			& \qquad + \sum_{i=1}^n \|\omega^{s,i}(\dr_{\beta}) - \dr_{\beta} y^{s,i}\|_{\tilde{W}^{2,\left(\frac{n}{2},1\right)}(\B_s,\geu)} \left( \|\omega^{s,i}(\dr_{\alpha})\|_{\tilde{W}^{2,\left(\frac{n}{2},1\right)}(\B_s,\geu)} + \|\dr_{\alpha} y^{s,i}\|_{\tilde{W}^{2,\left(\frac{n}{2},1\right)}(\B_s,\geu)} \right) \\[2mm]
			& \leq C(n)\, \|\Riem^g\|_{L^{\left(\frac{n}{2},1\right)}(\B_s,g)}.
		\end{align*}
		Moreover, \eqref{eq:e_constant} and \eqref{eq:Unif_basis} imply that 
		\begin{align*}
			\|d(y^s)^{-1}\|_{L^{\infty}(\B_s,\geu)} \leq C(n).
		\end{align*}
		Thus, we obtain for any $1\leq \alpha,\beta \leq n$,
		\begin{align*}
			\left\| \left( ((y^s)^{-1})^*g \right)_{\alpha\beta} - \delta_{\alpha\beta} \right\|_{\tilde{W}^{2,\left(\frac{n}{2},1\right)}(y^s(\B_s),\geu)}  \leq C\, \|\Riem^g\|_{L^{\left(\frac{n}{2},1\right)}(\B_s,g)}.
		\end{align*}
	\end{proof} 
	
	We now pass to the limit $s\to +\infty$ in Proposition \ref{pr:apriori_local} to obtain Proposition \ref{pr:apriori_global}.
	
	\begin{proof}[Proof of Proposition \ref{pr:apriori_global}]
		We use the notations of the proof of Proposition \ref{pr:apriori_local}.\\
		The pointwise estimate \eqref{eq:Pointwise_g} implies that each $\omega^{s,i}$ is uniformly bounded in $L^{\infty}(\B_s)$. Combining \eqref{eq:Der_frame} and \eqref{eq:Omegaij}, we can pass to the limit $s\to +\infty$ and, up to a subsequence, we obtain a coframe $(\omega^1,\ldots,\omega^n)$ such that $d\omega^i = \omega^j \wedge \omega^i_j$ on $\R^n$ with the properties
		\begin{align}
			& d^{*_{\geu}}\omega^{i}_{j} = 0 \qquad \text{in }\R^n, \nonumber \\[2mm]
			& \sum_{1\leq i,j\leq n} \left( \|\omega^i_j\|_{L^{(n,1)}(\R^n,\geu)} + \|\g^{\geu} \omega^i_j\|_{L^{\left(\frac{n}{2},1\right)}(\R^n,\geu)} \right) \leq C\, \|\Riem^g\|_{L^{\left(\frac{n}{2},1\right)}(\R^n,g)}. \label{eq:intermediate_frame}
		\end{align}
		Concerning the maps $y^s\colon \B_s\to y^s(\B_s)$, they are $C^1$-diffeomorphism thanks to \eqref{eq:Unif_dy} with distortion bounded from above by $C(n)>1$ by \eqref{eq:y_biLip}. Letting $s\to +\infty$, we obtain a $C^1$ diffeomorphism $y\colon \R^n\to \R^n$ such that 
		\begin{align*}
			& \left\| \left( (y^{-1})^*g \right)_{\alpha\beta} - \delta_{\alpha\beta} \right\|_{L^{\infty}(\R^n,\geu)} +  \left\|\g^{\geu} \left( (y^{-1})^*g \right)_{\alpha\beta} \right\|_{L^{(n,1)}(\R^n,\geu)} + \left\|(\g^{\geu})^2 \left( (y^{-1})^*g \right)_{\alpha\beta} \right\|_{L^{\left(\frac{n}{2},1\right)}(\R^n,\geu)}  \\[2mm]
			& \leq C\, \|\Riem^g\|_{L^{\left(\frac{n}{2},1\right)}(\R^n,g)}.
		\end{align*}
		Moreover, the estimate \eqref{eq:Initial_g1} implies that
		\begin{align*}
			& \sum_{1\leq i,j\leq n} \left( \|\omega^i_j\|_{L^{(n,1)}(\R^n,g)} + \|\g^g \omega^i_j\|_{L^{\left(\frac{n}{2},1\right)}(\R^n,g)} \right) \\[2mm]
			& \leq C\, \eps\, \sum_{1\leq i,j\leq n} \|\omega^i_j\|_{L^{(n,1)}(\R^n,\geu)} + \sum_{1\leq i,j\leq n} \left( \|\omega^i_j\|_{L^{(n,1)}(\R^n,\geu)} + \|\g^{\geu} \omega^i_j\|_{L^{\left(\frac{n}{2},1\right)}(\R^n,\geu)} \right).
		\end{align*}
		The estimate \eqref{eq:intermediate_frame} implies that for $\eps_0$ small enough (depending only on $n$), it holds
		\begin{align*}
			\sum_{1\leq i,j\leq n} \left( \|\omega^i_j\|_{L^{(n,1)}(\R^n,g)} + \|\g^g \omega^i_j\|_{L^{\left(\frac{n}{2},1\right)}(\R^n,g)} \right) \leq C(n)\, \|\Riem^g\|_{L^{\left(\frac{n}{2},1\right)}(\R^n,g)}.
		\end{align*}
	\end{proof}

	\subsection{The continuity argument}\label{sec:Continuity}
	
	In this section, we prove a slightly stronger version of \Cref{th:diff_Rn}. 
	\begin{theorem}\label{th:diff_Rn_v2}
		There exist $\eps_0>0$ and $C_0>0$ depending only on $n$ satisfying the following property. Let $g$ be a metric on $\R^n$ of class $\bar{W}^{2,\left(\frac{n}{2},1\right)}(\R^n)$ such that  $g_{ij}(x_0)=\delta_{ij}$ at some point $x_0\in \R^n$ and 
		\begin{align*}
			\|\Riem^g\|_{L^{\left(\frac{n}{2},1\right)}(\R^n,g)} \leq \eps_0.
		\end{align*}
		Then there exist a $C^1$-diffeomorphism $y\colon \R^n\to \R^n$ and a coframe $(\omega^1,\ldots,\omega^n)$ for $g$ in $\R^n$ with connection forms $\omega^i_j$ such that the following estimates hold for all $1\leq i,j\leq n$
		\begin{align*}
			& 1) \qquad \left\|(y^*g)_{ij} - \delta_{ij} \right\|_{\bar{W}^{2,\left(\frac{n}{2},1\right)}(\R^n,\geu)} \leq C_0\, \|\Riem^g\|_{L^{\left(\frac{n}{2},1\right)}(\R^n,g)},\\[3mm]
			& 2) \qquad \left\| d(y^{-1})^i - \omega^i \right\|_{\bar{W}^{2,\left(\frac{n}{2},1\right)}(\R^n,g)} \leq C_0\,\|\Riem^g\|_{L^{\left(\frac{n}{2},1\right)}(\R^n,g)},\\[3mm]
			& 3) \qquad \| \omega^i_j\|_{L^{(n,1)}(\R^n,g)} + \|\g^g \omega^i_j \|_{L^{\left(\frac{n}{2},1\right)}(\R^n,g)} \leq C\, \|\Riem^g\|_{L^{\left(\frac{n}{2},1\right)}(\R^n,g)}.
		\end{align*}
	\end{theorem}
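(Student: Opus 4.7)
The plan is to prove \Cref{th:diff_Rn_v2} by a continuity argument in the spirit of Uhlenbeck's gauge construction, using \Cref{pr:apriori_global} as the main a priori estimate. Without loss of generality assume $x_0 = 0$. I would consider the linear family of weak metrics
\[
g_t \coloneqq (1-t)\, \geu + t\, g,\qquad t\in[0,1],
\]
all of class $\bar{W}^{2,\left(\frac{n}{2},1\right)}(\R^n)$, satisfying $g_t(0) = \delta$, $g_0 = \geu$, and $g_1 = g$. The goal is to show that the set
\[
T \coloneqq \bigl\{ t \in [0,1] : \text{there exist } y_t \in \Diff(\R^n) \text{ and a coframe } \omega_t^i \text{ satisfying 1)--3) for } g_t \text{ with constant } 2C_0 \bigr\}
\]
coincides with $[0,1]$, which then yields the theorem at $t=1$.

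Initially, $0 \in T$ holds trivially with $y_0 = \Id$ and $\omega_0^i = dx^i$, since $\Riem^{\geu} = 0$. The closedness of $T$ is a standard compactness argument: for a sequence $(t_n) \subset T$ with $t_n \to t_\infty$, the uniform bounds contained in the definition of $T$ yield, via compact Sobolev embeddings of the Lorentz--Sobolev spaces involved, convergence up to subsequence of $(y_{t_n}, \omega_{t_n}^i)$ to a limit pair that satisfies the conclusions at $t_\infty$.

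The heart of the argument is the openness step at some $t_0 \in T$. Here I would pass to the pulled-back metric $h_t \coloneqq y_{t_0}^* g_t$ on $\R^n$. Since $t_0 \in T$, the metric $h_{t_0}$ is close to $\delta$ in $\bar{W}^{2,\left(\frac{n}{2},1\right)}(\R^n)$, and for $t$ near $t_0$ the perturbation $h_t - h_{t_0} = (t - t_0)\, y_{t_0}^*(g - \geu)$ has $\bar{W}^{2,\left(\frac{n}{2},1\right)}$ norm bounded by a constant depending on $g$ and $y_{t_0}$ times $|t - t_0|$. For $|t - t_0|$ sufficiently small, $h_t$ therefore falls within the hypotheses of \Cref{pr:apriori_global}, which produces a further diffeomorphism and coframe satisfying the estimates with constant $C_0$. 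Composing with $y_{t_0}$ yields the desired data for $g_t$. The strict gain $C_0 < 2 C_0$ is precisely what closes the openness argument, echoing the classical mechanism of Uhlenbeck's continuity method.

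The main obstacle I anticipate is keeping $\|\Riem^{g_t}\|_{L^{\left(\frac{n}{2},1\right)}(\R^n, g_t)}$ uniformly small along $[0,1]$, since the Riemann tensor depends nonlinearly on the metric and this is not automatic from the smallness of $\|\Riem^g\|$ for the linear interpolation. I expect this to be addressed by working in the pulled-back coordinates $y_{t_0}$: there $h_t$ is close to $\delta$ in $\bar{W}^{2,\left(\frac{n}{2},1\right)}$, so formula \eqref{eq:Riem_coordinates} expresses $\Riem^{h_t}$ as $\partial^2 h_t$ plus lower-order products of $\partial h_t$ controlled by the Sobolev embedding $W^{2,\left(\frac{n}{2},1\right)} \hookrightarrow W^{1,(n,1)}$. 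Combined with the pull-back invariance of the $L^{\left(\frac{n}{2},1\right)}$ norm of the curvature tensor, this controls $\|\Riem^{g_t}\|_{L^{\left(\frac{n}{2},1\right)}(g_t)}$ by a uniform multiple of $\|\Riem^g\|_{L^{\left(\frac{n}{2},1\right)}(g)}$, propagating the smallness along the continuity path and closing the argument.
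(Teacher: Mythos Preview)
Your overall architecture --- a continuity method built around \Cref{pr:apriori_global} --- matches the paper's, and your openness step is essentially the paper's. The gap is in the choice of path. With the linear interpolation $g_t = (1-t)\geu + tg$, the quantity $\|\Riem^{g_t}\|_{L^{(n/2,1)}(\R^n,g_t)}$ is \emph{not} controlled by $\|\Riem^g\|$, and your proposed fix in the last paragraph is circular. Indeed, your bound on $\Riem^{h_t}$ via \eqref{eq:Riem_coordinates} gives only
\[
\|\Riem^{g_t}\| \;\leq\; C\,\|h_t-\delta\|_{\bar W^{2,(n/2,1)}} \;\leq\; C\bigl(2C_0\,\|\Riem^{g_{t_0}}\| + |t-t_0|\,M\bigr),
\]
where $M$ depends on $g$ and $y_{t_0}$. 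This is not a multiple of $\|\Riem^g\|$, and you have no independent smallness of $\|\Riem^{g_{t_0}}\|$; trying to absorb it back into the continuity hypothesis forces a constraint like $2CC_0<1$ on universal constants that there is no reason to expect. A concrete obstruction: take $g=f^*\geu$ for a compactly supported diffeomorphism $f$ with $f(0)=0$, $df(0)=I$, but $\|g-\delta\|_{\bar W^{2,(n/2,1)}}$ large. Then $\Riem^g=0$, yet $g_t=(1-t)\delta+tg$ is generically \emph{not} flat for $t\in(0,1)$, and $\|\Riem^{g_t}\|$ can be made as large as you like --- so the path leaves the regime where \Cref{pr:apriori_global} applies, and your $T$ need not be open.

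The paper resolves this by choosing a different path: the dilation family $g_t\coloneqq g(t\,\cdot)$. Since $g(0)=\delta$ this still connects $\geu$ to $g$ in $\bar W^{2,(n/2,1)}_{\loc}$, and crucially the curvature norm is \emph{exactly} preserved along it by scale invariance,
\[
\|\Riem^{g_t}\|_{L^{(n/2,1)}(\R^n,g_t)} \;=\; \|\Riem^{g}\|_{L^{(n/2,1)}(\R^n,g)}\qquad\text{for all }t\in[0,1],
\]
so every $g_t$ stays in the set $\Ur^{\eps}$ of \eqref{eq:def_Ueps} and the obstacle you identified simply disappears. The rest of the paper's argument then runs as you outlined: $\Vr^{\eps}_C\subset\Ur^{\eps}$ is shown to be nonempty, closed (by extracting weak limits and then \emph{reapplying} \Cref{pr:apriori_global} to recover the sharp constant), and open (your openness step), hence all of $\Ur^{\eps}$ by path-connectedness.
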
 
	
	Given $\eps>0$ and $C>0$, we consider the sets $\Ur^{\eps}$ and $\Vr^{\eps}_C$ defined as follows
	\begin{align}\label{eq:def_Ueps}
		\Ur^{\eps} \coloneq \left\{ g\ \text{ weak } W^{2,\left( \frac{n}{2},1 \right)} \text{ metric on }\R^n : \begin{array}{l}
			\bullet \quad \exists x\in\R^n,\quad g(x)=\geu,\\[2mm]
			\bullet \quad g_{ij} \in \bar{W}^{2,\left(\frac{n}{2},1\right)}(\R^n), \\[2mm]
			\bullet \quad  \|\Riem^g\|_{L^{\left(\frac{n}{2},1\right)}(\R^n,g)} \leq \eps .
		\end{array} \right\},
	\end{align}
    
	\begin{align}\label{eq:def_Veps}
		\Vr^{\eps}_{C} \coloneq \left\{ g\in \Ur^{\eps} : \begin{array}{l}
			\exists y\in \Diff(\R^n)\text{ s.t.}\\[3mm]
			\qquad \bullet \qquad dy\in\bar{W}^{2,\left(\frac{n}{2},1\right)}(\R^n), \\[3mm]
			\qquad \bullet \qquad \left\| \left( (y^{-1})^*g \right)_{\alpha\beta} - \delta_{\alpha\beta} \right\|_{\bar{W}^{2,\left(\frac{n}{2},1\right)}(\R^n,\geu)}  \leq C\, \|\Riem^g\|_{L^{\left(\frac{n}{2},1\right)}(\R^n,g)}, \\[5mm]
			\exists (\omega^1,\ldots,\omega^n)\ \text{ coframe for $g$ on $\R^n$ s.t.}\\[3mm]
			\qquad \bullet \qquad \| \omega^i_j\|_{L^{(n,1)}(\R^n,g)} + \|\g^g \omega^i_j \|_{L^{\left(\frac{n}{2},1\right)}(\R^n,g)} \leq C\, \|\Riem^g\|_{L^{\left(\frac{n}{2},1\right)}(\R^n,g)}, \\[4mm]
			\qquad \bullet \qquad \|dy^i - \omega^i\|_{\bar{W}^{2,\left(\frac{n}{2},1\right)}(\R^n,g)} \leq C\, \|\Riem^g\|_{L^{\left(\frac{n}{2},1\right)}(\R^n,g)}.
		\end{array} \right\}
	\end{align}
    
    We now show that $\Vr^{\eps}_C$ is a non-empty closed and open set in $\Ur^{\eps}$ for well-chosen constants $C$ and $\eps$, in the $\bar{W}^{2,\left(\frac{n}{2},1\right)}_{\loc}(\R^n)$-topology. 
    As a first step, we prove that $\Ur^{\eps}$ is path-connected.
	
	\begin{claim}\label{cl:U_connected}
		$\Ur^{\eps}$ is path-connected in the $W^{2,\left(\frac{n}{2},1\right)}_{\loc}(\R^n)$-topology for every $\eps$.
	\end{claim}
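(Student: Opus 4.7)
The plan is to show that every $g\in\Ur^\eps$ can be connected to the Euclidean metric $\geu$ by an explicit one-parameter family. First I would check that $\geu$ itself belongs to $\Ur^\eps$: all its coefficients are constant, the Riemann tensor vanishes, and the pointwise condition $g(x)=\geu$ holds at every point. Then, given $g\in\Ur^\eps$, I fix a point $x_*\in\R^n$ with $g(x_*)=\geu$ (which exists by the first bullet of \eqref{eq:def_Ueps}) and define, for $\lambda\in[0,1]$, the family of metrics with coefficients
\begin{align*}
(g_\lambda)_{ij}(x)\coloneq g_{ij}\bigl(A_\lambda(x)\bigr),\qquad A_\lambda(x)\coloneq \lambda x+(1-\lambda)x_*,
\end{align*}
so that $g_0=\geu$ and $g_1=g$.

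The main step is to verify that $g_\lambda\in\Ur^\eps$ for every $\lambda\in[0,1]$. The pointwise normalization is immediate: $(g_\lambda)(x_*)=g(x_*)=\geu$. The scale-invariant regularity class $\bar{W}^{2,\left(\frac{n}{2},1\right)}(\R^n)$ is preserved by composition with an affine dilation (the various homogeneity factors $\lambda^k$ cancel after change of variables). For the Riemann bound, the key computation is that for $\lambda>0$ the Christoffel symbols and Riemann tensor of $g_\lambda$ satisfy $\Gamma[g_\lambda](x)=\lambda\,\Gamma[g](A_\lambda(x))$ and $\Riem^{g_\lambda}(x)=\lambda^2\,\Riem^g(A_\lambda(x))$, hence $|\Riem^{g_\lambda}|_{g_\lambda}(x)=\lambda^2\,|\Riem^g|_g(A_\lambda(x))$. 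A change of variable $y=A_\lambda(x)$, with Jacobian $\lambda^n$, together with the corresponding scaling of $d\vol_{g_\lambda}$, will give
\begin{align*}
\|\Riem^{g_\lambda}\|_{L^{\left(\frac{n}{2},1\right)}(\R^n,g_\lambda)}=\|\Riem^g\|_{L^{\left(\frac{n}{2},1\right)}(\R^n,g)}\leq\eps,
\end{align*}
and the case $\lambda=0$ is trivial since $\Riem^{\geu}=0$.

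The only delicate part is the continuity of $\lambda\mapsto g_\lambda$ at $\lambda=0$ in the $W^{2,\left(\frac{n}{2},1\right)}_{\loc}$-topology. For $\lambda\in(0,1]$, continuity follows from the continuity of translations and dilations on Lorentz--Sobolev spaces. For $\lambda\to 0$, fix a compact set $K\subset\R^n$; then $A_\lambda(K)$ shrinks to the single point $\{x_*\}$. Using the embedding $\bar{W}^{2,\left(\frac{n}{2},1\right)}(\R^n)\hookrightarrow W^{1,(n,1)}(\R^n)\hookrightarrow C^0(\R^n)$ recalled in the introduction, the coefficients $g_{ij}$ are continuous, so $\|g_\lambda-\geu\|_{L^\infty(K)}=\|g(A_\lambda(\cdot))-g(x_*)\|_{L^\infty(K)}\to 0$. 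For the derivatives, $\partial g_\lambda(x)=\lambda(\partial g)(A_\lambda(x))$ and $\partial^2 g_\lambda(x)=\lambda^2(\partial^2 g)(A_\lambda(x))$; the same change of variable reduces the relevant $L^{(n,1)}$ and $L^{\left(\frac{n}{2},1\right)}$ norms on $K$ to the corresponding norms of $\partial g$ and $\partial^2 g$ over $A_\lambda(K)$, which vanish in the limit by absolute continuity of Lorentz norms.

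The step I expect to require the most care is the continuity at $\lambda=0$: one must confirm that the Lorentz norms $L^{(n,1)}$ and $L^{\left(\frac{n}{2},1\right)}$ enjoy the absolute-continuity property $\|f\|_{L^{(p,q)}(E_\lambda)}\to 0$ when the measurable sets $E_\lambda$ shrink to a point, which is standard for $q<\infty$ but should be spelled out. Once this is in place, concatenating the path with itself trivially shows that any two metrics in $\Ur^\eps$ can be joined via $\geu$, establishing path-connectedness.
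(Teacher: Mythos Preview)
Your proposal is correct and follows essentially the same approach as the paper: both construct the path by rescaling $g$ about the point $x_*$ where $g(x_*)=\geu$, use the scale-invariance of $\|\Riem^g\|_{L^{(\frac{n}{2},1)}(\R^n,g)}$ to stay in $\Ur^\eps$, and obtain continuity at $\lambda=0$ from the continuity of $g_{ij}$ together with the vanishing of the derivative norms on shrinking balls. The paper simply translates so that $x_*=0$ and writes $g_t=g(t\cdot)$, but the argument is otherwise identical.
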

	\begin{proof}
		Let $g\in \Ur^{\eps}$. Let $x\in\R^n$ be such that $g_{ij}(x)=\delta_{ij}$. For simplicity, we assume that $x=0$.	For $t\in[0,1]$, we define $g_t \coloneqq g(t\, \cdot)$. We have $g_0=\geu$ and  $\Riem^{g_t} = t^2\, \Riem^g(t\cdot)$. Thus, we have
		\begin{align*}
			\|\Riem^{g_t}\|_{L^{\left(\frac{n}{2},1\right)}(\R^n,g_t)} & = \frac{n}{2}\int_0^{+\infty} \vol_{g_t}\left( \left\{ x\in\R^n : |\Riem^{g_t}(x)|_{g_t(x)} \geq s \right\}\right)^{\frac{2}{n}}\, ds \\[2mm]
			& = \frac{n}{2}\int_0^{+\infty} \vol_{g_t}\left( \left\{ x\in\R^n : t^2|\Riem^g(tx)|_{g(tx)} \geq s \right\}\right)^{\frac{2}{n}}\, ds \\[2mm]
			& = \frac{n}{2}\int_0^{+\infty} \vol_g\left( \left\{ y\in\R^n : |\Riem^g(y)|_{g(y)} \geq \frac{s}{t^2} \right\}\right)^{\frac{2}{n}}\, \frac{ds}{t^2} \\[2mm]
			& = \frac{n}{2}\int_0^{+\infty} \vol_g\left( \left\{ y\in\R^n : |\Riem^g(y)|_{g(y)} \geq \tau \right\}\right)^{\frac{2}{n}}\, d\tau \\[2mm]
			& = \|\Riem^g\|_{L^{\left(\frac{n}{2},1\right)}(\R^n,g)} \leq \eps.
		\end{align*}
		Moreover since the coefficients $g_{\alpha\beta}$ are continuous and equal to $\delta_{\alpha\beta}$ at $0$, we have for any radius $R>0$ and $1\leq \alpha,\beta\leq n$,
        \begin{align*}
         \|(g_t)_{\alpha\beta} - \delta_{\alpha\beta} \|_{\bar{W}^{2,\left(\frac{n}{2},1\right)}(\B^n_R,\geu)} = \|g_{\alpha\beta} - \delta_{\alpha\beta} \|_{\bar{W}^{2,\left(\frac{n}{2},1\right)}(\B^n_{tR},\geu)} \xrightarrow[t\to 0]{} 0.
        \end{align*}
        Hence, $(t\mapsto g_t)$ is a continuous path in $\Ur^{\eps}$ for the $\bar{W}^{2,\left(\frac{n}{2},1\right)}_{\loc}(\R^n)$-topology connecting $g$ to $\geu$.
	\end{proof}

	Since $\geu\in \Vr^{\eps}_C$ for any $\eps$ and $C$, it is not empty. We now show that $\Vr^{\eps}_C$ is closed for $\eps$ small enough.
	
	\begin{claim}
		For any $C>0$, there exists $\eps_c>0$ depending only on $n$ and $C$ such that for every $\eps\in(0,\eps_c)$, the set $\Vr^{\eps}_C$ is closed in $\Ur^{\eps}$ for the strong $\bar{W}^{2,\left(\frac{n}{2},1\right)}_{\loc}(\R^n)$-topology.
	\end{claim}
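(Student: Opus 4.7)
Take a sequence $(g_k)_{k\in\N}\subset \Vr^{\eps}_C$ converging strongly in $\bar{W}^{2,\left(\frac{n}{2},1\right)}_{\loc}(\R^n)$ to some $g\in \Ur^{\eps}$, and let $y_k\in\Diff(\R^n)$ and $(\omega_k^1,\ldots,\omega_k^n)$ be the data witnessing $g_k\in \Vr^{\eps}_C$. The plan is to extract compactness, pass to the limit in the structure equations, and close the estimates. After replacing $y_k$ by $y_k-y_k(0)$, which preserves every estimate, the uniform bi-Lipschitz bound with constant $100$ from Claim~\ref{cl:y_bilip}---valid along the sequence provided $\eps_c$ is small enough in terms of $n$ and $C$---combined with Arzelà--Ascoli produces a subsequence with $y_k\to y$ locally uniformly, where $y\colon\R^n\to\R^n$ is bi-Lipschitz of constant $100$ and hence a homeomorphism of $\R^n$ onto itself. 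The uniform $\bar{W}^{2,\left(\frac{n}{2},1\right)}(\R^n)$ bound on $dy_k$ yields weak-$*$ convergence $dy_k\rightharpoonup dy$ and local strong convergence in $L^q_{\loc}$ for all $q<\infty$. Similarly, the orthonormality $|\omega_k^i|_{g_k}=1$ together with $L^\infty_{\loc}$ convergence $g_k\to g$ bound $\omega_k^i$ in $L^\infty_{\loc}$, while the bounds on the connection forms in $L^{(n,1)}\cap W^{1,\left(\frac{n}{2},1\right)}$ give, via Rellich-type compactness in Lorentz--Sobolev spaces, $\omega_k^i\to \omega^i$ and $\omega^i_{k,j}\rightharpoonup \omega^i_j$, with strong convergence in subcritical local Lorentz norms.

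Next I pass to the limit in the structure equations. The identity $g_k=\sum_i \omega_k^i\otimes\omega_k^i$ transfers under local $L^\infty$ convergence, so $(\omega^1,\ldots,\omega^n)$ is a coframe for $g$; passing to the limit in $d\omega_k^i=\omega_k^j\wedge\omega^i_{k,j}$ through a weak-strong product in Lorentz spaces identifies the $\omega^i_j$ as its connection forms. Continuity of each $\omega^i$ (from $W^{1,(n,1)}\hookrightarrow C^0$) together with $\|dy_k-\omega_k^i\|_{L^\infty}\leq C\,\eps$ and the $L^q_{\loc}$ convergence of $dy_k$ forces $dy$ to be continuous, and the inverse function theorem upgrades $y$ from bi-Lipschitz homeomorphism to $C^1$-diffeomorphism. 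All the left-hand sides of the inequalities defining $\Vr^\eps_C$ are lower semicontinuous under these extracted convergences, so they pass to the limit.

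The main obstacle is to close the estimates with the original constant $C$ on the right-hand side, which requires convergence of the \emph{global} Lorentz norm
\[
\|\Riem^{g_k}\|_{L^{\left(\frac{n}{2},1\right)}(\R^n,g_k)}\xrightarrow[k\to\infty]{} \|\Riem^{g}\|_{L^{\left(\frac{n}{2},1\right)}(\R^n,g)},
\]
whereas the hypothesis only provides strong convergence on compact sets. I combine the local strong convergence $\Riem^{g_k}\to\Riem^g$ in $L^{\left(\frac{n}{2},1\right)}_{\loc}$---which follows from the quadratic expression of the Riemann tensor in $g$, $\g g$, $\g^2 g$ together with strong convergence of $g_k$ in $\bar{W}^{2,\left(\frac{n}{2},1\right)}_{\loc}$---with equi-integrability of the family $\{\Riem^{g_k}\}$ in the Lorentz scale, itself a consequence of the uniform bound $\|\Riem^{g_k}\|_{L^{\left(\frac{n}{2},1\right)}}\leq \eps$ and the tail-decay properties of Lorentz norms on $\R^n$. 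The threshold $\eps_c=\eps_c(n,C)$ is then chosen small enough that Claim~\ref{cl:y_bilip} and Proposition~\ref{pr:apriori_global} apply uniformly along $(g_k)$, forcing the limit $y$ to remain bi-Lipschitz of constant $100$.
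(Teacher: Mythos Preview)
Your approach diverges from the paper's at the critical step, and the divergence hides a real gap. You try to pass \emph{every} estimate defining $\Vr^{\eps}_C$ directly to the limit, using lower semicontinuity on the left and convergence of $\|\Riem^{g_k}\|_{L^{(n/2,1)}(\R^n,g_k)}$ on the right. But the convergence of the global Lorentz norm of the curvature does not follow from the hypotheses. Strong $\bar{W}^{2,(n/2,1)}_{\loc}$ convergence of $g_k$ gives only local strong convergence $\Riem^{g_k}\to\Riem^g$, hence $\|\Riem^g\|_{L^{(n/2,1)}(\R^n)}\leq \liminf_k \|\Riem^{g_k}\|_{L^{(n/2,1)}(\R^n)}$, which is the wrong direction. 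Your appeal to ``equi-integrability \ldots\ a consequence of the uniform bound $\|\Riem^{g_k}\|_{L^{(n/2,1)}}\leq\eps$ and tail-decay properties'' is unfounded: a uniform bound in $L^{(n/2,1)}(\R^n)$ does not prevent mass from escaping to infinity. For instance, if $g\in\Ur^\eps$ has nonzero curvature and $g_k:=g(\cdot-k e_1)$, then $g_k\to\text{constant}$ in $\bar{W}^{2,(n/2,1)}_{\loc}$ while $\|\Riem^{g_k}\|$ stays fixed. So lower semicontinuity yields only $\text{LHS}(g,y,\omega)\leq C\,\liminf_k\|\Riem^{g_k}\|$, not $\leq C\,\|\Riem^{g}\|$, and the argument does not close.

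The paper avoids this obstacle entirely. It does not attempt to make the limiting data $(y_\infty,\omega_\infty)$ satisfy the $\Vr^\eps_C$ estimates with constant $C\,\|\Riem^{g^\infty}\|$. Instead it passes to the limit only in the single inequality
\[
\left\| \big((y_k^{-1})^*g_k\big)_{\alpha\beta}-\delta_{\alpha\beta} \right\|_{\bar{W}^{2,(n/2,1)}(\R^n)}\leq C\,\|\Riem^{g_k}\|\leq C\,\eps,
\]
obtaining $\left\| \big((y_\infty^{-1})^*g^\infty\big)_{\alpha\beta}-\delta_{\alpha\beta} \right\|_{\bar{W}^{2,(n/2,1)}(\R^n)}\leq C\,\eps$. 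For $\eps<\eps_c(n,C)$ this puts $(y_\infty^{-1})^*g^\infty$ inside the hypotheses of Proposition~\ref{pr:apriori_global}, which then produces \emph{fresh} data $(y',\omega')$ satisfying the $\Vr^\eps_{C_0}$ estimates with right-hand side $C_0\,\|\Riem^{(y_\infty^{-1})^*g^\infty}\|=C_0\,\|\Riem^{g^\infty}\|$ by diffeomorphism invariance. Composing with $y_\infty$ shows $g^\infty\in\Vr^\eps_C$. The point is that Proposition~\ref{pr:apriori_global} is applied to the \emph{limit} metric in good coordinates, not ``uniformly along $(g_k)$'' as you wrote. A minor side remark: your invocation of Claim~\ref{cl:y_bilip} for the uniform bi-Lipschitz bound is slightly off, since that claim concerns the specific maps $y^s$ built in Proposition~\ref{pr:apriori_local}, not arbitrary witnesses of membership in $\Vr^\eps_C$; the required compactness for $dy_k$ and $d(y_k^{-1})$ has to be read off directly from the $\Vr^\eps_C$ estimates together with the local strong convergence of $g_k$.
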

	\begin{proof}
		Let $(g^k)_{k\in\N}\subset \Vr^{\eps}_{C}$ be a sequence converging to some $g^{\infty}\in \Ur^{\eps}$ in the $\bar{W}^{2,\left(\frac{n}{2},1\right)}_{\loc}(\R^n)$-topology. 
        Up to a subsequence, the associated coframes $(\omega^{k,1},\ldots,\omega^{k,n})$ are bounded in  in the $L^{\infty}\cap \dot{W}^{1,(n,1)}(\R^n)$ topology, and thus converge locally uniformly and weakly in $\dot{W}^{1,(n,1)}(\R^n)$ to some coframe $(\omega^{\infty,1},\ldots,\omega^{\infty,n})$ for $g^{\infty}$ such that 
		\begin{align}\label{eq:bound_gomegai}
			\| \g^{\geu} \omega^{\infty,i}\|_{L^{(n,1)}(\R^n,\geu)} \leq \sup_{R>0} \ \liminf_{k\to +\infty}\ \| \g^{\geu} \omega^{k,i}\|_{L^{(n,1)}(\B_R,\geu)}  \leq C(g).
		\end{align}
		Indeed, we have for any $R>0$ by \eqref{eq:covariant_form} and \eqref{eq:gomegai}
		\begin{align*}
			\| \g^{\geu} \omega^{k,i}\|_{L^{(n,1)}(\B_R,\geu)} & \leq C(n)\, \|g^k\|_{\bar{W}^{2,\left(
					\frac{n}{2},1\right)} (\B_R,\geu)}^2 + C(n)\|\g^{\geu}\omega^{k,i}_{\ j}\|_{L^{(n,1)}(\B_R,\geu)}\, \|\omega^{k,j}\|_{L^{\infty}(\B_R,\geu)} .
		\end{align*}
        Since $g^k|_{\B_R}$ converges strongly to $g|_{\B_R}$ in the $W^{2,\left(\frac{n}{2},1\right)}(\B_r)$-topology, we obtain 
        \begin{align*}
            \forall R>0,\qquad \liminf_{k\to +\infty}\ \| \g^{\geu} \omega^{k,i}\|_{L^{(n,1)}(\B_R,\geu)}  \leq C(n,g).
        \end{align*}
        This implies \eqref{eq:bound_gomegai}.\\
        
		By the same argument, we have (up to a subsequence) that the maps $dy_k$ converge to some 1-form $\theta\in \bar{W}^{2,\left(\frac{n}{2},1\right)}(\R^n;\R^n\otimes\Lambda^1\R^n)$ in the following sense:
        \begin{align*}
            dy_k \xrightarrow[k\to +\infty]{} \theta\qquad \text{ in }C^0_{\loc}(\R^n) \text{ and weakly in }\bar{W}^{2,\left(\frac{n}{2},1\right)}_{\loc}(\R^n).
        \end{align*}
        Hence, we can pass to the limit in the equation $d(dy_k)=0$ to obtain $d\theta=0$. Since $\R^n$ is simply connected, there exists a map $y_{\infty}\in C^1(\R^n;\R^n)$ such that $dy_{\infty}=\theta$.\\
        
		On the other hand, we have the following estimates
		\begin{align*}
			\left\| \left( (y^{-1}_k)^*g^k \right)_{\alpha\beta} - \delta_{\alpha\beta} \right\|_{\bar{W}^{2,\left(\frac{n}{2},1\right)}(\R^n,\geu)}  \leq C\, \|\Riem^{g^k}\|_{L^{\left(\frac{n}{2},1\right)}(\R^n,g^k)}.
		\end{align*}
		Therefore, it holds for any $R>0$,
		\begin{align*}
			\limsup_{k\to +\infty} \|d(y^{-1}_k) \|_{\bar{W}^{2,\left(\frac{n}{2},1\right)}(\B_R)} & \leq C\, \eps + c(n) + \limsup_{k\to +\infty} \|g^k\|_{\bar{W}^{2,\left(\frac{n}{2},1\right)}(\B_R)} \\[2mm]
             & \leq C(n,g)<+\infty.
		\end{align*}
		Thus, up to a subsequence, $d(y^{-1}_k)$ converges to some 1-form $\xi\in \bar{W}^{2,\left(\frac{n}{2},1\right)}(\R^n;\R^n\otimes \Lambda^1\R^n)$ in the following sense:
        \begin{align*}
            d(y_k^{-1}) \xrightarrow[k\to +\infty]{} \xi \qquad \text{ in }C^0_{\loc}(\R^n) \text{ and weakly in }\bar{W}^{2,\left(\frac{n}{2},1\right)}_{\loc}(\R^n).
        \end{align*}
        Thus, we can pass to the limit in the equation $d\left( d(y_k^{-1})\right)=0$ to obtain $d\xi=0$. Since $\R^n$ is simply connected, we obtain a map $z\in C^1(\R^n)$ such that $dz=\xi$.\\
        
        We can also pass to the limit in the relation $\left( \g^{\geu} y_k\circ y_k^{-1}\right) \g^{\geu}(y_k^{-1})=I_n$ to obtain $I_n = (\g^{\geu} y_{\infty}\circ z) \g^{\geu} z = \g^{\geu} (y_{\infty}\circ z)$. Thus, up to a translation,  we have that $y_{\infty}$ is invertible and $z = y_{\infty}^{-1}$. Thus, we can pass to the limit in the estimate of $(y_k^{-1})^*g^k$ and obtain for any $R>0$, 
		\begin{align*}
			\left\| \left( (y_{\infty}^{-1})^*g^{\infty} \right)_{\alpha\beta} - \delta_{\alpha\beta} \right\|_{\bar{W}^{2,\left(\frac{n}{2},1\right)}(\B_R,\geu)} & \leq \liminf_{k\to +\infty} \left\| \left( (y_{k}^{-1})^*g^{k} \right)_{\alpha\beta} - \delta_{\alpha\beta} \right\|_{\bar{W}^{2,\left(\frac{n}{2},1\right)}(\B_R,\geu)} \\[3mm]
			&  \leq C\, \liminf_{k\to +\infty} \|\Riem^{g^k}\|_{L^{\left(\frac{n}{2},1\right)}(\R^n,g^k)} \\[3mm]
			&  \leq C\, \eps.
		\end{align*}
		Thus, for $\eps<\eps_c$ with $\eps_c=\eps(n,C)$, we can apply Proposition \ref{pr:apriori_global} and we obtain $g^{\infty}\in \Vr^{\eps}_C$.
	\end{proof}
	
	It remains to show that $\Vr^{\eps}_C$ is open. This is essentially a direct application of Proposition \ref{pr:apriori_global}.
	
	\begin{claim}
		There exists $\eps_o>0$ and $C_o>0$ depending only on $n$ such that for any $\eps\in(0,\eps_o)$ and $C>C_o$, the set $\Vr^{\eps}_C$ is open in $\Ur^{\eps}$ for the $W^{2,\left(\frac{n}{2},1\right)}_{\loc}$-topology.
	\end{claim}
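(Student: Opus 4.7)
The plan is to reduce the openness to Proposition \ref{pr:apriori_global} applied to $\tilde g$ written in the good coordinates already available for $g$. Let $g \in \Vr^{\eps}_C$ with associated diffeomorphism $y \colon \R^n \to \R^n$ and coframe $(\omega^1,\ldots,\omega^n)$ from \eqref{eq:def_Veps}, and set $h := (y^{-1})^* g$, so that
\begin{align*}
\|h_{\alpha\beta} - \delta_{\alpha\beta}\|_{\bar W^{2,\left(\frac{n}{2},1\right)}(\R^n,\geu)} \leq C\eps.
\end{align*}
For $\tilde g \in \Ur^{\eps}$ in a sufficiently small neighborhood of $g$, define the pushforward $\tilde h := (y^{-1})^* \tilde g$, which is again a weak metric on $\R^n$.

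The first step is to verify that $\tilde h$ satisfies the hypotheses \eqref{eq:Initial_g1}--\eqref{eq:Initial_R1} of Proposition \ref{pr:apriori_global}. The curvature bound is automatic by diffeomorphism invariance of the Riemann tensor:
\begin{align*}
\|\Riem^{\tilde h}\|_{L^{\left(\frac{n}{2},1\right)}(\R^n,\tilde h)} = \|\Riem^{\tilde g}\|_{L^{\left(\frac{n}{2},1\right)}(\R^n,\tilde g)} \leq \eps.
\end{align*}
For the coefficient bound, write $\tilde h - \geu = (\tilde h - h) + (h - \geu)$. The second term is already controlled by $C\eps$, and the first equals $(y^{-1})^*(\tilde g - g)$. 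Since $g \in \Vr^{\eps}_C$ forces $dy$ (and, by the inverse function theorem combined with the bi-Lipschitz bound, $d(y^{-1})$) to lie in $\bar W^{2,\left(\frac{n}{2},1\right)}(\R^n)$ with uniform bounds, pullback by $y^{-1}$ is a bounded operation on $\bar W^{2,\left(\frac{n}{2},1\right)}(\R^n)$, so upon shrinking the neighborhood of $g$ one obtains $\|\tilde h - \geu\|_{\bar W^{2,\left(\frac{n}{2},1\right)}(\R^n)} \leq \eps_0$.

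The second step is to apply Proposition \ref{pr:apriori_global} to $\tilde h$, producing a $C^1$-diffeomorphism $\hat y \colon \R^n \to \R^n$ and a coframe $(\hat \omega^1,\ldots,\hat \omega^n)$ for $\tilde h$ with all four conclusions controlled by $\|\Riem^{\tilde g}\|_{L^{\left(\frac{n}{2},1\right)}(\R^n,\tilde g)}$. The final step is to transport these data back to $\tilde g$ by setting $\tilde y := \hat y \circ y$ and $\tilde \omega^i := y^* \hat \omega^i$. The composition identity
\begin{align*}
(\tilde y^{-1})^* \tilde g = (\hat y^{-1})^*(y^{-1})^* \tilde g = (\hat y^{-1})^* \tilde h
\end{align*}
directly yields the bound on $((\tilde y^{-1})^* \tilde g)_{\alpha\beta} - \delta_{\alpha\beta}$ required in \eqref{eq:def_Veps}. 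The uniform bi-Lipschitz constants and the regularity of $y$ let us transfer the bounds on $\hat \omega^i_j$ and on $d\hat y^i - \hat \omega^i$ into the corresponding bounds for $\tilde \omega^i_j$ and $d\tilde y^i - \tilde \omega^i = y^*(d\hat y^i - \hat \omega^i)$, so $\tilde g \in \Vr^{\eps}_{C_o}$ for some $C_o = C_o(n)$; taking $C > C_o$ finishes the proof.

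The main subtle point is converting the $\bar W^{2,\left(\frac{n}{2},1\right)}_{\loc}$-closeness of $\tilde g$ to $g$ into the \emph{global} bound on $\tilde h - \geu$ in $\bar W^{2,\left(\frac{n}{2},1\right)}(\R^n)$ required by Proposition \ref{pr:apriori_global}. This relies on the global regularity of the change of coordinates $y$ provided by $g \in \Vr^{\eps}_C$, together with the fact that $h$ is already globally close to $\geu$ outside any large ball, so that local approximation of $\tilde g$ to $g$ on a ball large enough, combined with the a priori global bounds inherent in the definition of $\Ur^{\eps}$, suffices to close the estimate.
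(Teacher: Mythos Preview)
Your proof follows essentially the same route as the paper's: push the nearby metric forward through the existing good coordinates $y$, verify the hypotheses of Proposition~\ref{pr:apriori_global} for the pushed-forward metric, apply that proposition, and then pull the resulting diffeomorphism and coframe back via $y$. The composition identity you write for $(\tilde y^{-1})^*\tilde g$ and the relation $d\tilde y^i-\tilde\omega^i=y^*(d\hat y^i-\hat\omega^i)$ are exactly the transport steps the paper uses, and your observation that the intrinsic norms are preserved under pullback is what makes the bookkeeping work.

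You are also right to flag the passage from $W^{2,\left(\frac{n}{2},1\right)}_{\loc}$-closeness to a \emph{global} bound on $\tilde h-\geu$ as the delicate point. The paper's own proof sidesteps this by writing the neighborhood condition as the global estimate $\|g_{ij}-h_{ij}\|_{W^{2,\left(\frac{n}{2},1\right)}(\R^n,\geu)}<\delta$ rather than a local one, so it does not resolve the issue you raise either; your heuristic (exploit the global bounds already built into $\Ur^{\eps}$ together with local approximation on a large ball) is the natural way one would try to close it, though neither your argument nor the paper's makes this step fully explicit.
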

	
	\begin{proof}
		Let $g\in \Vr^{\bar{\eps}}_{\bar{C}}$, $h\in \Ur^{\bar{\eps}}$ and $\delta>0$ be such that 
		\begin{align}\label{eq:Initial_h}
			\forall i,j\in\{1,\ldots,n\},\qquad \|g_{ij} - h_{ij}\|_{W^{2,\left(\frac{n}{2},1\right)}(\R^n,\geu)} < \delta.
		\end{align}
		Since $g\in \Vr^{\bar{\eps}}_{\bar{C}}$, there exists a diffeomorphism $y\in \Diff(\R^n)$ such that 
		\begin{align*}
			\begin{aligned}
				\left\| \left( (y^{-1})^*g \right)_{\alpha\beta} - \delta_{\alpha\beta} \right\|_{\bar{W}^{2,\left(\frac{n}{2},1\right)}(\R^n)}  \leq \bar{C}\, \|\Riem^g\|_{L^{\left(\frac{n}{2},1\right)}(\R^n,g)}.
			\end{aligned}
		\end{align*}
		Up to reducing $\delta$, we obtain 
		\begin{align}\label{eq:Init_h}
			\forall \alpha,\beta\in\{1,\ldots,n\},\qquad \left\| \left( (y^{-1})^*h\right)_{\alpha\beta} - \delta_{\alpha\beta} \right\|_{\bar{W}^{2,\left(\frac{n}{2},1\right)}(\R^n)} \leq 2\, \bar{C}\, \bar{\eps}.
		\end{align}
		In particular, for $\bar{\eps}\leq \eps_1(n,\bar{C})$, we have the following inequality of matrices
		\begin{align*}
			\forall \alpha,\beta\in\{1,\ldots,n\},\qquad \left\| \left( (y^{-1})^*h\right)_{\alpha\beta} - \delta_{\alpha\beta} \right\|_{\bar{W}^{2,\left(\frac{n}{2},1\right)}(\R^n)} \leq \eps_0.
		\end{align*}
		By choosing $\bar{C}>C_0$ and $\bar{\eps}< (1+ 2\, \bar{C})^{-1}\, \eps_0$, where $\eps_0$ and $C_0$ are defined in Proposition \ref{pr:apriori_global}, we obtain $(y^{-1})^*h\in \Ur^{\eps_0}$ since
        \begin{align*}
           \left\| \Riem^{(y^{-1})^\ast h} \right\|_{L^{\left(\frac{n}{2},1\right)}\left(\R^n,(y^{-1})^\ast h \right)} = \left\| \Riem^{ h} \right\|_{L^{\left(\frac{n}{2},1\right)}\left(\R^n,h \right)} \leq \bar{\eps} < \eps_0. 
        \end{align*}
        Thus, we obtain a diffeomorphism $z\in \Diff(\R^n)$ and a coframe $(\omega^1,\ldots,\omega^n)$ for $(y^{-1})^*h$ on $\R^n$ such that 
		\begin{align}
			& \qquad \bullet \qquad dz\in\bar{W}^{2,\left(\frac{n}{2},1\right)}(\R^n),  \nonumber \\[3mm]
			& \qquad \bullet \qquad \left\| \left( ((z\circ y)^{-1})^*h \right)_{\alpha\beta} - \delta_{\alpha\beta} \right\|_{\bar{W}^{2,\left(\frac{n}{2},1\right)}(\R^n,\geu)}  \leq C_0\, \|\Riem^h\|_{L^{\left(\frac{n}{2},1\right)}(\R^n,h)}, \nonumber \\[4mm]
			& \qquad \bullet \qquad \| \omega^i_j\|_{L^{(n,1)}(\R^n,(y^{-1})^*h)} + \|\g^{(y^{-1})^*h} \omega^i_j \|_{L^{\left(\frac{n}{2},1\right)}(\R^n,(y^{-1})^*h)} \leq C_0\, \|\Riem^h\|_{L^{\left(\frac{n}{2},1\right)}(\R^n,h)}, \label{eq:CF1} \\[4mm]
			& \qquad \bullet \qquad \|dz^i - \omega^i\|_{\bar{W}^{2,\left(\frac{n}{2},1\right)}(\R^n,(y^{-1})^*h)} \leq C_0 \, \|\Riem^h\|_{L^{\left(\frac{n}{2},1\right)}(\R^n,h)}. \label{eq:CF2}
		\end{align}
		Since the pullback commutes with the exterior derivative, we obtain that the coframe $(y^*\omega^1,\ldots,y^*\omega^n)$ for $h$ satisfies the following structure equations:
		\begin{align}
			d\left( y^*\omega^i \right) = \left(y^*\omega^i_j\right)\wedge \left(y^*\omega^j\right).
		\end{align}
		By change of variable the estimates \eqref{eq:CF1} and \eqref{eq:CF2} imply that 
		\begin{align*}
			& \qquad \bullet \qquad \| y^*\omega^i_j\|_{L^{(n,1)}(\R^n,(y^{-1})^*h)} + \|\g^{h} \left(y^*\omega^i_j\right) \|_{L^{\left(\frac{n}{2},1\right)}(\R^n,h)} \leq C_0\, \|\Riem^h\|_{L^{\left(\frac{n}{2},1\right)}(\R^n,h)}, \\[4mm]
			& \qquad \bullet \qquad \left\| y^*\left(dz^i - \omega^i \right) \right\|_{\bar{W}^{2,\left(\frac{n}{2},1\right)}(\R^n,h)} \leq C_0 \, \|\Riem^h\|_{L^{\left(\frac{n}{2},1\right)}(\R^n,h)}.
		\end{align*}
		We conclude that $h\in \Vr^{\bar{\eps}}_{C_0}$ thanks to the equality $y^*(dz^i) = d(z^i\circ y)$.
	\end{proof}

	\section{Metrics on $\R^n$ of class $W^{2,\left(\frac{n}{2},2\right)}$}\label{sec:ProofSecond}
	
	In this section, we prove a slightly strong version of \Cref{th:diff_Rn}, namely we reduce the Lorentz exponent of the metric. 
		\begin{theorem}\label{th:Weak_metrics_v2}
		There exist $\eps_1>0$ and $C_1>0$ depending only on $n$ satisfying the following property. Let $g$ be a weak metric on $\R^n$ such that $\dr_k g_{ij}\in L^{(n,2)}(\R^n)$, $\dr^2_{kl} g_{ij} \in L^{\left(\frac{n}{2},2\right)}(\R^n)$ and 
		\begin{align*}
			\|\Riem^g\|_{L^{\left(\frac{n}{2},1\right)}(\R^n,g)} \leq \eps_1.
		\end{align*}
		Then there exists $y\in \BiLip(\R^n)$ and a coframe $(\omega^1,\ldots,\omega^n)$ for $g$ in $\R^n$ with connection forms $\omega^i_j$ such that the following estimates hold for all $1\leq i,j\leq n$
		\begin{align*}
			& 1) \qquad \left\|(y^*g)_{ij} - \delta_{ij} \right\|_{\bar{W}^{2,\left(\frac{n}{2},1\right)}(\R^n,\geu)} \leq C_1\, \|\Riem^g\|_{L^{\left(\frac{n}{2},1\right)}(\R^n,g)},\\[3mm]
			& 2) \qquad \left\| d(y^{-1})^i - \omega^i \right\|_{\bar{W}^{2,\left(\frac{n}{2},1\right)}(\R^n,g)} \leq C_1\,\|\Riem^g\|_{L^{\left(\frac{n}{2},1\right)}(\R^n,g)},\\[3mm]
			& 3) \qquad \| \omega^i_j\|_{L^{(n,1)}(\R^n,g)} + \|\g^g \omega^i_j \|_{L^{\left(\frac{n}{2},1\right)}(\R^n,g)} \leq C_1\, \|\Riem^g\|_{L^{\left(\frac{n}{2},1\right)}(\R^n,g)}.
		\end{align*}
	\end{theorem}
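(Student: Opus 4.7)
The plan is to reduce \Cref{th:Weak_metrics_v2} to \Cref{th:diff_Rn_v2} by mollification. Let $\rho_\eta$ denote a standard radial mollifier on $\R^n$ and set $g^\eta \coloneq \rho_\eta \ast g$, mollifying each coefficient in the Euclidean coordinates. By convexity of the cone of symmetric matrices with a fixed ellipticity bound, each $g^\eta$ is a weak metric with the same ellipticity constant $\lambda$ as $g$, and since $g^\eta$ is smooth its coefficients lie in $\bar W^{2,\left(\frac{n}{2},1\right)}(\R^n)$ with norms that may depend on $\eta$ (which is harmless because \Cref{th:diff_Rn_v2} depends only on the curvature bound). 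Using the coordinate expression \eqref{eq:Riem_coordinates}, $\Riem^{g^\eta}$ is a rational function of $g^\eta$, $(g^\eta)^{-1}$, $\dr g^\eta$ and $\dr^2 g^\eta$. Since mollification is strongly continuous on Lorentz spaces with finite second index, $\dr g^\eta \to \dr g$ in $L^{(n,2)}(\R^n)$ and $\dr^2 g^\eta \to \dr^2 g$ in $L^{\left(\frac{n}{2},2\right)}(\R^n)$. The Lorentz--Hölder inequality $L^{(n,2)}\cdot L^{(n,2)}\hookrightarrow L^{\left(\frac{n}{2},1\right)}$ then yields $\Riem^{g^\eta}\to \Riem^g$ strongly in $L^{\left(\frac{n}{2},1\right)}(\R^n)$, and by uniform ellipticity of $g^\eta$,
\[\|\Riem^{g^\eta}\|_{L^{\left(\frac{n}{2},1\right)}(\R^n,g^\eta)}\xrightarrow[\eta\to 0]{} \|\Riem^g\|_{L^{\left(\frac{n}{2},1\right)}(\R^n,g)}.\]

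Choosing $\eps_1<\eps_0/2$, for $\eta$ sufficiently small \Cref{th:diff_Rn_v2} applies to $g^\eta$ and provides $C^1$-diffeomorphisms $y^\eta \in \Diff(\R^n)$ and coframes $(\omega^{\eta,1},\ldots,\omega^{\eta,n})$ for $g^\eta$ satisfying its three conclusions with the universal constant $C_0$ and $\|\Riem^{g^\eta}\|$ on the right-hand side. By \Cref{cl:y_bilip} underlying that construction, the maps $y^\eta$ are uniformly bi-Lipschitz (with bi-Lipschitz constant bounded by $100$). After normalizing $y^\eta(0)=0$, Arzelà--Ascoli produces a subsequence $y^\eta\to y$ locally uniformly on $\R^n$, with $y\in \BiLip(\R^n)$ (surjectivity follows from the uniform lower Lipschitz bound and the connectedness of $\R^n$). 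The uniform $\bar W^{2,\left(\frac{n}{2},1\right)}$ bound on $d(y^{\eta,-1})-\omega^{\eta,i}$ and the uniform $W^{1,(n,1)}$ bound on $\omega^{\eta,i}$, combined with the compact embedding $W^{2,\left(\frac{n}{2},1\right)}(\Omega)\hookrightarrow C^0(\Omega)$ on bounded domains, upgrade this to $C^0_{\loc}$ convergence of $dy^\eta\to dy$, $d(y^{\eta,-1})\to d(y^{-1})$ and $\omega^{\eta,i}\to \omega^i$, together with weak convergence in $\bar W^{2,\left(\frac{n}{2},1\right)}_{\loc}$.

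The main technical issue, and the expected obstacle, is identifying the weak limit of $(y^\eta)^*g^\eta$ with the distributional pullback $y^*g$, despite $g$ need not being continuous (since $W^{2,\left(\frac{n}{2},2\right)}\not\hookrightarrow C^0$). The key observation is that from $g\in W^{1,(n,2)}\subset W^{1,n}_{\loc}(\R^n)$ and the subcritical embedding $W^{1,n}(\Omega)\hookrightarrow L^q(\Omega)$ for every $q<\infty$, one deduces $g^\eta\to g$ strongly in $L^q_{\loc}(\R^n)$ for every $q<\infty$. Composing with the uniformly bi-Lipschitz maps $y^\eta$ preserves strong $L^q_{\loc}$ convergence, so $g^\eta\circ y^\eta\to g\circ y$ in $L^q_{\loc}$; combined with the $C^0_{\loc}$ convergence of $dy^\eta$, this identifies $(y^\eta)^*g^\eta\to y^*g$ in $L^q_{\loc}$ for every $q<\infty$. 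Lower semicontinuity of the Lorentz--Sobolev norms applied to the uniform bound on $(y^\eta)^*g^\eta-\delta$ then provides conclusion 1) of \Cref{th:Weak_metrics_v2} with $C_1\coloneq C_0$; conclusions 2) and 3) follow analogously by passing to the weak limit on $d(y^{\eta,-1})-\omega^{\eta,i}$ and on $\omega^{\eta,i}$, using that composition with a uniformly bi-Lipschitz change of variables is continuous on the Lorentz--Sobolev spaces involved.
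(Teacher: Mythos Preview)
Your overall strategy is the same as the paper's---mollify, apply \Cref{th:diff_Rn_v2} to the smooth approximations, and pass to the limit---but the step where you claim $\Riem^{g^\eta}\to\Riem^g$ strongly in $L^{\left(\frac{n}{2},1\right)}(\R^n)$ has a genuine gap. You justify this by citing $\dr g^\eta\to\dr g$ in $L^{(n,2)}$, $\dr^2 g^\eta\to\dr^2 g$ in $L^{\left(\frac{n}{2},2\right)}$, and the product rule $L^{(n,2)}\cdot L^{(n,2)}\hookrightarrow L^{\left(\frac{n}{2},1\right)}$. The Lorentz--H\"older inequality handles only the quadratic first-derivative terms in $\Riem$; the linear second-derivative terms converge a~priori only in $L^{\left(\frac{n}{2},2\right)}$, not $L^{\left(\frac{n}{2},1\right)}$, and this is strictly weaker. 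Without control in $L^{\left(\frac{n}{2},1\right)}$ you can neither verify the smallness hypothesis of \Cref{th:diff_Rn_v2} for $g^\eta$ nor obtain the correct right-hand side after passing to the limit.

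The fix, which is the key idea in the paper's proof, is an algebraic observation about the structure of the curvature. If one expands $\Riem^g_{ijkl}$ fully in coordinates (lowering all indices), the terms split as $\Riem^g = A(g)+B(g)$ where $A(g)$ is a \emph{constant-coefficient} linear combination of the entries $\dr^2_{\alpha\beta}g_{\gamma\delta}$ (no factors of $g^{-1}$ survive) and $B(g)$ is quadratic in $\dr g$ with bounded coefficients. Since $A$ has constant coefficients, it commutes with mollification: $A(g^\eta)=\rho_\eta\ast A(g)=\rho_\eta\ast\big(\Riem^g-B(g)\big)$. Now the hypothesis $\Riem^g\in L^{\left(\frac{n}{2},1\right)}$ together with $B(g)\in L^{\left(\frac{n}{2},1\right)}$ (from $\dr g\in L^{(n,2)}$ and H\"older) give $A(g)\in L^{\left(\frac{n}{2},1\right)}$, and strong continuity of mollifiers on $L^{\left(\frac{n}{2},1\right)}$ yields $A(g^\eta)\to A(g)$ there. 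Combined with $B(g^\eta)\to B(g)$ in $L^{\left(\frac{n}{2},1\right)}$ (your H\"older argument), this gives the desired convergence. In short: the second-derivative part of $\Riem^{g^\eta}$ is controlled not because each $\dr^2 g^\eta$ is, but because the \emph{specific combination} $A(g)$ inherits $L^{\left(\frac{n}{2},1\right)}$ regularity from the curvature hypothesis itself.

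Two minor remarks. First, your assertion that smoothness of $g^\eta$ implies $g^\eta\in\bar W^{2,\left(\frac{n}{2},1\right)}(\R^n)$ is not automatic on all of $\R^n$ (smoothness gives no global integrability); the paper is also terse on this point. Second, the bi-Lipschitz constant for $y^\eta$ is not the universal $100$ from \Cref{cl:y_bilip}, since that claim is proved under the additional closeness-to-Euclidean hypothesis \eqref{eq:Initial_g}; here one obtains a constant depending on the ellipticity $\lambda$ of $g$ by combining the $L^\infty$-part of conclusion 1) of \Cref{th:diff_Rn_v2} with $\lambda^{-1}\delta\le g^\eta\le\lambda\delta$. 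This is harmless for the limiting argument but worth stating correctly.
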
 
	
	To prove the above result, we approximate a given metric $g\in \bar{W}^{2,\left(\frac{n}{2},2\right)}(\R^n)$ by $C^{\infty}$ metrics $(g^{\delta})_{\delta>0}$ obtained by convolution with a mollifier. We prove that if the Riemann tensor of $g$ lies in $L^{\left(\frac{n}{2},1\right)}(\R^n)$, then the one of $g^{\delta}$ also, with estimates. Thus, we can apply \Cref{th:diff_Rn_v2} to get coordinates for $g^{\delta}$ with control in $\bar{W}^{2,\left(\frac{n}{2},1\right)}(\R^n)$. Since these coordinates come with an orthonormal coframe, we prove that these coordinates converge.
	
	\begin{proof}[Proof of \Cref{th:Weak_metrics_v2}.]
		Let $g$ be a weak metric satisfying the assumptions of \Cref{th:Weak_metrics_v2}. By Definition \ref{def:Weak_metric}, there exists $\lambda>1$ such that the following pointwise inequality for matrices hold almost everywhere
		\begin{align}\label{eq:Ineq_g}
			\lambda^{-1}\, \delta_{ij} \leq g_{ij} \leq \lambda\, \delta_{ij} \qquad \text{ a.e. in }\R^n.
		\end{align}
		Consider a family $(g^{\delta})_{\delta>0}$ of $C^{\infty}$-metrics that converges to $g$ as $\delta\to 0$ strongly in $W^{2,\left( \frac{n}{2},2\right)}_{\loc}(\R^n,\geu)$ and almost everywhere. We can choose each $g^{\delta}$ to be given by the convolution of $g$ with a mollifier, that is to say, if $\vp\in C^{\infty}_c(\B^n;\R^+)$ with $\int_{\B^n}\vp(x)\ dx=1$, then we consider
		\begin{align*}
			\forall x\in \R^n,\qquad g^{\delta}_{\alpha\beta}(x) \coloneqq \int_{\B^n(0,\delta)} \frac{1}{\delta^n} \vp\left( \frac{y}{\delta}\right)\, g_{\alpha\beta}(x-y)\, dy.
		\end{align*}	
		As a consequence of \eqref{eq:Ineq_g}, we have the following uniform ellipticity property for $g^{\delta}$
		\begin{align}\label{eq:Ineq_gd}
			\lambda^{-1}\, \delta_{\alpha\beta} \leq g^{\delta}_{\alpha\beta} \leq \lambda\, \delta_{\alpha\beta} \qquad \text{ in }\R^n.
		\end{align}
        Indeed, for any fixed vector $X\in \R^n$ and $p\in\R^n$, we have 
        \begin{align*}
            g^{\delta}_{\alpha\beta}(p)\, X^{\alpha}\, X^{\beta} = \int_{\B^n(0,\delta)} \frac{1}{\delta^n} \vp\left(\frac{y}{\delta}\right)\, g_{\alpha\beta}(p-y)\, X^{\alpha}\, X^{\beta}\, dy.
        \end{align*}
        Since $\vp\geq 0$, we obtain from \eqref{eq:Ineq_g} that 
        \begin{align*}
            g^{\delta}_{\alpha\beta}(p)\, X^{\alpha}\, X^{\beta} \leq \int_{\B^n(0,\delta)} \frac{1}{\delta^n} \vp\left(\frac{y}{\delta}\right)\, \lambda\, |X|^2_{\geu}\, dy = \lambda\, |X|^2_{\geu}.
        \end{align*}
        We also have the reverse inequality for the same reason
        \begin{align*}
            g^{\delta}_{\alpha\beta}(p)\, X^{\alpha}\, X^{\beta} \geq \lambda^{-1}\, |X|^2_{\geu}.
        \end{align*}
        
		As a consequence of \eqref{eq:Ineq_gd}, we obtain the convergence of the Christoffel symbols of $g^{\delta}$ by dominated convergence as $\delta\to 0$:
		\begin{align*}
			{^{g^{\delta}}\Gamma}^k_{ij} = \frac{1}{2}\, (g^{\delta})^{kl}\, \Big( \dr_i g^{\delta}_{jl} + \dr_j g^{\delta}_{il} - \dr_l g^{\delta}_{ij}\Big) \xrightarrow[\delta\to 0]{} {^g \Gamma}_{ij}^k \qquad \text{strongly in }W^{1,\left(\frac{n}{2},2\right)}(\R^n).
		\end{align*}
		The Riemann tensor is given by \eqref{eq:Riem_coordinates}
		\begin{align*}
			(\Riem^g)_{ijk}^{\ \ \ l} =  \dr_i ({^{g} \Gamma}^l_{jk}) - \dr_j({^{g}\Gamma}^l_{ik}) + ({^{g}\Gamma}^m_{jk})\, ({^{g}\Gamma}^l_{im})-({^{g}\Gamma}^m_{ik})\, ({^{g}\Gamma}^l_{jm}).
		\end{align*}
		Lowering the last index, we have 
		\begin{align*}
			\Riem^g_{ijkp} & = g_{pl} \left[ \dr_i ({^{g} \Gamma}^l_{jk}) - \dr_j({^{g}\Gamma}^l_{ik}) + ({^{g}\Gamma}^m_{jk})\, ({^{g}\Gamma}^l_{im})-({^{g}\Gamma}^m_{ik})\, ({^{g}\Gamma}^l_{jm}) \right] \\[3mm]
			& = \frac{1}{2}\, g_{pl} \left[ \dr_i\left( g^{ql}(\dr_j g_{kq} + \dr_k g_{jq} - \dr_q g_{jk} ) \right) - \dr_j \left( g^{lq}(\dr_i g_{kq} + \dr_k g_{iq} - \dr_q g_{ik}) \right) \right] \\[3mm]
			& \qquad + \frac{1}{4}\, (\dr_i g_{pm} + \dr_m g_{ip} - \dr_p g_{im} )\, g^{mq}\, (\dr_j g_{kq} + \dr_k g_{jq} - \dr_q g_{jk}) \\[3mm]
			& \qquad -\frac{1}{4}\, (\dr_j g_{pm} + \dr_m g_{jp} - \dr_p g_{jm} )\, g^{mq} (\dr_i g_{kq} + \dr_k g_{iq} - \dr_q g_{ik}).
		\end{align*}
		We expand the derivatives of the first line and obtain 
		\begin{align*}
			\Riem^g_{ijkp}& = \frac{1}{2}\, g_{pl} \Bigg[ -g^{q\alpha}\, (\dr_i g_{\alpha\beta})\, g^{\beta l}\, (\dr_j g_{kq} + \dr_k g_{jq} - \dr_q g_{jk} ) +   g^{ql}(\dr^2_{ij} g_{kq} + \dr^2_{ik} g_{jq} - \dr^2_{iq} g_{jk} ) \\[3mm]
			& \qquad + g^{l\alpha}\, (\dr_j g_{\alpha\beta})\, g^{\beta q}(\dr_i g_{kq} + \dr_k g_{iq} - \dr_q g_{ik})  - g^{lq}(\dr^2_{ij} g_{kq} + \dr^2_{jk} g_{iq} - \dr^2_{jq} g_{ik}) \Bigg] \\[3mm]
			& \qquad + \frac{1}{4}\, (\dr_i g_{pm} + \dr_m g_{ip} - \dr_p g_{im} )\, g^{mq}\, (\dr_j g_{kq} + \dr_k g_{jq} - \dr_q g_{jk}) \\[3mm]
			& \qquad -\frac{1}{4}\, (\dr_j g_{pm} + \dr_m g_{jp} - \dr_p g_{jm} )\, g^{mq} (\dr_i g_{kq} + \dr_k g_{iq} - \dr_q g_{ik}).
		\end{align*}
		We rearrange the terms and end up with the following expression
		\begin{align}
			\Riem^g_{ijkp}& = \frac{1}{2}\,  \Big[ \dr^2_{ij} g_{kp} + \dr^2_{ik} g_{jp} - \dr^2_{ip} g_{jk}  - \dr^2_{ij} g_{kp} + \dr^2_{jk} g_{ip} - \dr^2_{jp} g_{ik} \Big] \label{eq:second_der_g} \\[3mm]
			&\qquad + \frac{1}{2}\, \Big[ -g^{q\alpha}\, (\dr_i g_{\alpha p})\, (\dr_j g_{kq} + \dr_k g_{jq} - \dr_q g_{jk} )  +(\dr_j g_{p \beta})\, g^{\beta q}(\dr_i g_{kq} + \dr_k g_{iq} - \dr_q g_{ik})  \Big] \nonumber \\[3mm]
			& \qquad + \frac{1}{4}\, (\dr_i g_{pm} + \dr_m g_{ip} - \dr_p g_{im} )\, g^{mq}\, (\dr_j g_{kq} + \dr_k g_{jq} - \dr_q g_{jk}) \nonumber \\[3mm]
			& \qquad -\frac{1}{4}\, (\dr_j g_{pm} + \dr_m g_{jp} - \dr_p g_{jm} )\, g^{mq} (\dr_i g_{kq} + \dr_k g_{iq} - \dr_q g_{ik}).\nonumber
		\end{align}
        In order to shorten the notations, we introduce the decomposition $\Riem^g_{ijkl} = A_{ijkl}(g) + B_{ijkl}(g)$, where
        \begin{align*}
            & A_{ijkl}(g) \coloneq \frac{1}{2}\,  \Big[ \dr^2_{ij} g_{kp} + \dr^2_{ik} g_{jp} - \dr^2_{ip} g_{jk}  - \dr^2_{ij} g_{kp} + \dr^2_{jk} g_{ip} - \dr^2_{jp} g_{ik} \Big], \\[2mm]
            & B_{ijkl}(g) \coloneq \frac{1}{2}\, \Big[ -g^{q\alpha}\, (\dr_i g_{\alpha p})\, (\dr_j g_{kq} + \dr_k g_{jq} - \dr_q g_{jk} )  +(\dr_j g_{p \beta})\, g^{\beta q}(\dr_i g_{kq} + \dr_k g_{iq} - \dr_q g_{ik})  \Big]  \\[3mm]
			& \qquad + \frac{1}{4}\, (\dr_i g_{pm} + \dr_m g_{ip} - \dr_p g_{im} )\, g^{mq}\, (\dr_j g_{kq} + \dr_k g_{jq} - \dr_q g_{jk})  \\[3mm]
			& \qquad -\frac{1}{4}\, (\dr_j g_{pm} + \dr_m g_{jp} - \dr_p g_{jm} )\, g^{mq} (\dr_i g_{kq} + \dr_k g_{iq} - \dr_q g_{ik}).
        \end{align*}
		Since $\Riem^g\in L^{\left(\frac{n}{2},1\right)}(\R^n,g) = L^{\left(\frac{n}{2},1\right)}(\R^n,\geu)$ (the equality of spaces follows from \eqref{eq:Ineq_g}) and the metrics $g_{\delta}$ are given by convolution of $g$ with a mollifier, we obtain that the line \eqref{eq:second_der_g} also lies in $L^{\left(\frac{n}{2},1\right)}(\R^n,\geu)$.
        Indeed, we estimate the Riemann tensor of $g^{\delta}$ using the decomposition obtained above. On one hand, we have 
        \begin{align*}
            A_{ijkl}(g^{\delta})  = \frac{1}{\delta^n} \vp\left(\frac{\cdot}{\delta}\right) * A_{ijkl}(g) = \frac{1}{\delta^n} \vp\left(\frac{\cdot}{\delta}\right) *\left[ \Riem^g_{ijkl} - B_{ijkl}(g) \right].
        \end{align*}
        Since $B_{ijkl}(g)$ is quadratic in the derivatives of $g$, we obtain 
        \begin{align*}
            \left\| A_{ijkl}(g^{\delta}) \right\|_{L^{\left(\frac{n}{2},1\right)}(\R^n,\geu)} & \leq C(n,\lambda)\left( \|\Riem^g\|_{L^{\left(\frac{n}{2},1\right)}(\R^n,g)} +  \left\| B_{ijkl}(g) \right\|_{L^{\left(\frac{n}{2},1\right)}(\R^n,\geu)} \right) \\[2mm]
            & \leq C(n,\lambda)\left( \|\Riem^g\|_{L^{\left(\frac{n}{2},1\right)}(\R^n,g)} +  \sum_{\alpha,\beta,\gamma} \left\| \dr_{\alpha} g_{\beta\gamma} \right\|_{L^{\left(n,2\right)}(\R^n,\geu)}^2 \right).
        \end{align*}
        As well for $B_{ijkl}(g^{\delta})$, using \eqref{eq:Ineq_gd}, we have 
        \begin{align*}
            \left\| B_{ijkl}(g^{\delta}) \right\|_{L^{\left(\frac{n}{2},1\right)}(\R^n,\geu)} & \leq C(n,\lambda)\, \sum_{\alpha,\beta,\gamma} \left\| \dr_{\alpha} g_{\beta\gamma}^{\delta} \right\|_{L^{\left(n,2\right)}(\R^n,\geu)}^2 \\[2mm]
            & \leq C(n,\lambda) \sum_{\alpha,\beta,\gamma} \left\| \dr_{\alpha} g_{\beta\gamma} \right\|_{L^{\left(n,2\right)}(\R^n,\geu)}^2 .
        \end{align*}
        Hence, we can pass to the limit $\delta\to0$ both in the linear and the quadratic terms. We obtain the strong convergence of the Riemann tensor as $\delta\to 0$:
		\begin{align*}
			\Riem^{g^{\delta}}_{ijkl} \xrightarrow[\delta\to 0]{} \Riem^g_{ijkl} \qquad \text{strongly in }L^{\left( \frac{n}{2},1\right)}(\R^n).
		\end{align*}
		Hence, we deduce that for $0<\delta<\delta_0$ large enough (by dominated convergence and using that $g^{\delta}$ converges a.e. to $g$), it holds
		\begin{align*}
			\|\Riem^{g^{\delta}}\|_{L^{\left(\frac{n}{2},1\right)}(\R^n,g_{\delta})} \leq  \|\Riem^g\|_{L^{\left( \frac{n}{2},1\right)}(\R^n,g)} +\eps_1 \leq 2\, \eps_1.
		\end{align*}
		Since $g^{\delta}$ converges a.e. to $g$, there exists a Lebesgue point $x_0\in \B$ such that $g(x_0)$ is well-defined and $g^{\delta}(x_0)\to g(x_0)$ as $\delta\to 0$. Thus, up to a linear change of coordinates, we can assume that $g^{\delta}_{ij}(x_0)=\delta_{ij} = g_{ij}(x_0)$. Letting $\eps_1\coloneq \eps_0/2$, we can now apply \Cref{th:diff_Rn_v2} to the metrics $g^{\delta}$.\\
		
		For $\delta\in(0,\delta_0)$, there exist $y_{\delta}\in \Diff(\R^n)$ and a coframe $(\omega^{\delta,1},\ldots,\omega^{\delta,n})$ for $g^{\delta}$ in $\R^n$ with connection forms $\omega^{\delta,i}_{\ \ j}$ such that the following estimates hold for all $1\leq i,j\leq n$
		\begin{align}
			& \bullet \qquad \left\|(y_{\delta}^*g^{\delta})_{ij} - \delta_{ij} \right\|_{\bar{W}^{2,\left(\frac{n}{2},1\right)}(\R^n,\geu)} \leq C_0\, \left\| \Riem^{g^{\delta}} \right\|_{L^{\left(\frac{n}{2},1\right)}\left( \R^n,g^{\delta} \right)}, \label{eq:gd_good} \\[3mm]
			& \bullet \qquad \left\| d(y^{-1}_{\delta})^i - \omega^{\delta,i} \right\|_{\bar{W}^{2,\left(\frac{n}{2},1\right)}\left( \R^n,g^{\delta} \right)} \leq C_0\, \left\|\Riem^{g^{\delta}} \right\|_{L^{\left(\frac{n}{2},1\right)} \left(\R^n,g^{\delta} \right)}, \label{eq:Linfty_yinv_bad}\\[3mm]
			& \bullet \qquad \left\| \omega^{\delta,i}_{\ \ j} \right\|_{L^{(n,1)} \left( \R^n,g^{\delta} \right)} + \left\|\g^{g^{\delta}} \omega^{\delta,i}_{\ \ j} \right\|_{L^{\left(\frac{n}{2},1\right)}\left(\R^n,g^{\delta} \right)} \leq C\, \left\|\Riem^{g^{\delta}} \right\|_{L^{\left(\frac{n}{2},1\right)} \left(\R^n,g^{\delta} \right)}.\label{eq:Conn_bad1}
		\end{align} 
		Since the Christoffel symbols of $g$ are merely in $L^{(n,2)}(\R^n)$ and their first derivates in $L^{\left(\frac{n}{2},2\right)}(\R^n)$, we obtain that the space $\bar{W}^{2,\left(\frac{n}{2},1\right)}\left( \R^n,g \right)$ is a priori not well-defined. However, we have the following uniform versions of the last two estimates. Concerning \eqref{eq:Linfty_yinv_bad}, we have the uniform estimate in $\delta$
        \begin{align}\label{eq:Linfty_yinv}
        \begin{aligned} 
            & \left\| d(y^{-1}_{\delta})^i - \omega^{\delta,i} \right\|_{L^{\infty}\left( \R^n,g^{\delta} \right)} + \left\|\g^{g^{\delta}}\left( d(y^{-1}_{\delta})^i - \omega^{\delta,i} \right)\right\|_{L^{(n,2)}\left( \R^n,g^{\delta} \right)} + \left\| (\g^{g^{\delta}})^2\left( d(y^{-1}_{\delta})^i - \omega^{\delta,i}\right) \right\|_{L^{\left(\frac{n}{2},2\right)}\left( \R^n,g^{\delta} \right)} \\[2mm]
            & \leq C_0\, \left\|\Riem^{g^{\delta}} \right\|_{L^{\left(\frac{n}{2},1\right)} \left(\R^n,g^{\delta} \right)}.
            \end{aligned}
        \end{align}
        Whereas for \eqref{eq:Conn_bad1}, we have
		\begin{align}
			\left\| \omega^{\delta,i}_{\ \ j} \right\|_{L^{(n,1)} \left( \R^n,g^{\delta} \right)} + \left\|\g^{g^{\delta}} \omega^{\delta,i}_{\ \ j} \right\|_{L^{\left(\frac{n}{2},2\right)}\left(\R^n,g^{\delta} \right)} \leq C\, \left\|\Riem^{g^{\delta}} \right\|_{L^{\left(\frac{n}{2},1\right)} \left(\R^n,g^{\delta} \right)}. \label{eq:Conn_bad}
		\end{align} 
		Thanks to \eqref{eq:Ineq_gd}, we obtain that both $dy_{\delta}$ and $d(y_{\delta}^{-1})$ are bounded in $L^{\infty}(\R^d)$. Indeed, combining \eqref{eq:Ineq_gd} and \eqref{eq:Linfty_yinv}, we obtain 
        \begin{align}\label{eq:Linfty_yinv2}
            \left\|d(y^{-1}_{\delta})^i \right\|_{L^{\infty}(\R^n)} \leq C_0\, \eps_0 + \|\omega^{\delta,i}\|_{L^{\infty}(\R^n)} \leq C_0\, \eps_0 + \lambda.
        \end{align}
        On the other hand, the map $y_{\delta}$ is uniformly bi-Lispchitz. By \eqref{eq:gd_good}, we have for any $X\in\R^n$
        \begin{align*}
            \left\| |dy_{\delta}(X)|^2_g - |X|_{\geu} \right\|_{L^{\infty}(\R^n,\geu)} \leq C_0\, \eps_1\, |X|_{\geu}.
        \end{align*}
        Hence, it holds
        \begin{align*}
            |dy_{\delta}(X)|^2_g  \leq (1+C_0\, \eps_1)\, |X|_{\geu}^2 \qquad \text{ and }\qquad (1-C_0\,\eps_1)\, |X|^2_{\geu} \leq |dy_{\delta}(X)|^2_g.
        \end{align*}
        By choosing $\eps_1=1/(2C_0)$ and using \eqref{eq:Ineq_gd}, we obtain
        \begin{align*}
            \lambda^{-1} |dy_{\delta}(X)|^2_{\geu}  \leq \frac{3}{2}\, |X|_{\geu}^2 \qquad \text{ and }\qquad \frac{1}{2}\, |X|^2_{\geu} \leq \lambda\,|dy_{\delta}(X)|^2_{\geu}.
        \end{align*}
        Combining \eqref{eq:Linfty_yinv2} and the above estimate, we deduce that the maps $y_{\delta}$ are uniformly bi-Lipschitz. As well, their first derivatives are bounded in $L^{(n,2)}(\R^d)$ and second derivatives in $L^{\left(\frac{n}{2},2\right)}(\R^n)$. Up to a translation, we can pass to the limit in the relation $y_{\delta}\circ y_{\delta}^{-1} = y_{\delta}^{-1}\circ y_{\delta}=\Id_{\R^n}$ and obtain a limit $y\colon \R^n\to \R^n$ which is a bi-Lipschitz homeomorphism of $\R^n$ with $dy,d(y^{-1})\in \bar{W}^{2,\left(\frac{n}{2},2\right)}(\R^n)$.
		\end{proof}
		
		Up to a small perturbation of the diffeomorphism constructed in \Cref{th:Weak_metrics_v2}, we obtain harmonic coordinates.
		
			\begin{theorem}\label{th:Weak_metrics_harm_v2}
			There exist $\eps_2>0$ and $C_2>0$ depending only on $n$ satisfying the following property. Let $g$ be a weak metric on $\R^n$ such that $\dr_k g_{ij}\in L^{(n,2)}(\R^n)$, $\dr^2_{kl} g_{ij} \in L^{\left(\frac{n}{2},2\right)}(\R^n)$ and 
			\begin{align*}
				\|\Riem^g\|_{L^{\left(\frac{n}{2},1\right)}(\R^n,g)} \leq \eps_2.
			\end{align*}
			Then there exists a map $z\in \BiLip(\R^n)$ such that $\dr^2_{ij}z,\dr^2_{ij}(z^{-1})\in L^{\left(n,2\right)}(\R^n)$ and $\dr^3_{ijk}z,\dr^3_{ijk}(z^{-1})\in L^{\left(\frac{n}{2},2\right)}(\R^n)$ providing harmonic coordinates for $g$ on $\R^n$ and such that the following estimates hold for all $1\leq i,j\leq n$
			\begin{align*}
				\left\|(z^*g)_{ij} - \delta_{ij} \right\|_{\bar{W}^{2,\left(\frac{n}{2},1\right)}(\R^n,\geu)} \leq C_2\, \|\Riem^g\|_{L^{\left(\frac{n}{2},1\right)}(\R^n,g)}.
			\end{align*}
		\end{theorem}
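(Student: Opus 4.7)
The plan is to realize the harmonic coordinates as a small perturbation of the bi-Lipschitz map supplied by Theorem~\ref{th:Weak_metrics_v2}, along the lines of the schematic~\eqref{eq:perturb}. For $\eps_2 \le \eps_1$, apply Theorem~\ref{th:Weak_metrics_v2} to obtain $y \in \BiLip(\R^n)$ such that $\tilde g := y^*g$ satisfies $\|\tilde g - \geu\|_{\bar W^{2,(n/2,1)}(\R^n)} \le C_1\,\|\Riem^g\|_{L^{(n/2,1)}(\R^n,g)}$. Since $y$ is an isometry $(\R^n,\tilde g) \to (\R^n,g)$, one has $\lap_g(u \circ y^{-1}) = (\lap_{\tilde g} u) \circ y^{-1}$. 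Hence, if I can produce a bi-Lipschitz homeomorphism $w\colon\R^n\to\R^n$ close to the identity with $\lap_{\tilde g} w^i = 0$ for each $i$, then $z := y\circ w^{-1}$ will give harmonic coordinates for $g$: the functions $(z^{-1})^i = w^i \circ y^{-1}$ automatically satisfy $\lap_g (z^{-1})^i = 0$.

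\medskip

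\noindent\textbf{Solving the perturbed Laplace equation on $\R^n$.} Writing $w = \Id + v$, the system $\lap_{\tilde g} w^i = 0$ rearranges as
\begin{equation*}
\lap v^i = \tilde g^{\alpha\beta}\,{^{\tilde g}\Gamma^{i}_{\alpha\beta}} + (\delta^{\alpha\beta} - \tilde g^{\alpha\beta})\,\dr^2_{\alpha\beta} v^i + \tilde g^{\alpha\beta}\,{^{\tilde g}\Gamma^{\gamma}_{\alpha\beta}}\,\dr_\gamma v^i.
\end{equation*}
The source term is a bilinear expression in $\tilde g^{-1}$ and $\dr\tilde g$, so it lies in $L^{(n,1)}(\R^n)$ with derivative in $L^{(n/2,1)}(\R^n)$, both controlled by $\|\Riem^g\|_{L^{(n/2,1)}}$. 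I set up a Banach fixed-point argument in the scale-invariant Lorentz--Sobolev space combining $\|v\|_{L^\infty}$, $\|\dr v\|_{L^{(n,1)}}$, $\|\dr^2 v\|_{L^{(n,1)}}$ and $\|\dr^3 v\|_{L^{(n/2,1)}}$, inverting $\lap$ via the Newtonian potential on $\R^n$ and using Calder\'on--Zygmund together with the endpoint embedding $W^{1,(n,1)}(\R^n) \hookrightarrow L^\infty(\R^n)$. The smallness of $\|\tilde g - \geu\|_{\bar W^{2,(n/2,1)}}$ makes the nonlinear right-hand side contractive on a small ball, producing a unique $v$ with $\|\dr v\|_{L^\infty}\le C\,\|\Riem^g\|_{L^{(n/2,1)}}$. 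For $\eps_2$ small enough, $w := \Id + v$ is then a bi-Lipschitz homeomorphism of $\R^n$ exactly as in Claim~\ref{cl:y_bilip}.

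\medskip

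\noindent\textbf{Conclusion and final estimate.} Setting $z := y \circ w^{-1}$, the chain rule combined with the bounds $\dr^2 y, \dr^2(y^{-1}) \in L^{(n,2)}$ and $\dr^3 y, \dr^3(y^{-1}) \in L^{(n/2,2)}$ coming from Theorem~\ref{th:Weak_metrics_v2}, together with the stronger $L^{(n,1)}$ and $L^{(n/2,1)}$ estimates on $w$ and $w^{-1}$ obtained in Step~2, yields the regularity claimed for $z$ and $z^{-1}$. To get the sharp metric bound, observe that by construction the coordinates $x^i$ are harmonic for $z^*g$, so ${^{z^*g}\Gamma^r} = 0$ and the Ricci identity~\eqref{eq:Ricci} collapses to the elliptic equation
\begin{equation*}
-\tfrac{1}{2}(z^*g)^{\alpha\beta}\,\dr^2_{\alpha\beta}(z^*g)_{ij} = \Ric^{z^*g}_{ij} - Q_{ij}\bigl(z^*g,\dr(z^*g)\bigr),
\end{equation*}
whose right-hand side lies in $L^{(n/2,1)}$ with norm $\le C\,\|\Riem^g\|_{L^{(n/2,1)}}$. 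Applying Lorentz Calder\'on--Zygmund to this operator (which is a small perturbation of $\lap$, thanks to the a priori $L^\infty$-closeness of $z^*g$ to $\geu$ inherited from $\tilde g$ via composition with $w^{-1}$) upgrades the regularity of $(z^*g)_{ij} - \delta_{ij}$ to the sharp bound $\|(z^*g)_{ij} - \delta_{ij}\|_{\bar W^{2,(n/2,1)}} \le C_2\,\|\Riem^g\|_{L^{(n/2,1)}}$.

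\medskip

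\noindent\textbf{Main obstacle.} The delicate step is the second one: solving a global Laplace-type equation on the non-compact space $\R^n$ in scale-invariant Lorentz--Sobolev spaces, and producing a solution $v$ that is simultaneously bounded and decaying at infinity, so that $w = \Id + v$ is actually surjective and not merely a local diffeomorphism. This relies crucially on the Lorentz refinement $L^{(n,1)}$ and the endpoint embedding $W^{1,(n,1)} \hookrightarrow L^\infty$, which fails for the larger space $W^{1,n}$; this is precisely why the assumption $\Riem^g \in L^{(n/2,1)}$ (not merely $L^{n/2}$) drives the whole construction.
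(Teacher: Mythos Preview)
Your overall strategy coincides with the paper's: apply \Cref{th:Weak_metrics_v2} to reduce to a near-Euclidean metric $h=y^\ast g$, then perturb the identity into a harmonic map for $h$. The differences are tactical. First, the paper does not run a global fixed-point on $\R^n$; instead it solves the Dirichlet problems $\lap_h \tilde z^i_s=0$ on $\B_s$ with $\tilde z^i_s=x^i$ on $\dr\B_s$, obtains the scale-invariant bound $\|d(\tilde z^i_s-x^i)\|_{\bar W^{2,(n/2,1)}(\B_s)}\le C\|\Riem^g\|_{L^{(n/2,1)}}$ uniformly in $s$ by treating $\lap_h$ as a perturbation of $\lap_{\geu}$, and then lets $s\to\infty$. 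This sidesteps the delicate global inversion of $\lap$ on $\R^n$; in particular, note that your listed norms $\|v\|_{L^\infty}$ and $\|\dr v\|_{L^{(n,1)}}$ are not the ones the Newtonian potential actually controls from an $L^{(n,1)}$ source (you get $\|\dr v\|_{L^\infty}$, $\|\dr^2 v\|_{L^{(n,1)}}$, $\|\dr^3 v\|_{L^{(n/2,1)}}$, and $v$ itself need not be bounded), so the fixed-point space should be set up on $dv$ modulo constants rather than on $v$. Second, for the final metric estimate the paper does \emph{not} invoke the Ricci identity and a Calder\'on--Zygmund bootstrap: since $\tilde z$ is $C^1$-close to the identity with $d\tilde z-I_n$ small in $\bar W^{2,(n/2,1)}$ and $h-\geu$ is already small in the same norm, a direct algebraic expansion of $(\tilde z^\ast h)_{ij}-\delta_{ij}$ (see \eqref{eq:est_harm_coord}) immediately yields the bound. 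Your Ricci--CZ argument would also work once you know a priori that $\dr^2(z^\ast g)\in L^{(n/2,1)}$ so that the perturbation can be absorbed, but that a priori information already comes from the chain-rule computation, which by itself gives the quantitative estimate---so the elliptic step is redundant.
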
 
		
		\begin{proof}
			Thanks to \Cref{th:Weak_metrics_v2}, if $\eps_2<\eps_1$, then there exists $y\in \BiLip(\R^n)$ such that for all $1\leq i,j\leq n$
			\begin{align}\label{eq:controlh}
				\left\|(y^*g)_{ij} - \delta_{ij} \right\|_{\bar{W}^{2,\left(\frac{n}{2},1\right)}(\R^n,\geu)} \leq C_2\, \|\Riem^g\|_{L^{\left(\frac{n}{2},1\right)}(\R^n,g)}.
			\end{align}
			For simplicity, we will denote $h\coloneq y^*g$. For $s>0$ and $i\in\{1,\ldots,n\}$, we consider the solution $\tilde{z}^i_s\colon \B_s\to \R$ to the elliptic equation
			\begin{align*}
				\begin{cases}
					\lap_h \tilde{z}_s^i = 0 & \text{ in }\B_s,\\[2mm]
					\tilde{z}_s^i = x^i & \text{ on }\dr \B_s.
				\end{cases}
			\end{align*}
			Hence the map $\tilde{z}_s^i-x^i$ satisfies $\tilde{z}_s^i-x^i=0$ on $\dr\B_s$ and the equation
			\begin{align*}
				\lap_{\geu} (\tilde{z}_s^i-x^i) & = \dr_{\alpha}\left( \left[ \delta^{\alpha\beta} -\sqrt{\det h}\, h^{\alpha\beta} \right]\dr_{\beta} (\tilde{z}_s^i-x^i) \right) + \dr_{\alpha}\left( \sqrt{\det h}\, h^{\alpha\beta}\, \dr_{\beta} (\tilde{z}_s^i-x^i) \right) \\[2mm]
				& = \dr_{\alpha}\left( \left[ \delta^{\alpha\beta} -\sqrt{\det h}\, h^{\alpha\beta} \right]\dr_{\beta} (\tilde{z}_s^i-x^i) \right) + \dr_{\alpha}\left( \sqrt{\det h}\, h^{\alpha i} \right).
			\end{align*}
			By elliptic regularity up to the boundary, together with \eqref{eq:controlh}, we have 
			\begin{align*}
				\|d(\tilde{z}_s^i - x^i)\|_{\bar{W}^{2,\left(\frac{n}{2},1\right)}(\B_s,\geu)} \leq C(n)\, \eps_2\, \|d(\tilde{z}_s^i - x^i)\|_{\bar{W}^{2,\left(\frac{n}{2},1\right)}(\B_s,\geu)} + C(n)\, \sum_{i,j,k} \|\dr_k h_{ij}\|_{\bar{W}^{1,\left(\frac{n}{2},1\right)}(\B_s,\geu)}.
			\end{align*}
			For $\eps_2>0$ small enough so that $C(n)\, \eps_2< \frac{1}{2}$ and using \eqref{eq:controlh}, we have
			\begin{align*}
				\|d(\tilde{z}_s^i - x^i)\|_{\bar{W}^{2,\left(\frac{n}{2},1\right)}(\B_s,\geu)} \leq C(n)\, \|\Riem^g\|_{L^{\left(\frac{n}{2},1\right)}(\R^n,g)} \leq C(n)\, \eps_2.
			\end{align*}
			Letting $s\to +\infty$, we obtain a map $\tilde{z}^i\colon \R^n\to \R$ such that 
			\begin{align*}
				\|d(\tilde{z}^i - x^i)\|_{\bar{W}^{2,\left(\frac{n}{2},1\right)}(\R^n,\geu)} \leq C(n)\, \|\Riem^g\|_{L^{\left(\frac{n}{2},1\right)}(\R^n,g)} \leq C(n)\, \eps_2.
			\end{align*}
			Following the proof of Claim \ref{cl:y_bilip} (where we replace the frame $(e_{s,1}, \ldots,e_{s,n})$ by the canonical basis of $\R^n$, already constant), we obtain that for $\eps_2>0$ small enough (depending only on $n$), the map $\tilde{z}\coloneq (\tilde{z}^1,\ldots,\tilde{z}^n)$ is a $C^1$ diffeomorphism of $\R^n$ satisfying $\lap_h \tilde{z}^i=0$ on $\R^n$. Moreover, we have the following estimates for the metric $\tilde{z}^*h$
			\begin{align}
				& \left\| (\tilde{z}^*h)_{ij} - \delta_{ij} \right\|_{\bar{W}^{2,\left(\frac{n}{2},1\right)}(\R^n,\geu)} \label{eq:est_harm_coord} \\[3mm]
				& = \left\| h_{\alpha\beta}\, (\dr_i \tilde{z}^{\alpha})\, (\dr_j \tilde{z}^{\beta}) - \delta_{ij} \right\|_{\bar{W}^{2,\left(\frac{n}{2},1\right)}(\R^n,\geu)} \nonumber\\[3mm] 
				&= \left\| (h_{\alpha\beta}-\delta_{\alpha\beta})\, (\dr_i \tilde{z}^{\alpha})\, (\dr_j \tilde{z}^{\beta}) - \delta_{\alpha\beta} \left[ (\dr_i \tilde{z}^{\alpha})\, (\dr_j \tilde{z}^{\beta}) - (\dr_i x^{\alpha})\, (\dr_j x^{\beta}) \right] \right\|_{\bar{W}^{2,\left(\frac{n}{2},1\right)}(\R^n,\geu)} \nonumber \\[3mm]
				& \leq C(n)\, \sum_{\alpha,\beta} \|h_{\alpha\beta} - \delta_{\alpha\beta} \|_{\bar{W}^{2,\left(\frac{n}{2},1\right)}(\R^n,\geu)} + C(n)\, \|d\tilde{z}-I_n\|_{\bar{W}^{2,\left(\frac{n}{2},1\right)}(\R^n,\geu)} \nonumber\\[3mm]
				& \leq C(n)\, \|\Riem^g\|_{L^{\left(\frac{n}{2},1\right)}(\R^n,g)}. \nonumber
			\end{align}
            The map $z\in\BiLip(\R^n)$ is given by $z\coloneq \tilde{z}\circ y$.
		\end{proof}
		
		\section{An extension problem}\label{sec:extension}
		
		In this section, we prove \Cref{th:local}. The boundary conditions in \eqref{eq:boundary} allows to extend the coefficients $g_{ij}-\delta_{ij}$ by some $\gamma_{ij}$ in a controlled manner. By gluing $\delta_{ij}+\gamma_{ij}$ to $\delta_{ij}$, we preserve a weak metric on $\R^n$ with some curvature in $L^{\left(\frac{n}{2},1\right)}(\R^n)$. We can then apply \Cref{th:Weak_metrics_harm}.
		
		\begin{proof}[Proof of \Cref{th:local}]
			We consider the trace operator:
			\begin{align*}
				T\colon \left| \begin{array}{c c c}
					W^{2+\frac{2}{n},\frac{n}{2}}(\B_2\setminus \B_1) & \to & W^{2,\frac{n}{2}}(\s^{n-1}_2\cup \s^{n-1}_1)\times W^{1,\frac{n}{2}}(\s^{n-1}_2\cup \s^{n-1}_1) \\[2mm]
					u & \mapsto & (u,\dr_r u)
				\end{array}
				 \right. 
			\end{align*}
			It has a bounded right-inverse (an extension operator, see for instance \cite{lamberti2020})
			\begin{align*}
				E\colon 
					W^{2,\frac{n}{2}}(\s^{n-1}_2\cup \s^{n-1}_1)\times W^{1,\frac{n}{2}}(\s^{n-1}_2\cup \s^{n-1}_1)  \to W^{2+\frac{2}{n},\frac{n}{2}}(\B_2\setminus \B_1) .
			\end{align*} 
			We define the maps $\kappa_{ij},\tau_{ij} \colon \s^{n-1}_2\cup \s^{n-1}_1\to \R$ as follows
			\begin{align*}
				\kappa_{ij} \coloneq \begin{cases}
					0 & \text{ on }\s^{n-1}_2,\\[1mm]
					g_{ij}-\delta_{ij} & \text{ on }\s^{n-1}_1.
				\end{cases}, \qquad \tau_{ij} \coloneq \begin{cases}
				0 & \text{ on }\s^{n-1}_2,\\[1mm]
				\dr_r g_{ij} & \text{ on }\s^{n-1}_1.
				\end{cases},
			\end{align*}
			We consider the extension
			\begin{align*}
				\gamma_{ij} \coloneq E(\kappa_{ij},\tau_{ij}).
			\end{align*}
			Thanks to \eqref{eq:boundary}, we have 
			\begin{align}\label{eq:Sob_ext}
            \begin{aligned} 
				\|\gamma_{ij} \|_{W^{2+\frac{2}{n},\frac{n}{2}}(\B_2\setminus \B_1)} & \leq C(n)\left( \|g_{ij}-\delta_{ij}\|_{W^{2,\frac{n}{2}}(\s^{n-1})} +  \|\dr_r g_{ij} \|_{W^{1,\frac{n}{2}}(\s^{n-1})} \right)\\[2mm]
				& \leq C(n)\, \eps. 
            \end{aligned}
			\end{align}
			Since $W^{2+\frac{2}{n},\frac{n}{2} }(\B_2\setminus \B_1)\hookrightarrow L^{\infty}(\B_2\setminus \B_1)$, we obtain 
			\begin{align}\label{eq:Linfty_ext}
				\|\gamma_{ij} \|_{L^{\infty}(\B_2\setminus \B_1)} \leq C(n)\, \eps.
			\end{align}
			Let $\chi\in C^{\infty}(\B_2\setminus \B_1;[0,1])$ be a cut-off function such that $\chi=1$ in $\s^{n-1}_1$ and $\chi=0$ on $\s^{n-1}_2$. We consider the weak metric
			\begin{align*}
				\tilde{g}_{ij} \coloneq \begin{cases}
					\delta_{ij} & \text{ in }\R^n\setminus \B_2,\\[1mm]
					\delta_{ij} + \chi\, \gamma_{ij} & \text{ in } \B_2\setminus \B_1,\\[1mm]
					g_{ij} & \text{ in } \B_1.
				\end{cases}
			\end{align*}
			If $\eps>0$ is small enough, we obtain from \eqref{eq:Sob_ext} and \eqref{eq:Linfty_ext} that $\tilde{g}_{ij}$ defines a weak metric on $\R^n$. Using \eqref{eq:Riem_coordinates}, we have
			\begin{align}
				& \left\| \Riem^{\tilde{g}} \right\|_{L^{\left(\frac{n}{2},1\right)}(\R^n,\tilde{g})}  \nonumber \\[3mm]
				& \leq C(n)\left(\left\| \Riem^{\tilde{g}} \right\|_{L^{\left(\frac{n}{2},1\right)}(\R^n\setminus \B_2,\tilde{g})} + \left\| \Riem^{\tilde{g}} \right\|_{L^{\left(\frac{n}{2},1\right)}(\B_2\setminus \B_1,\tilde{g})} + \left\| \Riem^{\tilde{g}} \right\|_{L^{\left(\frac{n}{2},1\right)}(\B_1,\tilde{g})}\right) \nonumber  \\[3mm]
				& \leq C(n)\left( \sum_{i,j} \|\g^2 \tilde{g}_{ij}\|_{L^{\left(\frac{n}{2},1\right)}(\B_2\setminus \B_1,\geu) } + \sum_{i,j} \|\g \tilde{g}_{ij}\|_{L^{(n,2)}(\B_2\setminus \B_1,\geu) }^2 + \left\| \Riem^g \right\|_{L^{\left(\frac{n}{2},1\right)}(\B_1,g)}  \right) \nonumber  \\[3mm]
				& \leq C(n)\left( \sum_{i,j} \|g_{ij}-\delta_{ij}\|_{W^{2,\frac{n}{2}}(\s^{n-1})} + \sum_{i,j}  \|\dr_r g_{ij} \|_{W^{1,\frac{n}{2}}(\s^{n-1})} +  \left\| \Riem^g \right\|_{L^{\left(\frac{n}{2},1\right)}(\B_1,g)} \right) \label{eq:est_Riem_ext} \\[3mm]
				& \leq C(n)\, \eps. \nonumber 
			\end{align}
			If $\eps>0$ is small enough, we obtain $C(n)\, \eps<\eps_0$ and we can apply \Cref{th:Weak_metrics_harm}. There exists $f\in \BiLip(\R^n)$ providing harmonic coordinates for $\tilde{g}$ and such that 
			\begin{align*}
				\left\| (f^*\tilde{g})_{ij} - \delta_{ij} \right\|_{\bar{W}^{2,\left(\frac{n}{2},1\right)}(\R^n,\geu)} \leq C_0 \, \left\| \Riem^{\tilde{g}} \right\|_{L^{\left(\frac{n}{2},1\right)}(\R^n,\tilde{g})}.
			\end{align*}
			Restricting the left-hand side to $f^{-1}(\B_1)$, and using \eqref{eq:est_Riem_ext}, we obtain 
			\begin{align*}
				& \sum_{i,j} \left\| (f^*g)_{ij} - \delta_{ij} \right\|_{\bar{W}^{2,\left(\frac{n}{2},1\right)}(f^{-1}(\B),\geu)} \\[3mm]
				& \leq  C(n)\left( \sum_{i,j} \|g_{ij}-\delta_{ij}\|_{W^{2,\frac{n}{2}}(\s^{n-1})} + \sum_{i,j}  \|\dr_r g_{ij} \|_{W^{1,\frac{n}{2}}(\s^{n-1})} +  \left\| \Riem^g \right\|_{L^{\left(\frac{n}{2},1\right)}(\B_1,g)} \right).
			\end{align*}
		\end{proof}
		
		\section{Application to immersions of $\R^n$}\label{sec:Imm}
		
		In this section, we consider immersions $\vPhi\colon \Sigma\to \R^d$, where $n\geq 4$ is even and $\Sigma$ will be an $n$-dimensional complete manifold possibly with boundary. In \Cref{sec:Sobolev}, we will study the Sobolev constants on the manifold $(\Sigma,g_{\vPhi})$, where $g_{\vPhi}$ is the induced metric of $\vPhi$, and $\Sigma$ has non-empty boundary. In \Cref{sec:Riem_imm}, we will study the case of immersions such that $\Er_n(\vPhi)<+\infty$. We prove that in this case, it holds $\Riem^{g_{\vPhi}}\in L^{\left(\frac{n}{2},1\right)}(\Sigma,g_{\vPhi})$. This implies \Cref{th:Imm} for $\Sigma=\R^n$. In \Cref{sec:Local_Imm}, we consider the case $\Sigma=\B^n$ with $\vPhi(\dr\B^n)$ parametrizing a graph and study the extension of such immersions. In order to study the regularity of immersions having the least possible regularity to define $\Er_n(\vPhi)$, we consider the notion of weak immersions, first developed by the second author, see for instance \cite{riviere2016,Lan2025}. We denote $\Imm(\Sigma;\R^d)$ the space of $C^{\infty}$ immersions of $\Sigma$ into $\R^d$. We now define the space of weak immersions.
	
	\begin{defi}
		Let $(\Sigma,h)$ be a closed orientable $n$-dimensional Riemannian manifold and $d>n$ be an integer. Given $k\in\N$ and $p\in[1,+\infty]$, we define the notion of weak immersion $\I_{k,p}(\Sigma;\R^d)$ as follows:
		\begin{align*}
			\I_{k,p}(\Sigma;\R^d)\coloneqq \left\{
			\vPhi\in W^{k+2,p}(\Sigma;\R^d) : \exists c_{\vPhi}>0,\ c^{-1}_{\vPhi} h \leq g_{\vPhi} \leq c_{\vPhi}\, h
			\right\}.
		\end{align*}    
        We also have the Sobolev--Lorentz version, for $q\in[1,+\infty]$
        \begin{align*}
			\I_{k,(p,q)}(\Sigma;\R^d)\coloneqq \left\{
			\vPhi\in W^{k+2,(p,q)}(\Sigma;\R^d) :  \exists c_{\vPhi}>0,\ c_{\vPhi}^{-1} h \leq g_{\vPhi} \leq c_{\vPhi}\, h
			\right\}.
		\end{align*}    
	\end{defi}
    As explained in the introduction, a weak immersion $\vPhi\in \I_{k,(p,q)}(\Sigma;\R^d)$ has a well-defined notion of second fundamental form $\vII_{\vPhi}$ whose coefficient in some coordinates where $\vPhi\in W^{k+2,(p,q)}(\B^n)$ lie in the Lorentz--Sobolev space $W^{k,(p,q)}(\Sigma)$.
    
		\subsection{Sobolev injections for immersed submanifolds}\label{sec:Sobolev}
		
		In this section, we prove that the constant in the Sobolev injections $W^{1,(p,q)}(\Sigma,g_{\vPhi})\hookrightarrow L^{\left( \frac{np}{n-p} , q\right) }(\Sigma,g_{\vPhi})$ can be taken independent of the immersion $\vPhi$ if the $L^{(n,\infty)}$-norm of $\vH_{\vPhi}$ is small enough. To do so, we rely on the Sobolev inequality for submanifolds of $\R^n$ proved by Brendle \cite{brendle2021}. The goal of this section is to prove the following result.
		\begin{theorem}\label{th:Sobolev}
            Let $d>n\geq 3$ be integers. 
			There exists $\eps_*=\eps(n,d)>0$ such that the following holds. Let $\vPhi\colon \Sigma\to \R^d$ be a $C^{\infty}$ immersion from a compact smooth orientable manifold $\Sigma^n$ with $\dr\Sigma\neq \emptyset$ and $g_{\vPhi} \coloneqq \vPhi^*\geu $. Assume that $\|\vH_{\vPhi}\|_{L^{(n,\infty)}(\Sigma,g_{\vPhi})}\leq \eps_*$.\\
			Let $p\in[1,n)$ and $q\in[1,+\infty]$. There exists a constant $c>0$ depending only on $n,d,p,q$ such that 
			\begin{align*}
				\forall \vp\in C^{\infty}_c(\Sigma),\qquad \|\vp\|_{L^{\left(\frac{np}{n-p},q\right)}\left(\Sigma,g_{\vPhi} \right)} \leq c\, \|d\vp\|_{L^{(p,q)}\left( \Sigma,g_{\vPhi} \right)}.
			\end{align*}
		\end{theorem}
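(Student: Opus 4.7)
The plan is to derive the inequality by combining Brendle's sharp isoperimetric-type Sobolev inequality for submanifolds of $\R^d$ with a coarea argument, producing first a Lorentz-refined Michael--Simon--Sobolev inequality at $p=1$, and then using Lorentz--H\"older, an iteration on powers $|\vp|^r$, and a final real interpolation to reach the full claimed range of $(p,q)$. The use of Lorentz spaces is not cosmetic: the smallness assumption on $\|\vH_\vPhi\|_{L^{(n,\infty)}}$ is strictly weaker than smallness in $L^n$, so the absorption of the curvature term must exploit precisely the duality between $L^{(n/(n-1),1)}$ and $L^{(n,\infty)}$.

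The first step is to establish, for every $\vp\in C^\infty_c(\Sigma)$, the Lorentz-refined Brendle inequality
\[
\|\vp\|_{L^{(n/(n-1),1)}(\Sigma,g_\vPhi)} \leq C(n,d)\,\bigl(\|d\vp\|_{L^1(\Sigma,g_\vPhi)} + \|\vp\,\vH_\vPhi\|_{L^1(\Sigma,g_\vPhi)}\bigr).
\]
For this I would apply Brendle's isoperimetric inequality $|E|_{g_\vPhi}^{(n-1)/n} \leq C(n,d)\,\bigl(\mathcal{H}^{n-1}(\partial E) + \int_E |\vH_\vPhi|\, d\vol_{g_\vPhi}\bigr)$ to each superlevel set $E_t=\{|\vp|>t\}$ (which for a.e.\ $t$ has smooth boundary contained in the interior of $\Sigma$ by Sard's theorem), combine it with the identity $\|\vp\|_{L^{(n/(n-1),1)}}=\tfrac{n}{n-1}\int_0^\infty|E_t|^{(n-1)/n}\,dt$, and close with the coarea formula $\int_0^\infty \mathcal{H}^{n-1}(\partial E_t)\,dt = \|d\vp\|_{L^1}$ and Fubini $\int_0^\infty\int_{E_t}|\vH_\vPhi|\,dt=\|\vp\,\vH_\vPhi\|_{L^1}$.

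Lorentz--H\"older then yields $\|\vp\,\vH_\vPhi\|_{L^1}\leq C(n)\,\|\vH_\vPhi\|_{L^{(n,\infty)}}\,\|\vp\|_{L^{(n/(n-1),1)}}$, so choosing $\eps_*$ small depending only on $n,d$ lets us absorb this term and obtain the clean Lorentz--Sobolev inequality $\|\vp\|_{L^{(n/(n-1),1)}} \leq C(n,d)\,\|d\vp\|_{L^1}$. In particular $\|\vp\|_{L^{n/(n-1)}}\leq C\,\|d\vp\|_{L^1}$, and applying this last inequality to $|\vp|^r$ with $r=p(n-1)/(n-p)$, together with classical H\"older on $\|\vp^{r-1}\,d\vp\|_{L^1}$ and cancellation of $\|\vp\|_{L^{np/(n-p)}}^{r-1}$, produces the scale-invariant Sobolev inequality $\|\vp\|_{L^{np/(n-p)}}\leq C(n,d,p)\,\|d\vp\|_{L^p}$ for every $p\in(1,n)$.

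Finally, to reach arbitrary $q\in[1,+\infty]$ I would fix $p_0,p_1$ with $1<p_0<p<p_1<n$, invoke the Sobolev inequalities just established at $p_0$ and $p_1$ as bounded inclusions $\iota\colon W^{1,p_i}(\Sigma,g_\vPhi)\hookrightarrow L^{np_i/(n-p_i)}(\Sigma,g_\vPhi)$, and apply the real $K$-method: since $\bigl(L^{a_0}(\Sigma,g_\vPhi),L^{a_1}(\Sigma,g_\vPhi)\bigr)_{\theta,q}=L^{(a,q)}(\Sigma,g_\vPhi)$ on the scaling line $1/a=(1-\theta)/a_0+\theta/a_1$, the interpolated inclusion reads $\|\vp\|_{L^{(np/(n-p),q)}}\leq c(n,d,p,q)\,\|d\vp\|_{L^{(p,q)}}$, which is the claim. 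The main obstacle is the first step: producing a Lorentz-type Brendle inequality, where applying the isoperimetric inequality to every level set and integrating (rather than once) is exactly what puts the dual space $L^{(n/(n-1),1)}$ of $L^{(n,\infty)}$ on the left, enabling the subsequent absorption; the rest of the argument is a bootstrap of standard type.
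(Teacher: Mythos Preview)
Your proposal is correct and follows essentially the same route as the paper: Brendle's isoperimetric inequality applied to superlevel sets plus coarea gives the $L^{(n/(n-1),1)}$ bound, the $L^{(n/(n-1),1)}$--$L^{(n,\infty)}$ duality absorbs the mean curvature term under the smallness assumption, the power substitution $|\vp|^r$ with $r=p(n-1)/(n-p)$ yields the plain $W^{1,p}\hookrightarrow L^{np/(n-p)}$ inequalities, and real interpolation between two such endpoints produces the Lorentz version. The paper's proof differs only cosmetically (it first quotes Brendle's inequality in Sobolev form and then specializes to $\vp=1$ to extract the isoperimetric inequality, rather than quoting the isoperimetric form directly).
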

		
		In order to prove \Cref{th:Sobolev}, we start by proving the case $p=1$ and $q=\frac{n}{n-1}$ as a consequence of \eqref{eq:isop}. By a good choice of test function, we will recover the standard Sobolev injections $W^{1,p}\hookrightarrow L^{p^*}$. \Cref{th:Sobolev} is then a consequence of an interpolation inequality.

		\begin{proof}
             We will denote $\B^m$ the unit ball of $\R^m$ and $\Hr^m$ the Lebesgue measure. We denote 
		\begin{align*}
			L\coloneqq n\left(\frac{d}{d-n}\, \frac{\Hr^d(\B^d)}{\Hr^{d-n}(\B^{d-n})}\right)^{\frac{1}{n}}.
		\end{align*}
		We start from the Sobolev inequality obtained in \cite[Theorem 1]{brendle2021}\footnote{The result is stated for submanifolds, but the proof is actually the same for immersed manifolds of class $C^{\infty}$.}: for any $\vp\in C^{\infty}(\Sigma;[0,+\infty))$, 
		\begin{align}\label{eq:isop}
			L\, \left(\int_{\Sigma} \vp^{\frac{n}{n-1}}\, d\vol_{g_{\vPhi}} \right)^{\frac{n-1}{n}} \leq \int_{\dr\Sigma} \vp\, d\vol_{g_{\vPhi}} + \int_{\Sigma} \sqrt{ |d\vp|^2_{g_{\vPhi}} + n^2\, \vp^2\, |H_{\vPhi}|^2 }\ d\vol_{g_{\vPhi}}.
		\end{align}
        
			Let $U\subset \Sigma$ be a smooth open set. Then $\vPhi\colon U\to \R^d$ is a smooth immersion and we have, for any $\vp\in C^{\infty}(U)$ with $\vp\geq 0$, that

			\begin{align*}
				L\, \left(\int_{U} \vp^{\frac{n}{n-1}}\, d\vol_{g_{\vPhi}} \right)^{\frac{n-1}{n}} \leq \int_{\dr U} \vp\, d\vol_{g_{\vPhi}} + \int_{U} \sqrt{ |d\vp|^2_{g_{\vPhi}} + n^2\, \vp^2\, |H_{\vPhi}|^2 }\ d\vol_{g_{\vPhi}}.
			\end{align*}

			Taking $\vp=1$, we obtain the following isoperimetric inequality
			\begin{align}\label{eq:Isop}
				L\, \vol_{g_{\vPhi}}(U)^{\frac{n-1}{n}} \leq \vol_{g_{\vPhi}}(\dr U) + \int_U |H_{\vPhi}|\, d\vol_{g_{\vPhi}}.
			\end{align}
			Let $\vp\in C^{\infty}_c(\Sigma)$. We have 
			\begin{align*}
				\|\vp\|_{L^{\left( \frac{n}{n-1} ,1\right) }\left(\Sigma,g_{\vPhi} \right)} = \frac{n-1}{n}\int_0^{+\infty} \vol_{g_{\vPhi}}\left(\left\{ \vp > t\right\}\right)^{\frac{n-1}{n}}\ dt.
			\end{align*}
			Since $\vp$ is smooth, the set of regular values has full measure thanks to Sard's theorem. Since $\vp$ has compact support, for such values $t$, it holds $\{\vp >t\}\Subset \Sigma$, so that $\dr\{\vp >t\} =  \{\vp=t\}$. We apply the isoperimetric inequality \eqref{eq:Isop}:
			\begin{align*}
				\|\vp\|_{L^{\left(\frac{n}{n-1},1\right)}\left(\Sigma,g_{\vPhi} \right)} \leq \frac{n}{L\, (n-1)} \int_0^{+\infty} \left( \vol_{g_{\vPhi}}\big(\{ \vp = t \}\big) + \int_{\{\vp>t\}} |\vH_{\vPhi}|\,  d\vol_{g_{\vPhi}}\right)\, dt.
			\end{align*}
			Thanks to the coarea formula, we obtain
			\begin{align*}
				\int_0^{+\infty} \left( \vol_{g_{\vPhi}}\big(\{ \vp = t \}\big) + \int_{\{\vp>t\}} |\vH_{\vPhi}|\,  d\vol_{g_{\vPhi}}\right)\, dt = \int_{\Sigma} |d\vp|_{g_{\vPhi}}\, d\vol_{g_{\vPhi}} + \int_{\Sigma} |\vp|\, |\vH_{\vPhi}|\, d\vol_{g_{\vPhi}}.
			\end{align*}
			By duality $L^{\left(\frac{n}{n-1},1\right)}-L^{(n,\infty)}$, we obtain
			\begin{align*}
				\|\vp\|_{L^{\left(\frac{n}{n-1},1\right)}(\Sigma,g_{\vPhi})} \leq \frac{n}{L\, (n-1)}\left(\int_{\Sigma} |d\vp|_{g_{\Phi}}\, d\vol_{g_{\vPhi}} + \|\vp\|_{L^{\left(\frac{n}{n-1},1\right)}(\Sigma,g_{\vPhi})} \|\vH_{\vPhi}\|_{L^{(n,\infty)}(\Sigma,g_{\vPhi})} \right).
			\end{align*}
			If $\|\vH_{\vPhi}\|_{L^{(n,\infty)}(\Sigma,g_{\vPhi})} < L\frac{n-1}{2\, n}$, then we obtain 
			\begin{align*}
				\forall \vp\in C^{\infty}_c(\Sigma),\qquad \|\vp\|_{L^{\left(\frac{n}{n-1},1\right)}(\Sigma,g_{\vPhi})} \leq \frac{2n}{L\, (n-1)} \|d\vp\|_{L^1(\Sigma,g_{\vPhi})}.
			\end{align*}
			let $p>1$ and consider now the choice $\vp = \psi^s$ for $s=\frac{p^*}{1^*} = \frac{np}{n-p}\frac{n-1}{n}= p \frac{n-1}{n-p}>p$. We obtain
			\begin{align*}
				\|\psi \|_{L^{\left(\frac{ ns }{n-1}, s\right)}\left(\Sigma,g_{\vPhi} \right)}^s \leq \frac{2n\, L\, s}{n-1} \int_{\Sigma} \psi^{s-1} |d\psi|_{g_{\vPhi}}\, d\vol_{g_{\vPhi}}.
			\end{align*}
			We compute the exponents:
			\begin{align*}
				\frac{ns}{n-1} = \frac{np}{n-p}, & & s-1 = \frac{n(p-1)}{n-p}.
			\end{align*}
			Hence, we obtain
			\begin{align*}
				\|\psi \|_{L^{\left(\frac{ np }{n-p}, p\frac{n-1}{n-p}\right)}\left(\Sigma,g_{\vPhi} \right)}^s & \leq \frac{2n\, L\, s}{n-1}\, \|d\psi\|_{L^p\left(\Sigma,g_{\vPhi} \right)}\, \|\psi\|_{L^{p\frac{s-1}{p-1}}\left(\Sigma,g_{\vPhi} \right)}^{s-1}\\[2mm]
				& \leq \frac{2n\, L\, s}{n-1}\, \|d\psi\|_{L^p\left(\Sigma,g_{\vPhi} \right)}\, \|\psi\|_{L^{\frac{np}{n-p}}\left(\Sigma,g_{\vPhi} \right)}^{s-1}.
			\end{align*}
			We obtain the Sobolev injection $W^{1,p}\hookrightarrow L^{\left(\frac{np}{n-p},\frac{p(n-1)}{n-p}\right)}$, but the Lorentz exponent is too large $p\frac{n-1}{n-p}>p$. Hence, we will only record the standard Sobolev inequality
			\begin{align*}
				\forall \psi\in C_c^{\infty}(\Sigma),\qquad \|\psi \|_{L^{\frac{ np }{n-p}}\left(\Sigma,g_{\vPhi} \right)} \leq C(n,d,p)\, \|d\psi\|_{L^p\left(\Sigma,g_{\vPhi} \right)}.
			\end{align*}
			We now recover all the Sobolev injections by interpolation. Indeed, the identity map $I\colon W^{1,p}(\Sigma,g_{\vPhi})\to L^{\frac{np}{n-p}}(\Sigma,g_{\vPhi})$ is a bounded linear operator. By \cite[Theorem 1.12]{bennett1988}, we obtain that for any $1\leq p_1<p_2<n$, $\theta\in(0,1)$ and $q\in[1,+\infty)$, the map $I\colon (W^{1,p_1},W^{1,p_2})_{\theta,q}\to (L^{p_1^*},L^{p_2^*})_{\theta,q}$ is a bounded operator. Let $p\in(p_1,p_2)$ and $\theta\in(0,1)$ such that
			\begin{align*}
				\frac{1}{p} = \frac{1-\theta}{p_1} + \frac{\theta}{p_2}.
			\end{align*}	
			Then we have 
			\begin{align*}
				\frac{1}{p^*} = \frac{1-\theta}{p_1^*} + \frac{\theta}{p_2^*}.
			\end{align*}
			Thus, we obtain that $I\colon W^{1,(p,q)}(\Sigma,g_{\vPhi})\to L^{(p^*,q)}(\Sigma,g_{\vPhi})$ is a bounded operator, with constant depending only on $p_1,p_2,\theta,q,n,d$.
		\end{proof}

		\subsection{Regularity of the Riemann tensor for immersions}\label{sec:Riem_imm}
		
		In this section, we consider an immersion $\vPhi\colon \Sigma^n\to \R^d$ with $n\geq 4$ even. We establish the regularity of the induced metric $g_{\vPhi}$ and its Riemann tensor under the assumption $\Er_n(\vPhi)<+\infty$. To define $\Er_n(\vPhi)$, we need \textit{a priori} that $\vII_{\vPhi}\in W^{\frac{n}{2}-1,2}(\Sigma,g_{\vPhi})$, that is to say, the generalized Gauss map $\vn_{\vPhi}$ lies in $W^{\frac{n}{2},2}(\Sigma,g_{\vPhi})$. This requires at least that $\vPhi$ lies in $W^{\frac{n}{2}+1,2}(\Sigma)$.
		
		\begin{lemma}
			Let $n\geq 4$ be an integer and $\vPhi\in \I_{\frac{n}{2}-1,2}(\B^n;\R^d)$ be a weak immersion. We then have
			\begin{align*}
				\big( g_{\vPhi} \big)_{ij} \in W^{\frac{n}{2},2}(\B^n)\hookrightarrow W^{2,\left(\frac{n}{2},2\right)}(\B^n) \hookrightarrow W^{1,(n,2)}(\B^n).
			\end{align*}
		\end{lemma}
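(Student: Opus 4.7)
The hypothesis $\vPhi \in \I_{\frac{n}{2}-1,2}(\B^n;\R^d)$ packages two facts I will use throughout: first, $\vPhi \in W^{\frac{n}{2}+1,2}(\B^n;\R^d)$, so that $\partial_i \vPhi \in W^{\frac{n}{2},2}(\B^n;\R^d)$ for each $i$; second, the upper bound $g_{\vPhi} \le c_{\vPhi}\, h$ from the definition of $\I_{\frac{n}{2}-1,2}$ forces $\vPhi$ to be Lipschitz, so $\partial_i\vPhi \in L^{\infty}(\B^n;\R^d)$. The coefficients of the induced metric, $(g_{\vPhi})_{ij} = \partial_i\vPhi \cdot \partial_j\vPhi$, are therefore products of elements of $W^{\frac{n}{2},2}(\B^n) \cap L^{\infty}(\B^n)$.

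The plan is to first establish the membership $(g_{\vPhi})_{ij} \in W^{\frac{n}{2},2}(\B^n)$ via the algebra property of $W^{s,2} \cap L^{\infty}$ (Moser estimate), and then derive the two embeddings purely at the level of scalar Sobolev spaces. For the first step I would apply the Leibniz rule: for any multi-index $\alpha$ with $|\alpha|\le \frac{n}{2}$,
\begin{align*}
\partial^{\alpha}\bigl( \partial_i\vPhi \cdot \partial_j\vPhi \bigr) = \sum_{\beta+\gamma=\alpha} \binom{\alpha}{\beta}\ \partial^{\beta}\partial_i\vPhi \cdot \partial^{\gamma}\partial_j\vPhi .
\end{align*}
For every pair $(\beta,\gamma)$, Gagliardo--Nirenberg interpolation between $L^{\infty}$ and $W^{\frac{n}{2},2}$ gives $\|\partial^{\beta}\partial_i\vPhi\|_{L^{p_\beta}} \le C\,\|\partial_i\vPhi\|_{L^{\infty}}^{1-\theta_\beta}\,\|\partial_i\vPhi\|_{W^{\frac{n}{2},2}}^{\theta_\beta}$ with exponents chosen so that $1/p_{\beta}+1/p_{\gamma}=1/2$, and Hölder's inequality then bounds each summand in $L^2$. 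The resulting estimate $\|(g_{\vPhi})_{ij}\|_{W^{\frac{n}{2},2}} \le C\,\|\partial_i\vPhi\|_{L^{\infty}}\|\partial_j\vPhi\|_{W^{\frac{n}{2},2}} + C\,\|\partial_j\vPhi\|_{L^{\infty}}\|\partial_i\vPhi\|_{W^{\frac{n}{2},2}}$ is finite by the discussion above.

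For the two chained embeddings I would use only the standard refinement $W^{k,2}(\B^n) \hookrightarrow L^{(\frac{2n}{n-2k},2)}(\B^n)$, valid for $0 < k < \frac{n}{2}$. Applied with $k = \frac{n}{2}-2$ to the second derivatives of $(g_{\vPhi})_{ij}$, this gives $\nabla^{2}(g_{\vPhi})_{ij} \in L^{(\frac{n}{2},2)}(\B^n)$; applied with $k=\frac{n}{2}-1$ to the first derivatives it gives $\nabla (g_{\vPhi})_{ij} \in L^{(n,2)}(\B^n) \hookrightarrow L^{(\frac{n}{2},2)}(\B^n)$ on the bounded domain $\B^n$; and the function itself lies in $L^{\infty}(\B^n) \hookrightarrow L^{(\frac{n}{2},2)}(\B^n)$. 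Combining these three inclusions yields $W^{\frac{n}{2},2}(\B^n) \hookrightarrow W^{2,(\frac{n}{2},2)}(\B^n)$. The final embedding $W^{2,(\frac{n}{2},2)}(\B^n) \hookrightarrow W^{1,(n,2)}(\B^n)$ follows by applying the Sobolev--Lorentz step $W^{1,(\frac{n}{2},2)}(\B^n) \hookrightarrow L^{(n,2)}(\B^n)$ both to the function and to its gradient (both of which lie in $W^{1,(\frac{n}{2},2)}$ by the previous embedding).

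The only genuinely delicate point is the Moser estimate in the first step: $W^{\frac{n}{2},2}(\B^n)$ is a critical Sobolev space in dimension $n$ and does \emph{not} embed in $L^{\infty}$, so without the a priori $L^{\infty}$ bound on $\nabla \vPhi$ coming from the bi-Lipschitz condition of $\I_{\frac{n}{2}-1,2}$, the product estimate would fail. Everything else is a direct bookkeeping exercise with Sobolev--Lorentz embeddings.
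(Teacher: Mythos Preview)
Your proof is correct and follows essentially the same route as the paper: both argue that $\partial_i\vPhi\in W^{\frac{n}{2},2}\cap L^\infty$ (the $L^\infty$ bound coming from the bi-Lipschitz condition in the definition of $\I_{\frac{n}{2}-1,2}$), invoke the algebra property of $W^{\frac{n}{2},2}\cap L^\infty$ for the product $(g_{\vPhi})_{ij}=\partial_i\vPhi\cdot\partial_j\vPhi$, and then read off the embeddings from standard Sobolev--Lorentz inequalities. The paper merely asserts the product estimate and records the Sobolev exponent in one line, whereas you spell out the Moser--Gagliardo--Nirenberg argument and check the embeddings derivative by derivative; this is additional detail rather than a different strategy.
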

		\begin{proof}
			We have $d\vPhi\in W^{\frac{n}{2},2}(\B^n)\cap L^{\infty}(\B^n)$. Thus, we have 
			\begin{align*}
				\left(g_{\vPhi} \right)_{\alpha\beta} = \dr_{\alpha} \vPhi \cdot \dr_{\beta} \vPhi \in W^{\frac{n}{2},2}(\B^n).
			\end{align*}
			If $n=4$, we have $W^{\frac{n}{2},2}=W^{2,\left( \frac{n}{2},2\right)}$. If $n>4$, we use the Sobolev embedding $W^{\frac{n}{2},2}(\B^n)\hookrightarrow W^{2,(p,2)}(\B^n)$, where $p>1$ is given by the following formula:
			\begin{align*}
				\frac{1}{2} - \frac{n}{2n} = \frac{1}{p} - \frac{2}{n}.
			\end{align*}
			That is to say, $p=\frac{n}{2}$.
		\end{proof}
		
		Therefore, the metric $g_{\vPhi}$ of a given weak immersion $\vPhi\in \I_{\frac{n}{2}-1,2}(\Sigma^n;\R^d)$ might not be continuous for an arbitrary choice of coordinates. However, by definition of $\I_{\frac{n}{2}+1,2}(\Sigma^n;\R^d)$, the matrix $\big( g_{\vPhi} \big)_{ij}$ is invertible. Hence, we can define the Christoffel symbols and the curvature tensors by the usual formulas. One can also define the Riemann tensor of $g_{\vPhi}$ thanks to the Gauss--Codazzi equation:
		\begin{align}\label{eq:Gauss_Codazzi}
			\Riem^{g_{\vPhi}}_{ijkl} = \vII_{ik}\cdot \vII_{jl} - \vII_{il}\cdot \vII_{jk} .
		\end{align}
		
		\begin{lemma}\label{lm:Int_II}
			For weak immersion $\vPhi\in \I_{\frac{n}{2}-1,2}(\Sigma^n;\R^d)$, we have $\Er_n(\vPhi)<+\infty$ and $\vII_{\vPhi}\in L^{(n,2)}\left(\Sigma,g_{\vPhi}\right)$.
		\end{lemma}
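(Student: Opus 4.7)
The plan is to derive both conclusions from a single Sobolev--Lorentz embedding applied to the tensor $\vII_{\vPhi}$ viewed in charts of the smooth background metric $h$.

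First, I would observe that since $\vPhi\in\I_{\frac{n}{2}-1,2}(\Sigma;\R^d)$, the induced metric $g_{\vPhi}$ is bi-Lipschitz equivalent to $h$, so that Sobolev--Lorentz norms computed with respect to $g_{\vPhi}$ and with respect to $h$ are equivalent (with constants depending on $c_{\vPhi}$ and on the smooth metric $h$). Combined with the discussion following the definition of $\I_{k,p}$, this gives $\vII_{\vPhi}\in W^{\frac{n}{2}-1,2}(\Sigma, g_{\vPhi})$.

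Next, for each $i\in\{0,1,\ldots,\frac{n}{2}-1\}$, the tensor $\g^i\vII_{\vPhi}$ has $\frac{n}{2}-1-i$ further covariant derivatives in $L^2$, i.e. $\g^i\vII_{\vPhi}\in W^{\frac{n}{2}-1-i,2}(\Sigma,g_{\vPhi})$. The Lorentz-refined Sobolev embedding in dimension $n$ gives (in charts, using the equivalence with $h$)
\begin{align*}
W^{\frac{n}{2}-1-i,2}(\Sigma,g_{\vPhi})\hookrightarrow L^{\left(\frac{n}{i+1},2\right)}(\Sigma,g_{\vPhi}),
\end{align*}
since the Sobolev exponent satisfies $\frac{1}{p}=\frac{1}{2}-\frac{n/2-1-i}{n}=\frac{i+1}{n}$ and the Lorentz exponent $2$ is preserved along the chain of one-derivative Sobolev embeddings. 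Specialising to $i=0$ yields $\vII_{\vPhi}\in L^{(n,2)}(\Sigma,g_{\vPhi})$, which is the second assertion.

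Finally, since $L^{\left(\frac{n}{i+1},2\right)}\hookrightarrow L^{\frac{n}{i+1}}$ (for $\frac{n}{i+1}>2$, i.e.\ $i<\frac{n}{2}-1$), and in the top case $i=\frac{n}{2}-1$ the exponent is already $2$, we obtain
\begin{align*}
\int_{\Sigma}|\g^i\vII_{\vPhi}|_{g_{\vPhi}}^{\frac{n}{i+1}}\,d\vol_{g_{\vPhi}}\leq C\,\|\g^i\vII_{\vPhi}\|_{L^{\left(\frac{n}{i+1},2\right)}(\Sigma,g_{\vPhi})}^{\frac{n}{i+1}}<+\infty
\end{align*}
for each $i$. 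Summing over $i\in\{0,\ldots,\frac{n}{2}-1\}$ gives $\Er_n(\vPhi)<+\infty$. The only subtle point, which I would handle by a standard partition-of-unity argument, is to ensure that the Sobolev embedding is applied with a uniform constant on the closed manifold $(\Sigma,h)$; no difficulty arises because $h$ is a fixed smooth background metric independent of the immersion.
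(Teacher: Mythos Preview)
Your proposal is correct and follows essentially the same approach as the paper: both arguments use the chain of Sobolev--Lorentz embeddings $W^{\frac{n}{2}-1,2}\hookrightarrow W^{j,\left(\frac{n}{j+1},2\right)}$ (equivalently, $\g^i\vII_{\vPhi}\in W^{\frac{n}{2}-1-i,2}\hookrightarrow L^{\left(\frac{n}{i+1},2\right)}$) with the Lorentz exponent $2$ preserved at each step, and then read off $\vII_{\vPhi}\in L^{(n,2)}$ at the bottom. Your write-up is slightly more explicit than the paper in spelling out the final inclusion $L^{\left(\frac{n}{i+1},2\right)}\hookrightarrow L^{\frac{n}{i+1}}$ needed to conclude $\Er_n(\vPhi)<+\infty$, and in flagging the bi-Lipschitz equivalence of $g_{\vPhi}$ and $h$, but the substance is identical.
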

		
		\begin{proof}
			By Sobolev embeddings, we have that $W^{\frac{n}{2}-1,2}(\Sigma^n)\hookrightarrow W^{k,p}(\Sigma^n)$ for every $\frac{n}{2}-1>k\geq 0$ and $p>2$ such that 
			\begin{align*}
				\frac{1}{n}=\frac{1}{2} - \frac{\frac{n}{2}-1}{n} = \frac{1}{p} - \frac{k}{n}.
			\end{align*}
			We consider the sequence $k_i = \left(\frac{n}{2}-1\right)-i$ for $0\leq i\leq \frac{n}{2}-1$. The associated exponent $p_i$ is given by
			\begin{align*}
				\frac{1}{p_i} = \frac{1}{n}+\frac{k_i}{n} = \frac{\frac{n}{2}-i}{n}.
			\end{align*}
			Given $\vPhi\in \I_{\frac{n}{2}+1,2}(\Sigma;\R^d)$, we obtain 
			\begin{align*}
				\vII \in W^{\frac{n}{2}-1,2} \hookrightarrow W^{\frac{n}{2}-2,\left( \frac{n}{\frac{n}{2}-1},2\right)} \hookrightarrow \cdots \hookrightarrow W^{j,\left( \frac{n}{j+1},2\right)} \hookrightarrow\cdots \hookrightarrow L^{\left(n,2 \right)}.
			\end{align*}
		\end{proof}
		
		Let $\vPhi\in \I_{\frac{n}{2}-1,2}(\Sigma^n;\R^d)$. The Riemann tensor of $g_{\vPhi}$ is given in coordinates by the formula \eqref{eq:Gauss_Codazzi}
		\begin{align*}
			\Riem^{g_{\vPhi}}_{ijkl} = \vII_{ik}\cdot \vII_{jl} - \vII_{il}\cdot \vII_{jk} \in W^{\frac{n}{2}-1,\left(\frac{n}{\frac{n}{2}+1},1\right)}\left(\Sigma,g_{\vPhi} \right).
		\end{align*}
		Thus, we obtain $\Riem^{g_{\Phi}}\in L^{\left( \frac{n}{2},1 \right)}(\Sigma^n)$ with the estimate
		\begin{align}\label{eq:Riem_imm_Lorentz}
			\left\| \Riem^{g_{\vPhi}} \right\|_{L^{\left(\frac{n}{2},1\right)}(\Sigma,g_{\vPhi})} \leq 2\, \left\| \vII_{\vPhi} \right\|_{L^{(n,2)}(\Sigma,g_{\vPhi})}^2.
		\end{align}
		The above estimate actually makes sense for $\vPhi\in \I_{0,(n,2)}(\Sigma^n;\R^d)$. If $\|H_{\vPhi}\|_{L^{(n,\infty)}(\Sigma,g_{\vPhi})} \leq \eps_*$, where $\eps_*$ is defined in \Cref{th:Sobolev}, we obtain 
		\begin{align}\label{eq:Riem_imm}
			\left\| \Riem^{g_{\vPhi}} \right\|_{L^{\left(\frac{n}{2},1\right)}(\Sigma,g_{\vPhi})} \leq C(n)\, \sum_{i=0}^{\frac{n}{2}-1} \left( \int_{\Sigma} \big| \g^i \vII_{\vPhi} \big|_{g_{\vPhi}}^{\frac{n}{i+1}}\ d\vol_{g_{\vPhi}}\right)^{2\, \frac{i+1}{n}}.
		\end{align} 
		
		\subsection{Immersions of the unit ball}\label{sec:Local_Imm}
		
		In this section, we prove a local version of the existence of harmonic coordinates for immersions in ${\mathcal I}_{0,(n,2)}(\B^n,{\mathbb R}^d)$ assuming the boundary of 
        $\partial\B$ is sent to the boundary of a ball and realizes an ``almost flat graph'' in this sphere. To do so we are going to use the extension result from the proof of \Cref{th:local}.  We consider the case of immersions $\vPhi\colon \B^n\to \R^d$ such that its restriction $\vPhi\colon \s^{n-1}\to \s^{d-1}$ parametrizes a small perturbation of a slice $\Pc\cap \s^{d-1}$ for some affine $n$-dimensional plane $\Pc\subset \R^d$.

		\begin{theorem}\label{th:local_imm_v2}
			Let $d\geq 1$ and $n\geq 3$ be integers, $K>1$ and $r_0\in(0,1)$. There exists $\eps_4>0$ and $C_4>0$ depending only on $n$, $d$, $K$ and $r_0$ such that the following holds for any $\eps\in(0,\eps_4)$. 
			Let $\Pc\coloneq \vq + \R^n\times \{0\}^{d-n}\subset \R^d$ for some $\vq\in \{0\}^n\times \B^{d-n}$ such that $|\vq| \in [0,r_0)$. Let $\vPhi\in \I_{0,(n,2)}(\B^n;\R^d)$ be an immersion such that 
			\begin{enumerate}
				\item It holds $\|\vII_{\vPhi}\|_{L^{(n,2)}(\B^n,g_{\vPhi})}<\eps$.
				\item The map $\vPhi\colon \s^{n-1}\to \s^{d-1}$ parametrizes a graph $\left\{ \vtheta + \vphi(\vtheta) : \vtheta\in \Pc\cap \s^{d-1} \right\}$, where $\vphi\colon \s^{n-1}\to \R^d$ verifies 
				\begin{align}\label{hyp:graph}
					\eps^{-1}\left( \|\vphi\|_{L^{\infty}(\s^{n-1})} + \|\g \vphi\|_{L^{\infty}(\s^{n-1})}\right) + \|\g^2 \vphi\|_{L^n(\s^{n-1})} \leq K.
				\end{align}

				\item The vector field $\vtau \coloneq \proj_{T\vPhi(\B)}(\vPhi)\big|_{\s^{n-1}}$ verifies
				\begin{align}\label{hyp:tangent}
					\left\|\vtau - \Id_{\Pc\cap \s^{d-1}} \right\|_{L^{\infty}(\s^{n-1})} + \left\|\g \left( \vtau- \Id_{\Pc\cap \s^{d-1}} \right) \right\|_{L^n(\s^{n-1})} \leq K\, \eps.
				\end{align}
			\end{enumerate}
			Then, there exists an immersion $\vPsi\colon \R^n\to \R^d$ such that $\vPsi(x) = \vq + \begin{pmatrix}
				x \\ 0
			\end{pmatrix}$ for $x\in \R^n\setminus \B_2$ with $\vPsi=\vPhi$ in $\B$ and 
			\begin{align*}
				\| \vII_{\vPsi}\|_{L^{(n,2)}(\R^n,g_{\vPsi})} \leq C_4\, \eps^{\frac{1}{n}}.
			\end{align*}
			Moreover, there exists a map $z\in \BiLip(\R^n)$ providing harmonic coordinates for $g_{\vPhi}$ on $\B^n$ such that for all $1\leq i,j\leq n$, it holds
			\begin{align}\label{eq:control_coordinates}
				\left\| \big( g_{\vPhi\circ z} \big)_{ij} - \delta_{ij} \right\|_{W^{2,\left(\frac{n}{2},1\right)}(z^{-1}(\B),\geu)} \leq C_4\, \eps^{\frac{2}{n}}.
			\end{align}
		\end{theorem}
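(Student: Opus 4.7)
I would approach this by reducing to the global harmonic coordinate theory of \Cref{sec:ProofSecond}, via an extension of $\vPhi$ to a weak immersion $\vPsi\colon\R^n\to\R^d$ that coincides with the flat parametrization $x\mapsto\vq+(x,0)$ outside $\B_2$ and with $\vPhi$ on $\B$, smoothly interpolated in the annulus $A\coloneq\B_2\setminus\B$. Once $\vPsi$ is built as an element of $\I_{0,(n,2)}(\R^n;\R^d)$ with controlled second fundamental form, the sought harmonic coordinates for $g_\vPhi$ on $\B$ will be the restriction to $z^{-1}(\B)$ of the global harmonic coordinates for $g_\vPsi$ provided by \Cref{th:Weak_metrics_harm_v2}.

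First, I would construct the extension in polar coordinates $(r,\theta)\in[1,2]\times\s^{n-1}$. Identifying $\Pc\cap\s^{d-1}$ with $\s^{n-1}$ and writing $\vPhi(\theta)=\vq+(\theta,0)+\vphi(\theta)$, set $\vPsi_{\text{flat}}(r\theta)\coloneq\vq+(r\theta,0)$ and $\vPsi=\vPsi_{\text{flat}}+\vec{R}$, where the correction $\vec{R}$ is supported in $\B_2$, matches $\vphi$ and its radial derivative at $r=1$, and vanishes to first order at $r=2$. The $C^1$-matching across $\s^{n-1}$ is crucial: otherwise $\g^2\vPsi$ would contain a singular contribution supported on $\s^{n-1}$, incompatible with $L^{(n,2)}$ membership. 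Hypothesis~\eqref{hyp:tangent} supplies precisely the control on the trace of $d\vPhi$ at the boundary (through the tangential projection being close to the flat one) that makes this matching possible at scale $\eps$.

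The central estimate to prove is $\|\vII_\vPsi\|_{L^{(n,2)}(\R^n,g_\vPsi)}\leq C_4\,\eps^{1/n}$. The contribution from $\B$ is $<\eps$ by hypothesis and the contribution from $\R^n\setminus\B_2$ vanishes. In the annulus, $\g^2\vPsi$ decomposes into two pieces: the tangential Hessian of $\vphi$, bounded in $L^n(A)$ by $CK$ thanks to the $W^{2,n}(\s^{n-1})$ bound in~\eqref{hyp:graph}; and radial and mixed derivatives coming from cut-offs times $\vphi$ and $d\vPhi\big|_{r=1}$, bounded in $L^\infty(A)$ by $CK\eps$ thanks to the $W^{1,\infty}$-smallness in~\eqref{hyp:graph} and~\eqref{hyp:tangent}. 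Interpolating in Lorentz scales between $L^n(A)$ with norm $O(K)$ and the $L^\infty$-smallness of order $\eps$, using the finite measure of $A$ and that the normal bundle of $\vPsi$ is $O(\eps)$-close to the flat normal bundle $\{0\}^n\times\R^{d-n}$, yields $\|\vII_\vPsi\|_{L^{(n,2)}(A,g_\vPsi)}\lesssim\eps^{1/n}$.

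With $\vPsi$ constructed, the Gauss--Codazzi formula~\eqref{eq:Riem_imm_Lorentz} gives
\begin{align*}
\|\Riem^{g_\vPsi}\|_{L^{\left(\frac{n}{2},1\right)}(\R^n,g_\vPsi)}\leq C\,\|\vII_\vPsi\|_{L^{(n,2)}(\R^n,g_\vPsi)}^2\leq C\,\eps^{2/n}.
\end{align*}
The metric $g_\vPsi$ satisfies $\dr g_\vPsi\in L^{(n,2)}(\R^n)$ directly from $\vPsi\in W^{2,(n,2)}(\R^n;\R^d)$, while the second derivative regularity $\dr^2 g_\vPsi\in L^{(n/2,2)}$ required by \Cref{th:Weak_metrics_harm_v2} is obtained after mollification, exactly as in the proof of \Cref{th:Weak_metrics_v2}. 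Applying \Cref{th:Weak_metrics_harm_v2} to $g_\vPsi$ then yields $z\in\BiLip(\R^n)$ providing harmonic coordinates with
\begin{align*}
\|(z^*g_\vPsi)_{ij}-\delta_{ij}\|_{\bar{W}^{2,\left(\frac{n}{2},1\right)}(\R^n)}\leq C\,\eps^{2/n}.
\end{align*}
Restricting to $z^{-1}(\B)$ and using $\vPsi|_\B=\vPhi$ gives the final estimate~\eqref{eq:control_coordinates}.

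The hardest part will be the $L^{(n,2)}$-smallness of $\vII_\vPsi$ in the annulus with the exact exponent $\eps^{1/n}$. The boundary data $\vphi$ has only $L^n$-bound of order $K$ on its Hessian (not small at scale $\eps$), so the $\eps^{1/n}$ gain must be extracted by the Lorentz interpolation between $L^n$-boundedness and $L^\infty$-smallness of the first-order quantities, crucially combined with the proximity of the normal bundle of $\vPsi$ to the flat one. A secondary technicality is the passage to \Cref{th:Weak_metrics_harm_v2}: the regularity $\vPsi\in W^{2,(n,2)}$ alone is not enough to put $g_\vPsi$ in $\bar{W}^{2,(n/2,1)}$, and the mollification scheme from \Cref{sec:ProofSecond} must be invoked to bridge this gap.
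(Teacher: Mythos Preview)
Your global strategy matches the paper exactly: extend $\vPhi$ to $\vPsi\in\I_{0,(n,2)}(\R^n;\R^d)$ agreeing with the flat plane outside $\B_2$, bound $\|\vII_{\vPsi}\|_{L^{(n,2)}}$, invoke Gauss--Codazzi~\eqref{eq:Riem_imm_Lorentz} to bound $\|\Riem^{g_{\vPsi}}\|_{L^{(n/2,1)}}$, apply the global harmonic-coordinate theorem, and restrict to $z^{-1}(\B)$. The Riemann tensor bound, the application of \Cref{th:Weak_metrics_harm}, and the final restriction are all handled correctly.

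The genuine gap is in your derivation of $\|\vII_{\vPsi}\|_{L^{(n,2)}(A)}\lesssim\eps^{1/n}$. With a naive cut-off extension of the form $\chi(r)\vphi(\theta)+\text{radial corrections}$, the tangential block $\dr^2_{\theta\theta}\vec{R}$ contains $\chi(r)\,\dr^2_{\theta\theta}\vphi$, whose $L^n(A)$ norm is only $O(K)$, and its projection onto the approximate normal bundle $\{0\}^n\times\R^{d-n}$ has no reason to be small (the hypotheses place no smallness on the last $d-n$ components of $\dr^2_{\theta\theta}\vphi$). ``Interpolating in Lorentz scales'' between an $L^n$ bound of size $K$ on one piece and an $L^\infty$ bound of size $\eps$ on another piece does not produce an $L^{(n,2)}$ bound of order $\eps^{1/n}$ on their sum; nor does proximity of the normal bundle to the flat one salvage this. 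As written, your annulus estimate yields only $\|\vII_{\vPsi}\|_{L^{(n,2)}(A)}\lesssim K$.

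The paper extracts $\eps^{1/n}$ by a different mechanism (carried out in \Cref{lm:extension}). Rather than a cut-off extension, it uses a bounded right inverse $E$ of the trace operator $W^{2+1/n,n}(A)\to W^{2,n}(\s^{n-1})\times W^{1,n}(\s^{n-1})$, which is simultaneously bounded as $W^{2-1/n,n}\times W^{1-1/n,n}\to W^{2,n}(A)$. The boundary data are $(\vphi,\vtau-\vtheta)$, and the crucial gain is Gagliardo--Nirenberg \emph{on the boundary}:
\[
\|\vphi\|_{W^{2-1/n,n}(\s^{n-1})}\leq C\,\|\vphi\|_{W^{1,n}(\s^{n-1})}^{1/n}\,\|\vphi\|_{W^{2,n}(\s^{n-1})}^{(n-1)/n}+C\,\|\vphi\|_{L^\infty(\s^{n-1})}\leq C(K)\,\eps^{1/n},
\]
with a similar (in fact better) bound for $\|\vtau-\vtheta\|_{W^{1-1/n,n}}$. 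This yields $\|\g^2\vpsi'\|_{L^n(A)}\leq C(K)\,\eps^{1/n}$ for the \emph{entire} Hessian of the extension, from which the $L^{(n,2)}$ bound on $\vII_{\vPsi}$ follows directly. So the $\eps^{1/n}$ is an interpolation, but it happens at the level of fractional Sobolev norms of the boundary data, not at the Lorentz scale in the annulus. The paper also checks that the Gauss maps match continuously across $\s^{n-1}$ (via the choice $\dr_r\vpsi'=\vtau-\vtheta$ on $\s^{n-1}$), which is the precise content of your $C^1$-matching remark.

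Your observation about the regularity gap ($g_{\vPsi}$ only in $W^{1,(n,2)}$ from $\vPsi\in W^{2,(n,2)}$) is well taken; the paper applies \Cref{th:Weak_metrics_harm} without comment on this point, so your instinct to regularize is reasonable, though note that the extension $\vpsi'$ actually lies in $W^{2+1/n,n}(A)$, giving extra regularity outside $\B$.
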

		
		\begin{remark}
			\begin{enumerate}
				\item The affine plane $\Pc$ intersects $\s^{d-1}$ if and only if $|\vq|<1$. The bound $|\vq|<r_0$ amounts to ask that $\Pc\cap \s^{d-1}$ is an $(n-1)$-sphere with radius bounded from below.
				
				\item The assumptions \eqref{hyp:graph} and \eqref{hyp:tangent} imply some regularity on the domain $z^{-1}(\B)$. Indeed, they imply that $\|(g_{\vPhi})_{ij} - \delta_{ij}\|_{L^{\infty}(\s^{n-1})}\leq C(n,K)\, \eps$. Together with \eqref{eq:control_coordinates}, we obtain by proceeding in the same manner as in \eqref{eq:est_harm_coord} that 
				\begin{align*}
					\| \dr_i z\cdot \dr_j z - \delta_{ij} \|_{L^{\infty}(\s^{n-1})} \leq C(n,K)\, \eps^{\frac{2}{n}}.
				\end{align*}
				In other words, the matrix $(\dr_i z^j)_{1\leq i,j\leq n}$ is uniformly close to $O(n)$, hence its distortion is controlled. Since the same property holds for $z^{-1}$, we obtain that the boundary $\dr\left( z^{-1}(\B) \right) $ has controlled Lipschitz constant.
			\end{enumerate}
		\end{remark}
		
		For later purposes in \cite{MarRiv20252} We shall prove in fact a slightly stronger version of the first part of \Cref{th:local_imm_v2} --- that we state as a lemma --- assuming only that $\vPhi$ is defined on an arbitrary $n-$dimensional manifold $\Sigma^n$ such that $\dr\Sigma^n = \s^{n-1}$. Then we apply it to the case $\Sigma=\B^n$ and prove \Cref{th:local_imm_v2}.
		
		\begin{lemma}\label{lm:extension}
			Let $d\geq 1$ and $n\geq 3$ be integers, $K>1$ and $r_0\in(0,1)$. There exists $\eps_4>0$ and $C_4>0$ depending only on $n$, $d$, $K$ and $r_0$ such that the following holds for any $\eps\in(0,\eps_4)$. 
			Let $\Pc\coloneq \vq + \R^n\times \{0\}^{d-n}\subset \R^d$ for some $\vq\in \{0\}^n\times \B^{d-n}$ such that $|\vq| \in [0,r_0)$. Let $\vPhi\in \I_{0,(n,2)}(\Sigma;\R^d)$ be an immersion such that 
			\begin{enumerate}
				\item $\Sigma$ is an orientable compact $n$-dimensional manifold with boundary $\dr\Sigma = \s^{n-1}$.
				
				\item It holds $\|\vII_{\vPhi}\|_{L^{(n,2)}(\Sigma,g_{\vPhi})}<\eps$.
				
				\item The map $\vPhi\colon \s^{n-1}\to \s^{d-1}$ parametrizes a graph $\left\{ \vtheta + \vphi(\vtheta) : \vtheta\in \Pc\cap \s^{d-1} \right\}$, where $\vphi\colon \s^{n-1}\to \R^d$ verifies 
				\begin{align}\label{hyp:graph2}
					\eps^{-1}\left( \|\vphi\|_{L^{\infty}(\s^{n-1})} + \|\g \vphi\|_{L^{\infty}(\s^{n-1})}\right) + \|\g^2 \vphi\|_{L^n(\s^{n-1})} \leq K.
				\end{align}			
				
				\item The vector field $\vtau \coloneq \proj_{T\vPhi(\B)}(\vPhi)\big|_{\s^{n-1}}$ verifies
				\begin{align}\label{hyp:tangent2}
					\left\|\vtau - \Id_{\Pc\cap \s^{d-1}} \right\|_{L^{\infty}(\s^{n-1})} + \left\|\vtau- \Id_{\Pc\cap \s^{d-1}}  \right\|_{W^{1,n}(\s^{n-1})} \leq K\, \eps.
				\end{align}
			\end{enumerate}
			Let $\tilde{\Sigma}$ be the manifolds obtained by gluing $\Sigma$ with $\R^n\setminus \B^n$ along their common boundary $\s^{n-1}$. Then, there exists an immersion $\vPsi\colon \tilde{\Sigma}\to \R^d$ such that $\vPsi(\R^n\setminus \B_2) = (\R^n\setminus \B_2)\times \{0\}^{d-n}$, $\vPsi=\vPhi$ in $\B_1$ and 
			\begin{align}\label{eq:est_extension}
				\| \vII_{\vPsi}\|_{L^{(n,2)}(\tilde{\Sigma},g_{\vPsi})} \leq C_4\, \eps^{\frac{1}{n}}.
			\end{align}
		\end{lemma}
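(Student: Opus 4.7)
I would construct $\vPsi$ explicitly on the annular collar $A := \B_2 \setminus \B_1$ (identified with part of $\tilde{\Sigma}$), realizing it as the sum of a flat reference immersion into the affine plane $\Pc$ and a small graph-like perturbation. Writing $\vq = (0, \vq')$ and $r := \sqrt{1-|\vq'|^2} \in [\sqrt{1-r_0^2},1]$, the graph hypothesis yields the decomposition $\vPhi|_{\s^{n-1}}(\vtheta) = f(\vtheta) + \vphi(f(\vtheta))$ with $f(\vtheta) := (r\vtheta, \vq') \in \Pc \cap \s^{d-1}$ and $\vphi$ valued in $\{0\}^n \times \R^{d-n}$ (the normal space to $\Pc$).

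\paragraph*{Reference map and boundary data.} First, I pick a smooth profile $\rho\colon [1,2]\to [r,2]$ with $\rho(1)=r$, $\rho(2)=2$ and $\rho'(2)=1$, and set $\vPsi_{\text{ref}}(s,\vtheta) := \vq + (\rho(s)\vtheta, 0)$. Since its image lies in the affine $n$-plane $\Pc$, we have $\vII_{\vPsi_{\text{ref}}} \equiv 0$ as a submanifold of $\R^d$. The hypothesis on $\vtau$ asserts that $T\vPhi(\Sigma)$ along $\s^{n-1}_1$ is $O(K\eps)$-close to $\Pc$ in $L^\infty \cap W^{1,n}(\s^{n-1})$, so combined with the graph hypothesis the residual Cauchy data $d\vPhi|_{\s^{n-1}} - d\vPsi_{\text{ref}}|_{s=1}$ has $L^\infty$-norm $\lesssim K\eps$ and $W^{1,n}$-norm $\lesssim K$.

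\paragraph*{Trace extension of the perturbation.} Next, using the trace operator
\[
T\colon W^{2+1/n,n}(A) \longrightarrow W^{2,n}(\s^{n-1}_1\cup\s^{n-1}_2) \times W^{1,n}(\s^{n-1}_1\cup\s^{n-1}_2)
\]
and a bounded right inverse (analogous to the extension operator $E$ used in the proof of \Cref{th:local}), I build the perturbation $V := \vPsi - \vPsi_{\text{ref}}$ in $W^{2+1/n,n}(A)$ with trace $\vphi\circ f$ and prescribed radial derivative ensuring $C^1$ matching with $\vPhi$ on $\s^{n-1}_1$, and trivial Cauchy data on $\s^{n-1}_2$. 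The previous step yields
\[
\|V\|_{L^\infty(A)} \leq C(K)\,\eps, \qquad \|V\|_{W^{2+1/n,n}(A)} \leq C(K).
\]
Gluing $\vPsi_{\text{ref}} + V$ on $A$ with $\vPhi$ on $\Sigma$ and $x \mapsto \vq + (x,0)$ on $\R^n \setminus \B_2$ defines the $C^1$ immersion $\vPsi$ on $\tilde{\Sigma}$.

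\paragraph*{Estimate of $\vII_\vPsi$ and main obstacle.} Since $\vII_{\vPsi_{\text{ref}}} \equiv 0$ and $V$ is valued in the normal space to $\Pc$, the second fundamental form of $\vPsi$ satisfies $|\vII_\vPsi|_{g_\vPsi} \lesssim |\g^2 V| + |\g V|\,|\g^2\vPsi_{\text{ref}}|$ up to harmless higher-order terms absorbable for $\eps$ small. Using the Sobolev chain $W^{2+1/n,n}(A) \hookrightarrow W^{2,q}(A) \hookrightarrow L^{(n,2)}(A)$ for some $q > n$ (valid on the bounded domain $A$), one obtains $\|\g^2 V\|_{L^{(n,2)}(A)} \leq C(K)$; I would then interpolate this bound with the $L^\infty$ smallness $\|V\|_{L^\infty} \leq C(K)\eps$ via a Gagliardo--Nirenberg inequality in Lorentz--Sobolev form to extract $\|\g^2 V\|_{L^{(n,2)}(A)} \leq C(n,d,K,r_0)\,\eps^{1/n}$. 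Combining with $\|\vII_\vPhi\|_{L^{(n,2)}(\Sigma,g_\vPhi)} < \eps$ and the flatness on $\R^n \setminus \B_2$, this yields \eqref{eq:est_extension}. I expect the main obstacle to be extracting the precise exponent $1/n$ in the final interpolation, which requires simultaneously exploiting the $L^\infty$-smallness of the boundary data and the mere $W^{2,n}$-regularity of $\vphi$; the tangent-plane alignment hypothesis on $\vtau$ is essential to ensure the $C^1$ matching at $\s^{n-1}_1$ does not introduce error terms exceeding $\eps^{1/n}$.
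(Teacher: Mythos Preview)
Your overall strategy --- build the extension on the collar $A=\B_2\setminus\B_1$ as a reference flat map plus a perturbation $V$ obtained from a right-inverse of the trace operator, then glue --- is exactly the paper's approach. The gap lies in the interpolation step that extracts the exponent $\eps^{1/n}$.

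First, the claim $\|V\|_{L^\infty(A)}\leq C(K)\,\eps$ is not justified: a generic right-inverse $E$ of the trace operator is bounded between Sobolev scales, but there is no reason it should send $L^\infty$ boundary data to $L^\infty$ extensions with the same size. Second, even granting this $L^\infty$ smallness, the Gagliardo--Nirenberg interpolation you propose does not yield $\|\nabla^2 V\|_{L^{(n,2)}}\lesssim\eps^{1/n}$. Interpolating $\nabla^2 V$ in $L^n$ between $\|V\|_{L^\infty}$ and $\|V\|_{W^{2+1/n,n}}$ requires an interpolation parameter $\theta\geq \frac{2}{2+1/n}=\frac{2n}{2n+1}$, so the best decay you can reach is $\eps^{1-\theta}\leq\eps^{1/(2n+1)}$, which is strictly weaker than the stated $\eps^{1/n}$. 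You correctly anticipate that this is ``the main obstacle'', but you do not resolve it.

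The paper's remedy is to interpolate \emph{on the boundary data, before applying the extension operator}. By Gagliardo--Nirenberg on $\s^{n-1}$,
\[
\|\vphi\|_{W^{2-\frac{1}{n},n}(\s^{n-1})}\leq C\,\|\vphi\|_{W^{1,n}(\s^{n-1})}^{1/n}\,\|\vphi\|_{W^{2,n}(\s^{n-1})}^{(n-1)/n}+C\,\|\vphi\|_{L^\infty(\s^{n-1})}\leq C(K)\,\eps^{1/n},
\]
and similarly $\|\vtau-\vtheta\|_{W^{1-1/n,n}}\leq C(K)\,\eps$. Then one uses that the \emph{same} extension operator $E$ is simultaneously bounded from $W^{2-1/n,n}\times W^{1-1/n,n}$ into $W^{2,n}(A)$, giving $\|\nabla^2 V\|_{L^n(A)}\leq C(K)\,\eps^{1/n}$ directly. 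A separate bulk interpolation between this $L^n$ bound and the $W^{2+1/n,n}$ bound then upgrades to the Lorentz space $L^{(n,1)}\subset L^{(n,2)}$. The point is that the $1/n$ exponent comes from the gap between the two trace scales $W^{2,n}$ and $W^{2-1/n,n}$, which precisely matches the $W^{1,\infty}$-smallness versus $W^{2,n}$-boundedness of $\vphi$ in hypothesis~\eqref{hyp:graph2}.
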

		
		\begin{proof}
			Up to dilation and translation, we can change $\Pc\cap \s^{d-1}$ in $\s^{n-1}\times \{0\}^{d-n}$, since both the energy $\Er_n$ and the $L^{(n,2)}$-norm of the second fundamental form are scale-invariant. \\
			
			\emph{Step 1: Definition of the extension.}\\
			We consider the trace operator:
			\begin{align*}
				\tr \colon \left| \begin{array}{c c c}
					W^{2+\frac{1}{n},n}(\B_2\setminus \B_1) & \to & W^{2,n}(\s^{n-1}_2\cup \s^{n-1}_1)\times W^{1,n}(\s^{n-1}_2\cup \s^{n-1}_1) \\[2mm]
					u & \mapsto & (u,\dr_r u)
				\end{array}
				\right. 
			\end{align*}
			It has a bounded right-inverse (an extension operator, see for instance \cite{lamberti2020}, \cite[Theorem 14.1.8]{agranovich2015}, \cite[Theorem 1.11]{amrouche} or \cite{jonsson1984})
			\begin{align*}
				E\colon W^{2,n}(\s^{n-1}_2\cup \s^{n-1}_1) \times W^{1,n}(\s^{n-1}_2\cup \s^{n-1}_1)  \to W^{2+\frac{1}{n},n}(\B_2\setminus \B_1) .
			\end{align*} 
			It also defines a bounded operator
			\begin{align*}
				E\colon W^{2-\frac{1}{n},n}(\s^{n-1}_2\cup \s^{n-1}_1) \times W^{1-\frac{1}{n},n}(\s^{n-1}_2\cup \s^{n-1}_1)  \to W^{2,n}(\B_2\setminus \B_1) .
			\end{align*} 
			Hence there exists a map $\vpsi'\in W^{2+\frac{1}{n},n}(\B_2\setminus \B_1;\R^d)$ such that
			\begin{align}\label{eq:def_vpsip}
				\begin{cases} 
					\vpsi' = \vphi &  \text{ on }\s^{n-1}_1,\\[3mm]
					\displaystyle \dr_r \vpsi' = \vtau - \vtheta &  \text{ on }\s^{n-1}_1, \\[3mm]
					\vpsi' = 0 & \text{ on }\s^{n-1}_2, \\[3mm]
					\dr_r \vpsi' = 0 & \text{ on }\s^{n-1}_2.
				\end{cases} 
			\end{align}
			Moreover, we have the following estimates
			\begin{equation}\label{eq:est_vpsip}
				\left\{
				\begin{aligned}
					& \|\vpsi'\|_{W^{2,n}(\B_2\setminus \B_1)} \leq C\, \|\vphi\|_{W^{2-\frac{1}{n},n}(\s^{n-1}_1)} + C\, \left\| \vtau - \vtheta \right\|_{W^{1-\frac{1}{n},n}(\s^{n-1}_1)}, \\[3mm]
					& \|\vpsi'\|_{W^{2+\frac{1}{n},n}(\B_2\setminus \B_1)}  \leq C \, \|\vphi\|_{W^{2,n}(\s^{n-1}_1)} + C\, \left\| \vtau - \vtheta \right\|_{W^{1,n}(\s^{n-1}_1)} .
				\end{aligned} 
				\right.
			\end{equation}
			By Gagliardo--Nirenberg interpolation inequality, we obtain:
			\begin{align*}
				\|\vphi\|_{W^{2-\frac{1}{n},n}(\s^{n-1})}  \leq C(n)\, \|\vphi\|_{W^{1,n}(\s^{n-1})}^{\frac{1}{n}}\, \|\vphi\|_{W^{2,n}(\s^{n-1})}^{\frac{n-1}{n}} + C(n)\, \|\vphi\|_{L^{\infty}(\s^{n-1})}  \leq C(n,K)\, \eps^{\frac{1}{n}}.
			\end{align*}
			In a similar manner, we have 
			\begin{align*}
				\left\|\vtau - \vtheta \right\|_{W^{1-\frac{1}{n},n}(\s^{n-1}_1)} \leq C(n,K)\, \eps.
			\end{align*}
			Using the Sobolev embedding $W^{2+\frac{1}{n},n}(\B^n_2\setminus \B^n_1)\hookrightarrow C^{1,\frac{1}{2n}}(\B^n_2\setminus \B^n_1)$ and $W^{2,n}(\B^n_2\setminus \B^n_1)\hookrightarrow L^{\infty}(\B^n_2\setminus \B^n_1)$, we obtain 
			\begin{align}\label{eq:est_w}
				\begin{cases} 
					\|\vpsi'\|_{L^{\infty}(\B^n_2\setminus \B^n_1)} + \|\g \vpsi'\|_{L^n(\B^n_2\setminus \B^n_1)} + \|\g^2 \vpsi'\|_{L^n(\B^n_2\setminus \B^n_1)} \leq C(n,K)\, \eps^{\frac{1}{n}},\\[3mm]
					\|\vpsi'\|_{C^{1,\frac{1}{2n}}(\B^n_2\setminus \B^n_1)} + \|\vpsi'\|_{W^{2+\frac{1}{n},n}(\B^n_2\setminus \B^n_1)}  \leq C(n,K).
				\end{cases} 
			\end{align}
			We consider the following map given for $x\in \R^n\setminus \B_1$:
			\begin{align}\label{eq:def_vpsi}
				\vpsi(x) \coloneqq \begin{pmatrix} 
					x \\ 0 
				\end{pmatrix}
				+ \vpsi'(x).
			\end{align}
			By \eqref{eq:est_w}-\eqref{eq:def_vpsi}, there exists $\eps_1>0$ and $r_0>1$ depending only on $n$, $d$ and $V$ such that if $\eps<\eps_1$, then $\vpsi$ can be described on $\B_{r_0}\setminus \B_1$ as a graph of a map $\vw\colon \B_{r_0}\setminus \B_1\to \R^{d-n}$ such that (thanks to \eqref{eq:est_w}):
			\begin{align}\label{eq:est_w2}
				\begin{cases} 
					\|\vw\|_{L^{\infty}(\B_{r_0}\setminus \B_1)} + \|\g \vw\|_{L^n(\B_{r_0}\setminus \B_1)} + \|\g^2 \vw\|_{L^n(\B_{r_0}\setminus \B_1)} \leq C(n,K)\, \eps^{\frac{1}{n}},\\[3mm]
					\|\vw\|_{C^{1,\frac{1}{2n}}(\B_{r_0}\setminus \B_1)} + \|\vw\|_{W^{2+\frac{1}{n},n}(\B_{r_0}\setminus \B_1)}\leq C(n,K).
				\end{cases} 
			\end{align}
			Using the Sobolev injection $W^{\frac{1}{n},n}(\B_{r_0}\setminus \B_1)\hookrightarrow L^{\frac{n^2}{n-1}}(\B_{r_0}\setminus \B_1)$, we obtain
			\begin{align*}
				\|\g^2 \vw\|_{L^{\frac{n^2}{n-1}} (\B_{r_0}\setminus \B_1) } \leq C(n)\, \|\g^2 \vw\|_{W^{\frac{1}{n},n}(\B_{r_0}\setminus \B_1)}\leq C(n,K).
			\end{align*}
			We also have $\frac{n^2}{n-1}>\frac{n^2}{n-\frac{1}{2}}>n$ with the equality
			\begin{align*}
				\frac{n-\frac{1}{2}}{n^2} = \frac{1}{2}\cdot \frac{1}{n} + \frac{1}{2}\cdot \frac{n-1}{n^2}.
			\end{align*}
			We obtain the following estimate
			\begin{align}
				\|\g^2\vw\|_{L^{(n,1)} (\B_{r_0}\setminus \B_1) } & \leq C(n) \|\g^2 \vw\|_{L^{\frac{n^2}{n-\frac{1}{2}}} (\B_{r_0}\setminus \B_1) }  \nonumber  \\[3mm]
				& \leq C(n) \|\g^2 \vw\|_{L^{\frac{n^2}{n-1}} (\B_{r_0}\setminus \B_1) }^{\frac{1}{2}}\, \|\g^2 \vw\|_{L^n (\B_{r_0}\setminus \B_1) }^{\frac{1}{2}}  \nonumber  \\[1mm]
				& \leq C(n,K)\, \eps^{\frac{1}{2}}. \label{eq:Lorentz}
			\end{align}
			We consider a cut-off function $\chi\in C^{\infty}([1,+\infty);[0,1])$ such that 
			\begin{align}\label{eq:def_cutoff}
				\begin{cases}
					\chi = 1 & \text{in } \left[1,\frac{1+r_0}{2} \right],\\[2mm]
					\chi = 0 & \text{in } [2,+\infty),\\[2mm]
					|\g \chi| + |\g^2 \chi| \leq C & \text{in } \left[ \frac{1+r_0}{2} ,2 \right].
				\end{cases}
			\end{align}
			We paste $\vpsi$ with a flat $n$-plane. We define the immersion
			\begin{align}\label{eq:def_vPsi}
				\forall x\in \R^n\setminus \B_1,\qquad \vPsi(x) \coloneqq  \big(1-\chi(|x|)\big)\, \begin{pmatrix}
					x \\ 0
				\end{pmatrix} + \chi(|x|)\, \vpsi(x).
			\end{align}
			
			\emph{Step 2: Second fundamental form of $\vpsi$ and $\vPsi$.}\\
			In order to show that $\vPsi$ is a weak immersion, we need to check that its second fundamental is well-defined in $L^n$ across the junction. By \eqref{eq:Lorentz} and \cite[Lemma 2.2]{breuning2015}, we have that $\vPsi$ has a second fundamental form in $L^{(n,1)}(\R^n\setminus \B_1)$. Thanks to \eqref{eq:est_w}-\eqref{eq:def_vpsi}, we obtain that in radial coordinates $(r,\theta)\in[1,+\infty)\times\s^{n-1}$, the maps $\vpsi(r,\cdot)$ converge strongly to $\vPhi$ as $r\to 1$ in the $C^{1,\frac{1}{2n}}(\s^{n-1})$-topology. We obtain that 
			\begin{align}\label{eq:C1_extension}
				\vPsi(r,\cdot) \xrightarrow[r\to 1]{C^{1,\frac{1}{2n}}} \vPhi.
			\end{align}
			We now show that the Gauss maps also coincides, this will imply that the second fundamental form of $\vPsi$ is indeed an element of $L^n$ across the junction.
			\begin{claim}\label{cl:conv_normal}
				It holds
				\begin{align*}
					\vn_{\vPsi}(r,\cdot) \xrightarrow[r\to 1]{C^{0,\frac{1}{2n}}} \vn_{\vPhi}\Big|_{\s^{n-1}}.
				\end{align*}
			\end{claim}
			\begin{proof} 
				By \eqref{eq:def_vpsi} and \eqref{eq:def_vpsip}, we have the following convergence:
				\begin{align*}
					\vn_{\vpsi}(r,\cdot) = *_{\R^d} \frac{\dr_r\vpsi \wedge \dr_{\theta_1}\vpsi \wedge \cdots \wedge \dr_{\theta_{n-1}} \vpsi }{ \left| \dr_r \vpsi \wedge \dr_{\theta_1} \vpsi \wedge \cdots \wedge \dr_{\theta_{n-1}} \vpsi \right|} \xrightarrow[r\to 1]{C^{0,\frac{1}{2n}}} *_{\R^d} \frac{\vtau \wedge \dr_{\theta_1} \vPhi\wedge \cdots \wedge \dr_{\theta_{n-1}} \vPhi }{ \left| \vtau \wedge \dr_{\theta_1} \vPhi \wedge \cdots \wedge \dr_{\theta_{n-1}} \vPhi \right| }.
				\end{align*}
				The tangent space of $\vPhi_{|\s^{n-1}}$ is spanned by $\dr_{\theta_1}\vPhi,\ldots,\dr_{\theta_{n-1}} \vPhi$. To obtain the complete tangent space of $\vPhi(\Sigma)$, one needs to add the direction 
				\begin{align*} 
					\vtau \coloneq \vPhi_*\left(\g^{g_{\vPhi}} |\vPhi|\right) = \proj_{T\vPhi(\Sigma)}\left(\frac{\vPhi}{|\vPhi|}\right) = \proj_{T\vPhi(\Sigma)}\left( \vPhi \right) .
				\end{align*} 
				Indeed, we have $\vtau\in T\vPhi(\Sigma)$ and for any $i\in\{1,\ldots,n\}$,
				\begin{align*}
					\vtau\cdot \dr_{\theta_i} \vPhi = \vPhi\cdot \dr_{\theta_i}\vPhi = \dr_{\theta_i}\left(\frac{|\vPhi|^2}{2}\right) =0.
				\end{align*}
				Thus, the vector field $\vtau$ is a tangent to $\vPhi(\Sigma)$ not being one of the $\dr_{\theta_i}\vPhi$. Hence, the limit in the above expression is exactly $\vn_{\vPhi}$. Since $\vn_{\vpsi} = \vn_{\vPsi}$ for $r<\frac{1+r_0}{2}$, we obtain
				\begin{align*}
					\vn_{\vPsi}(r,\cdot) \xrightarrow[r\to 1]{C^{0,\frac{1}{2n}}} \vn_{\vPhi}\Big|_{\s^{n-1}}.
				\end{align*}
			\end{proof} 
			
			By \eqref{eq:C1_extension} and Claim \ref{cl:conv_normal}, we can can extend $\vPhi$ by $\vPsi$ on $\R^n\setminus \B_1$ as a weak immersion with second fundamental form in $L^n$. Moreover, $\vPsi$ is the graph of the function $\vu$ (by definition) given by
			\begin{align}\label{eq:def_ust}
				\vu = \chi\, \vw.
			\end{align}
			We have
			\begin{align*}
				\big|\vII_{\vPsi} \big|_{g_{\vPsi}} & \leq C\, |\vw|\, |\g^2\, \chi|_{g_{eucl}} + C\, \chi\, |\g^2 \vw|_{g_{eucl}} + C\, |\g \chi|_{g_{eucl}}\, |\g \vw|_{g_{eucl}} \\[2mm]
				& \leq C\left( \mathbf{1}_{\R^n\setminus \B_1} |\vw|  + \mathbf{1}_{\R^n\setminus \B_1} |\g \vw|_{g_{eucl}} + \chi\, |\g^2 \vw|_{g_{eucl}} \right).
			\end{align*}
			By \eqref{eq:est_w2} and \eqref{eq:Lorentz}, we obtain
			\begin{align*}
				\|\vII_{\vPsi}\|_{L^{(n,2)}(\R^n\setminus \B_1)} \leq C\, \eps^{\frac{1}{n}}.
			\end{align*}
			In $\Sigma$, we have $\vPsi=\vPhi$ and $\vII_{\vPhi} \in L^{(n,2)}(\Sigma)$.
		\end{proof}

        \begin{remark}
            Instead of extending the immersion $\vPhi\colon \B^n\to \R^d$ to $\R^n\setminus \B^n$, one could also extend an immersion $\vPhi\colon \R^n\setminus \B^n\to\R^d$ to $\B^n$ with minor modifications. We consider $\vPhi\in \I_{0,(n,2)}(\Sigma;\R^d)$ satisfying the assumptions 2 to 4 in Lemma \ref{lm:extension}, but $\Sigma$ is now a non-compact manifold with $\dr\Sigma=\s^{n-1}$. We then modify the definition of $\vpsi'$ to be a map $\vpsi"\colon \B^n_1\setminus \B^n_{1/2}\to \R^d$ such that
            \begin{align*}
				\begin{cases} 
					\vpsi" = \vphi &  \text{ on }\s^{n-1}_1,\\[3mm]
					\displaystyle \dr_r \vpsi" = \vtau - \vtheta &  \text{ on }\s^{n-1}_1, \\[3mm]
					\vpsi" = 0 & \text{ on }\s^{n-1}_{1/2}, \\[3mm]
					\dr_r \vpsi" = 0 & \text{ on }\s^{n-1}_{1/2}.
				\end{cases} 
			\end{align*}
            Defining $\vpsi"$ as in the above proof (using a right-inverse of the trace operator), we recover estimates similar to \eqref{eq:est_w} in $\B^n_1\setminus \B^n_{1/2}$ instead of $\B^n_2\setminus \B^n_1$. Plugging these minor changes in the rest of the proof, we obtain a weak immersion $\vPsi\in \I_{0,(n,2)}(\hat{\Sigma};\R^d)$ still verifying \eqref{eq:est_extension} with $\hat{\Sigma}$ being the manifold obtained by gluing $\Sigma$ and $\B^n$ along their common boundary $\s^{n-1}$.
        \end{remark}
		
		We can now conclude the proof of \Cref{th:local_imm_v2}.
		
		\begin{proof}[Proof of \Cref{th:local_imm_v2}]
			We obtain the extension $\vPsi\colon \R^n\to \R^d$ by direct application of Lemma \ref{lm:extension}. Combining now \Cref{th:Sobolev} and \eqref{eq:Riem_imm_Lorentz}, we obtain 
			\begin{align*}
				\left\| \Riem^{g_{\vPhi}} \right\|_{L^{\left(\frac{n}{2},1\right)}(\Sigma,g_{\vPhi})} \leq C(n,d,K)\, \eps^{\frac{2}{n}}.
			\end{align*}
			For $\eps<\eps_4=\eps_4(n,d,K)$ small enough, we can apply \Cref{th:Weak_metrics_harm}. There exists $z\in \Diff(\R^n)$ such that for all $1\leq i,j\leq n$, it holds
			\begin{align*}
				\left\| \big( g_{\vPsi\circ z} \big)_{ij} - \delta_{ij} \right\|_{\bar{W}^{2,\left(\frac{n}{2},1\right)}(\R^n,\geu)} \leq C(n,d,K)\, \eps^{\frac{2}{n}}.
			\end{align*}
			We obtain \Cref{th:local_imm_v2} by restricting the domain to $z^{-1}(\B)$, where we have $\vPsi\circ z = \vPhi\circ z$:
			\begin{align*}
				\left\| \big( g_{\vPhi\circ z} \big)_{ij} - \delta_{ij} \right\|_{\bar{W}^{2,\left(\frac{n}{2},1\right)}(z^{-1}(\B),\geu)} \leq C(n,d,K)\, \eps^{\frac{2}{n}}.
			\end{align*}
		\end{proof}
		
	\bibliography{sobolovBib.bib}
	\bibliographystyle{plain}
\end{document}